\documentclass[11pt,a4paper]{article}
\usepackage{graphicx} % Required for inserting images
\usepackage{amsfonts,amsmath,amssymb,amsthm,mathtools}

\usepackage[T1]{fontenc}
\usepackage[english]{babel}
\usepackage[utf8]{inputenc}

\usepackage{geometry}
\usepackage[hyphens]{url}
\usepackage[linktoc=all,hidelinks,colorlinks,unicode=true]{hyperref} % Must be loaded after url
\hypersetup{linkcolor={blue!70!black},citecolor={green!70!black},urlcolor={purple!70!black}}

\usepackage{thm-restate}

\usepackage{tikz}
\usetikzlibrary{calc}
\usetikzlibrary{positioning}
\usetikzlibrary{nfold}
\makeatletter
\tikzset{
  side by side/.style args={#1:#2}{
    postaction={path only,draw=#1,offset=+.5\pgflinewidth},
    postaction={path only,draw=#2,offset=+-.5\pgflinewidth}},
  side by side'/.style={path only,side by side={#1}},
  offset/.code=
    \tikz@addoption{%
      \pgfgetpath\tikz@temp
      \pgfsetpath\pgfutil@empty
      \pgfoffsetpath\tikz@temp{#1}}}
\makeatother

\tikzset{
    dashone/.style={dash pattern=on 7pt off 7pt},
  }
\usepackage[dvipsnames]{xcolor} % Colours with names -- see https://www.overleaf.com/learn/latex/Using_colours_in_LaTeX for list

\usepackage{subcaption}

\usepackage[textsize=scriptsize]{todonotes}
\newtheorem{theorem}{Theorem}[section]
\newtheorem{lemma}[theorem]{Lemma}
\newtheorem{claim}[theorem]{Claim}
\newtheorem*{claim*}{Claim}
\newtheorem{conjecture}{Conjecture}[section]
\newtheorem{corollary}[theorem]{Corollary}

\theoremstyle{definition}

\newtheorem{problem}[conjecture]{Problem}
\newenvironment{poc}{\begin{proof}[Proof of
    Claim]}{\end{proof}}

\newcommand{\eps}{\varepsilon}

\newcommand{\norm}[1]{\left\lVert#1\right\rVert}
\DeclareMathOperator{\cp}{cp}
\DeclareMathOperator{\Prob}{Prob}
\DeclareMathOperator{\perm}{Perm}
\DeclareMathOperator{\mix}{mix}

\newcommand{\colora}{Goldenrod}
\newcommand{\colorb}{SkyBlue}

\newcommand{\colord}{orange}

\definecolor{color1}{rgb}{.8,.1,.1}
\definecolor{color2}{rgb}{.1,.8,.1}
\definecolor{color3}{rgb}{.1,.1,.8}

\newcommand{\defin}[1]{\emph{\textcolor{ForestGreen}{#1}}}

\title{Dirac's theorem and the switch geometry of perfect matchings}
\author{Ross J. Kang\footnotemark[1] \and Clément Legrand-Duchesne\footnotemark[2]}
\date{\today}

\begin{document}
\maketitle
\renewcommand{\thefootnote}{\fnsymbol{footnote}} % Make affiliation marks symbols

\footnotetext[1]{Korteweg--de Vries Institute for Mathematics, University of Amsterdam, Netherlands (\textsf{\href{mailto:r.kang@uva.nl}{r.kang@uva.nl}}).}
\footnotetext[2]{Theoretical Computer Science Department, Faculty of Mathematics and Computer Science, Jagiellonian University, Kraków, Poland
(\textsf{\href{mailto:clement.legrand-duchesne@uj.edu.pl}{clement.legrand-duchesne@uj.edu.pl}}).}
\renewcommand{\thefootnote}{\arabic{footnote}} % Return to normal footnote symbols

\begin{abstract}
Let $G$ be a graph on an even number $n$ of vertices and let ${\cal M}_G$ be the
collection of perfect matchings in $G$. Dirac's theorem says that if the minimum
degree $\delta(G)$ of $G$ is at least $n/2$, then ${\cal M}_G$ is guaranteed to
be non-empty, while this is not necessarily the case if $\delta(G) \le n/2-1$.

Given an integer $k\ge 2$, let $\mathcal H_k(G)$ be the reconfiguration graph
formed on ${\cal M}_G$ by connecting two distinct $M_1,M_2\in {\cal M}_G$ by an
edge in $\mathcal H_k(G)$ if $M_1$ can be obtained from $M_2$ by switching at most $k$ edges.
Besides non-emptiness, as per Dirac's theorem, what  other natural properties of $\mathcal H_k(G)$ are guaranteed based on the minimum degree $\delta(G)$ of $G$?

We show that if $\delta(G) \ge \lfloor2n/3\rfloor+1$, then $\mathcal H_2(G)$ must be connected and an
expander, while for each $\delta\le \lfloor(2n-2)/3\rfloor$ there are $n$-vertex graphs
$G$ with minimum degree $\delta$ such that $\mathcal H_2(G)$ is disconnected. We also show that, if $\delta(G)
\ge n/2+2$, then $\mathcal H_3(G)$ must be connected and an expander, while for each
$\delta\le n/2-C_k$ there are $n$-vertex graphs $G$ with minimum degree $\delta$ such that $\mathcal H_k(G)$ is
disconnected, for some $C_k$ depending on $k\ge 3$.  Furthermore, for every $\eps >0$, there exists a $c>1$ such that for every
$k\ge 2$ and every large enough $n$, there are $n$-vertex graphs $G$ with $\delta(G) \ge \frac{n}2-\eps kn$ such that $\mathcal H_k(G)$
has at least $c^n$ components.

With respect to guaranteeing that $\mathcal H_k(G)$ has positive minimum degree (or, equivalently, no isolated vertices) we show that if $\delta(G) \ge
n/2+1$, then $\mathcal H_2(G)$ must have positive minimum degree. For $k\ge 3$, we show how this threshold for $\delta(G)$ is related to the notorious Caccetta-Häggkvist conjecture.

Additionally, we show analogous results for $G$ a balanced bipartite graph on $2n$
vertices.
Moreover, most of our results extend naturally to matchings of a prescribed size (instead of perfect matchings), as well as to Ore-type conditions (instead of minimum degree conditions).
\end{abstract}

%Keywords: Dirac's theorem, perfect matching, combinatorial reconfiguration, phase transition, Caccetta-Häggkvist conjecture
%MSC2020 codes: 05C70, 05D15, 05C35, 68R10, 68W20

%\tableofcontents

\section{Introduction}

Let $G$ be a graph on an even number $n$ of vertices.  We are interested in the
macroscopic changes in the space of perfect matchings of $G$
according to a natural metric, as we vary the minimum degree $\delta(G)$ of
$G$.

It is already a fundamental result in graph theory that the space of perfect
matchings undergoes some sort of phase transition at $\delta(G)$ around $n/2$.
Dirac's theorem~\cite{Dir52} states that if $\delta(G) \ge n/2$, then $G$ must
always contain a perfect matching, while if $\delta(G) < n/2$, then there are examples
of $n$-vertex $G$ with no perfect matching.  Something striking suddenly occurs at this juncture. Letting $\Phi(G)$ denote the {\em number} of perfect matchings of
$G$, Cuckler and Kahn~\cite{CuKa09a,CuKa09b} (see also the
results of Sárközy, Selkow, and Szemerédi~\cite{SSS03} and
Br\`egman~\cite{Bre73}) showed that provided $\delta(G) \ge n/2$,
\begin{equation}\label{eqn:delta} \Phi(G) \ge
\left(\frac{\delta(G)}{e+o_{n\to\infty}(1)}\right)^{n/2}.
\end{equation}
Given that there are all of a sudden so many perfect matchings, right at the non-emptiness
threshold of $\delta(G) = n/2$, a  crude ``volume'' heuristic suggests they might ``cluster''.

To test this hypothesis, we consider the collection ${\cal M}_G$ of
perfect matchings in $G$ as a state space. We define transitions between states
according to their proximity in some metric. A natural metric on ${\cal M}_G$ is
the Hamming distance, that is, the size of the symmetric difference. Due to the
definition of perfect matchings, this distance parameter can only take values in
even integers at least $4$. Two perfect matchings $M_1, M_2 \in {\cal M}_G$ have
Hamming distance at most $2k$, $k\ge 2$, if and only if there is some {\em
  $k$-switch} between $M_1$ and $M_2$ ---that is, the addition of $j$ edges and
the removal of $j$ other edges that takes $M_1$ to $M_2$ for some $j \le k$. Let
$\mathcal H_k(G)$ be the state transition graph  on %(also called the reconfiguration graph) on
${\cal M}_G$ defined by joining two distinct $M_1,M_2 \in {\cal M}_G$ if $M_1$
and $M_2$ have Hamming distance at most $2k$. We call $\mathcal H_k(G)$ the {\em
  $k$-switch (reconfiguration) graph} on ${\cal M}_G$.

Concretely, we cast the ``clustering'' question above as follows. What is the
smallest condition on the minimum degree $\delta(G)$ of $G$ in terms of $n$ that guarantees
the connectedness of the $k$-switch graph $\mathcal H_k(G)$? Or a component in $\mathcal H_k(G)$ of size a constant proportion of $\Phi(G)$? Or positive minimum degree in $\mathcal H_k(G)$?  Observe that if the minimum degree of
$G$ is $n-1$, then $G$ must be the complete graph $K_n$ and it is then easy to
see that $\mathcal H_k(G)$ is connected.  Observe also that if $G$ is the disjoint union
of $n/(2k+2)$ cycles of length $2k+2$ (for $n$ a multiple of $2k+2$), then
$\mathcal H_k(G)$ consists of $2^{n/(2k+2)}$ isolated vertices.  Thus, by monotonicity, the above questions
are all well-defined and non-trivial.

Could we obtain some ``phase diagram'' for $\mathcal H_k(G)$ (see
Figure~\ref{fig:phasediagramcartoon}) similar to those observed in 
various random discrete systems (see for example~\cite{Krzakala2007Gibbs,AcCo08})?
In particular, does something indeed special happen
around the non-emptiness threshold at $\delta(G) = n/2$?  Our main objective in
this work is to sketch the first outlines for such a diagram. Our work follows in the
line of other recent studies in ``emergent
combinatorics''~\cite{feghali2016Brooks,BBP21,bousquet2025Colorings,BHK+,BKO25,GaKa25,CCCHK25+}
where analogous phenomena are explored in other combinatorial contexts.

\begin{figure}
  \centering
  \begin{tikzpicture}[scale=.9]
    \foreach \i in {1,...,6}{
      \node (P\i) at (-2*\i,0) {\includegraphics[width=1.5cm]{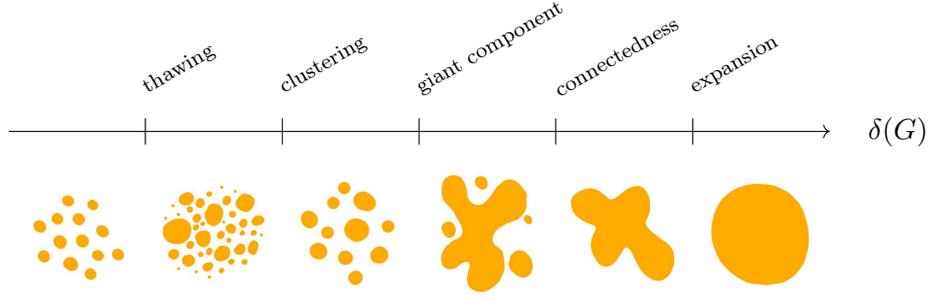}};
    }
    \foreach \i in {1,...,5}{
      \node (t\i) at (-2*\i-1,1.5) {\small $|$};
    }
    \node[above=-1pt of t1, rotate=30,anchor = south west] {\scriptsize expansion};
    \node[above=-1pt of t2, rotate=30,anchor = south west] {\scriptsize connectedness};
    \node[above=-1pt of t3, rotate=30,anchor = south west] {\scriptsize giant component};
    \node[above=-1pt of t4, rotate=30,anchor = south west] {\scriptsize clustering};
    \node[above=-1pt of t5, rotate=30,anchor = south west] {\scriptsize thawing};

    \draw (-13,1.5) edge[->] (-1,1.5);
    \node at (0,1.5) {$\delta(G)$};
  \end{tikzpicture}
  \caption{An artist's depiction of a possible phase diagram for the guaranteed
structure of $\mathcal H_k(G)$ as we vary the condition on $\delta(G)$ as a function of
$n=|G|$. \label{fig:phasediagramcartoon}}
\end{figure}

Note that $\mathcal H_j(G) \subseteq \mathcal H_k(G)$ if $j\le k$. Keeping this monotonicity in mind, we will soon see how $k=3$ is mostly representative for any larger
$k$. We discuss the finer subtleties later in the article, in addition to the
results we have obtained for mixing time, and for matchings of some prescribed
size $\gamma n/2$, where $0 \le \gamma \le 1$. Here are some of our main results.

\begin{theorem}\label{thm:k=2} Let $G$ be a graph on an
  even number $n$ of vertices.  If $G$ has minimum degree at most $n/2-2$, then
  the $2$-switch graph $\mathcal H_2(G)$ may have isolated vertices. If $G$ has
  minimum degree at least $n/2+1$, then $\mathcal H_2(G)$ has positive minimum
  degree.\\
  If $G$ has minimum degree at most $\lfloor (2n-2)/3\rfloor$, then $\mathcal H_2(G)$ may be disconnected. If $G$ has minimum degree at least $\lfloor 2n/3\rfloor+1$, then $\mathcal H_2(G)$ is connected.
\end{theorem}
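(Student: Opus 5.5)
The positive-degree threshold reduces to a counting argument. A perfect matching $M$ is isolated in $\mathcal H_2(G)$ exactly when $G$ has no $M$-alternating $4$-cycle. Such a cycle exists iff for some $ab,cd\in M$ either both $ac,bd\in E(G)$ or both $ad,bc\in E(G)$.

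\textbf{Upper bound $n/2+1$ for positive minimum degree.} Fix $ab\in M$ and suppose no alternating $4$-cycle uses $ab$. For every other $cd\in M$, at most two of the four edges $ac,ad,bc,bd$ are present. Any three of them contain one of the two forbidden pairs. Summing over the $n/2-1$ other matching edges gives $\deg(a)+\deg(b)-2\le 2(n/2-1)$, so $\deg(a)+\deg(b)\le n$. This contradicts $\delta(G)\ge n/2+1$. The argument uses only the Ore-type sum, so it also yields the Ore version.

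\textbf{Construction at $\delta=n/2-2$.} The counting shows that an isolated matching needs exactly two cross edges between almost every pair of matching edges. These two edges must be a ``fan'' (a path $a$–$c$–$b$ type, or $c$–$a$–$d$ type), never a perfect matching of the $4$ vertices. I would index $M=\{x_iy_i: i\in\mathbb Z_m\}$ with $m=n/2$ and use a circulant rule. For the pair $(i,i+d)$, choose the fan centre among $x_i,y_i,x_{i+d},y_{i+d}$ according to $d$ (for instance the parity of $d$ and whether $d\le m/2$). This balances degrees so every vertex gets about $m-1$ cross edges, losing at most $2$ to rounding. Then $M$ is an isolated vertex of $\mathcal H_2(G)$ while $\delta(G)\ge n/2-2$. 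The case analysis on $m \bmod 4$ is routine bookkeeping.

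\textbf{Connectivity for $\delta\ge\lfloor 2n/3\rfloor+1$.} Given $M_1\ne M_2$, I would show by induction on $|M_1\triangle M_2|$ that some path of $2$-switches from $M_1$ strictly increases $|M_1\cap M_2|$. Pick $uv\in M_2\setminus M_1$ with $uu',vv'\in M_1$. If $u'v'\in E(G)$, a single $2$-switch inserts $uv$. Otherwise I use a third edge $xy\in M_1$ and aim for $u'x,v'y\in E(G)$. Then two $2$-switches $\{uu',xy\}\to\{ux?\}$-type, more precisely $\{vv',xy\}\to\{v'y,vx\}$ followed by $\{uu',vx\}\to\{uv,u'x\}$, insert $uv$, provided the needed edges exist. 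The condition $\delta>2n/3$ guarantees that any three vertices have a common neighbourhood of size $>0$. It should also show that among the $n/2-2$ remaining matching edges there is an $xy$ with the right pattern of adjacencies to $u',v'$ and to $v$. The existence is proved by the same per-matching-edge counting as above, now with three vertices whose degree sum exceeds $2n$. I must make sure the edges used are not in $M_1\cap M_2$ in a way that destroys progress; otherwise I choose $xy$ among edges of $M_1\setminus M_2$ or accept a temporary loss compensated later. This step is the main obstacle, since the pigeonhole has essentially no slack at $\lfloor 2n/3\rfloor+1$.

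\textbf{Disconnection at $\delta=\lfloor(2n-2)/3\rfloor$.} I would build an almost complete tripartite graph on parts $A,B,C$ of sizes near $n/3$, chosen with suitable parities, plus a sparse structured set of edges inside parts. The aim is an invariant, say the number of matching edges inside a given part modulo $2$ or a vector of such counts, that every $2$-switch preserves. A $2$-switch touches only $4$ vertices in at most $4$ parts, and the missing edges are arranged so that any switch changing the invariant would need a non-edge. Exhibiting two perfect matchings with different invariant values then shows $\mathcal H_2(G)$ is disconnected. The sizes are tuned so that $\delta(G)=\lfloor(2n-2)/3\rfloor$.
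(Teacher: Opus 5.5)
Your isolated-vertex arguments are fine and match the paper. ``At most two cross edges to each other matching edge, hence $\deg(a)+\deg(b)\le n$'' is the paper's pigeonhole in another form, and your fan/circulant construction is the graph $G_n$ built from the circulant oriented graph $H_{n/2}$.

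\textbf{Connectivity.} The argument for $\delta\ge\lfloor 2n/3\rfloor+1$ has a genuine gap, and the real obstacle is progress, not slack in the pigeonhole. Your counting does yield a helper edge. The vertices $u',v',v$ send at least $3\delta-7>4(n/2-2)$ edges outside $\{u,u',v,v'\}$, so some $xy\in M_1$ receives at least five of the six possible edges. Then one of the complementary triples $\{u'x,v'y,vx\}$, $\{u'y,v'x,vy\}$ is complete. But this pigeonhole needs all of $M_1$. It cannot be confined to $M_1\setminus M_2$, which (e.g.\ when $M_1\Delta M_2$ is a single $6$-cycle) has only one edge besides $uu',vv'$.

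If $xy\in M_1\cap M_2$, your two switches merely trade $xy$ for $uv$. The segment $u'uvv'$ of the alternating cycle becomes $u'xyv'$, so $|M_1\Delta M_2|$ does not decrease, and ``a temporary loss compensated later'' has no termination argument behind it.

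The missing idea is a helper edge of $M\cap M'$ that is moved and then \emph{restored}, applied to one short switch at a time. The paper does this in three steps:
\begin{itemize}
\item It proves connectivity under $4$-switches (Theorem~\ref{thm:4Switch}). Progress is automatic there, since the $4$-switch on $u_1\dots u_6xy$ inserts two $M'$-edges and destroys at most one common edge.
\item It writes each $4$-switch as $3$-switches (Lemma~\ref{lem:3Switch}).
\item It writes each $3$-switch on a $6$-cycle $u_1\dots u_6$ without even chord as the four $2$-switches $u_1u_2yx$, $yu_2u_3u_4$, $u_1xu_5u_6$, $yu_4u_5x$, where $xy\in M\cap M'$, $x\sim u_1,u_5$ and $y\sim u_2,u_4$ (Lemma~\ref{lem:2Switch}). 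This sequence puts $xy$ back.
\end{itemize}
Such an $xy$ exists because each set $E_i$ of ordered pairs $(x,y)$ with $xy\in M\cap M'$, $x\in N(u_i)$, $y\in N(u_{i+3})$ has more than $n/3-2$ elements. So some pair lies in three of the six $E_i$, two of whose indices differ by $2$, which is the required pattern up to symmetry.

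\textbf{Disconnection.} The case $\delta=\lfloor(2n-2)/3\rfloor$ is also unproved: you give no graph, and the shape you propose cannot work. Take parts of size about $n/3$, sparse edges inside parts and $\delta\approx 2n/3$. Then every vertex is adjacent to all but few vertices of the other two parts. A perfect matching containing an edge $a_1a_2$ inside $A$ has $\Omega(n)$ edges inside $B\cup C$. A $2$-switch of $a_1a_2$ with one of these whose ends are adjacent to both $a_1,a_2$ lowers the number of matching edges inside $A$ by one. So neither that count nor its parity is invariant.

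The extremal graph goes the other way. In the paper's $F_{2,n}$ (Lemma~\ref{lem:non_connected_precise}), two parts $X_1,X_3$ are cliques, $X_2$ is independent and complete to both, and there are no $X_1$--$X_3$ edges. An alternating $4$-cycle through an edge inside $X_1$ then stays in $X_1\cup X_2$. Its other edges lie inside $X_1$ or in even-length segments between $X_1$ and $X_2$. Hence every $2$-switch preserves the number of matching edges inside $X_1$, while $\delta(F_{2,n})=|X_2|+|X_3|-1=\lfloor(2n-2)/3\rfloor$.
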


\begin{theorem}\label{thm:k=3} Let $G$ be a graph on an even number
$n$ of vertices. If $G$ has minimum degree at most $n/2-1$, then the $3$-switch graph $\mathcal H_3(G)$ may be disconnected. If $G$ has minimum degree at least $n/2+2$, then $\mathcal H_3(G)$ is connected.
\end{theorem}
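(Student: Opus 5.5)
We treat the two halves of Theorem~\ref{thm:k=3} separately: an explicit construction for the lower bound, and a distance-reduction argument for the upper bound.

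\textbf{Lower bound.} We will construct, for each even $n$, a graph with $\delta(G)=n/2-1$ whose perfect matchings split into at least two classes that no $3$-switch connects. The naive near-bipartite example does not suffice, and we should record why. Take $A$ of size $n/2+1$, $B$ of size $n/2-1$, all $A$--$B$ edges, and a sparse graph $H$ inside $A$. Every perfect matching uses exactly one $H$-edge, and two such matchings using disjoint $H$-edges $ab$ and $cd$ are already joined by a $3$-switch: remove $ab,cx,dy$ and add $cd,ax,by$. So the construction must also restrict the freedom on the $B$ side. The plan is to pick the parameters of the general $k$-construction with $C_3=1$. Concretely, we take two dense parts whose sizes have the wrong parity, joined only through a small rigid gadget, namely a blown-up even cycle whose two perfect matchings differ on at least $8$ edges. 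The key point to arrange is that every perfect matching of $G$ restricts to one of the two ``phases'' of the gadget. Changing phase then forces a symmetric difference of at least $8$ edges, i.e.\ a $4$-switch. The things to verify are the minimum degree $n/2-1$ and that both phases occur, so that $\mathcal H_3(G)$ is genuinely disconnected.

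\textbf{Upper bound.} Let $\delta(G)\ge n/2+2$ and $M\neq M'\in\mathcal M_G$. It suffices to find $M''\in\mathcal M_G$ with $|M\triangle M''|\le 6$ and $|M''\triangle M'|<|M\triangle M'|$, and then induct. Fix an alternating cycle $C$ of $M\triangle M'$.
\begin{itemize}
\item If $|C|\le 6$, switching $C$ is itself a $3$-switch.
\item If $|C|\ge 8$, take consecutive vertices $v_1v_2v_3v_4$ on $C$ with $v_1v_2,v_3v_4\in M$ and $v_2v_3\in M'$. We want to add $v_2v_3$ and re-pair $v_1,v_4$ using a single further edge $xy\in M$. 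The required pattern is $v_1\sim x$, $v_4\sim y$: remove $v_1v_2,v_3v_4,xy$ and add $v_2v_3,v_1x,v_4y$.
\end{itemize}
The new matching agrees with $M'$ on the added edge $v_2v_3$. If in addition $xy\notin M'$, or $v_1x\in M'$ or $v_4y\in M'$, the distance to $M'$ strictly drops.

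The counting step runs over the $n/2$ edges of $M$. Since $\deg v_1+\deg v_4\ge n+4$, pigeonhole over these edges shows that at least about $4$ edges $xy\in M$ have $v_1\sim x$ and $v_4\sim y$ in some orientation. The slack of $+2$ in each degree is exactly what lets us discard the bad choices: edges incident to $v_1,\dots,v_4$, and edges of $M\cap M'$ whose destruction would not decrease the distance. If every usable $xy$ lies in $M\cap M'$, we instead choose $xy$ on $C$ itself or on another cycle of $M\triangle M'$.

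\textbf{Main obstacle.} The hardest part is the degenerate case where all good $xy$ lie in $M\cap M'$. Here the switch does not reduce $|M\triangle M'|$ directly, so we need a potential function that still decreases. The first thing to try is lexicographic: first the length of the shortest cycle of $M\triangle M'$, then the number of such cycles. Failing that, we would run the rotation argument with the roles of $v_1v_2$ and $v_3v_4$ shifted along $C$ and use the extra slack.
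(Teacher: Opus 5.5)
Both halves of your proposal have genuine gaps.

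\textbf{Upper bound.} Your induction needs a single $3$-switch from $M$ that strictly decreases $|M\triangle M'|$. Such a switch need not exist, and the ``degenerate'' case is the generic one.

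\emph{Counterexample.} Let $G$ consist of a chordless $8$-cycle $C=u_1\cdots u_8$, a clique on the other $n-8$ vertices, and all edges between the two. Then $\delta(G)=n-6\ge n/2+2$ for $n\ge 16$. Let $M,M'$ agree outside $C$ and alternate on $C$. Since $C$ is induced and outside vertices are $M$-matched among themselves, every excursion of an alternating cycle $D$ off $C$ has the form non-$M$, $M$, non-$M$, so it uses at least three edges. Hence every $D$ of length at most $6$ has $|D\cap C|\le |D|/2$, and $|(M\triangle D)\triangle M'|=8+|D|-2|D\cap C|\ge 8$.

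\emph{Your fall-backs do not rescue this.}
\begin{itemize}
\item After removing $v_1v_2,v_3v_4,xy$ with $xy\in M\cap M'$, the new symmetric difference contains the cycle $v_1xyv_4v_5\cdots v_{2q}$, again of length $2q$. So your lexicographic potential does not move.
\item The degree slack cannot exclude $M\cap M'$, which may contain all but four edges of $M$.
\item Nothing guarantees a usable $xy$ on $C$ or on another cycle.
\end{itemize}

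\emph{What is missing} is a bounded non-monotone detour together with a stronger counting step. The paper proceeds in two stages.
\begin{itemize}
\item It first proves connectivity under $4$-switches for $\delta\ge n/2+1$. There, switching $u_1\cdots u_6xy$ with $xy\in M\cap M'$ installs two $M'$-edges and so does decrease the distance.
\item It then shows that two perfect matchings differing on an $8$-cycle are joined by at most three $3$-switches. Either an even chord splits the cycle into a $4$- and a $6$-cycle, or $u_i\not\sim u_{i+3}$ for all $i$. In the latter case $\deg u_i+\deg u_{i+3}\ge n+3$, and a double-counting and discharging argument yields $xy\in M\cap M'$ with $y\sim u_1$ and $x\sim u_2,u_6$. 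The $3$-switches $u_1u_2xy$, $yu_2\cdots u_6$, $yu_6u_7u_8u_1x$ then take $M$ to $M'$; the distance goes up before it comes down.
\end{itemize}
The essential ingredient is an edge of $M\cap M'$ with \emph{three} prescribed adjacencies to the cycle, not two.

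\textbf{Lower bound.} You do not actually exhibit a graph, and none of the ingredients you name reaches $n/2-1$:
\begin{itemize}
\item The generic construction with $p=1$ has $\delta=n/2-(k+1)=n/2-4$ for $k=3$, not $n/2-1$.
\item A blow-up of an $8$-cycle on $n$ vertices has degree about $n/4$.
\item A small gadget all of whose vertices have degree at least $n/2-1$ is densely joined to the rest of the graph, so there is no evident reason for it to stay rigid under $3$-switches.
\end{itemize}

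The paper's graph $F_{3,n}$ has four blocks $X_1,\dots,X_4$ of size about $n/4$, with $X_1,X_4$ cliques, $X_2,X_3$ independent, and consecutive blocks completely joined. This gives $\delta=n/2-1$. The invariant is not a ``phase'' but the number of matching edges inside $X_1$:
\begin{itemize}
\item An alternating cycle of length at most $6$ through such an edge cannot reach $X_4$, which is at distance $3$.
\item The remaining edges on $V\setminus X_4$ form a bipartite graph with $X_1$ on one side. So every maximal stretch of the cycle avoiding edges inside $X_1$ has even length and contains equally many $M$- and non-$M$-edges, and the count is preserved.
\end{itemize}
Since perfect matchings with different counts exist, $\mathcal H_3(F_{3,n})$ is disconnected.
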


\begin{theorem}\label{thm:k>=2} Let $G$ be a graph on an even number
  $n$ of vertices. For every $\eps >1$, there is some $c >1$ such that the
  following holds for any $k \ge 2$ and $n$. If $G$ has minimum degree at most
  $n/2- k - \eps kn$, then the $k$-switch graph $\mathcal H_k(G)$ may have
  $c^n$ components such that, within each of these components, all matchings share a
  linear number of {\em frozen} edges.
\end{theorem}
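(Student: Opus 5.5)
The plan is an explicit construction, a ``space barrier'' with rigid gadgets attached. (I read the hypothesis ``$\eps>1$'' as $\eps>0$, as in the abstract.) Fix $k\ge 2$ and $n$. Choose an integer $z$ that is a multiple of $2k+2$, has $n-z$ even, and satisfies $z\ge 2k+2\eps kn$. Take $z$ as small as possible subject to this, so $z\le 2k+2\eps kn+2(2k+2)$. I assume $n$ is large enough that $z<n$. Put $x=y=(n-z)/2$.

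The vertex set is $X\cup Y\cup Z$ with $|X|=x$, $|Y|=y$ and $|Z|=z$. The edges are as follows:
\begin{itemize}
\item $X$ is independent and completely joined to $Y$;
\item $Y$ is a clique and is completely joined to $Z$;
\item $Z$ induces a disjoint union of $z/(2k+2)$ cycles of length $2k+2$.
\end{itemize}
The degrees are $y$ for vertices of $X$, $y+2$ for vertices of $Z$, and at least $x+z$ for vertices of $Y$. Hence $\delta(G)=y=n/2-z/2\le n/2-k-\eps kn$. To reach any smaller prescribed minimum degree, I enlarge $z$ (in steps of $2k+2$), possibly deleting a few edges from $X$ to $Y$; the argument below does not depend on which option is used.

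\emph{Structure of perfect matchings.} Since $X$ is independent and $N(X)=Y$ with $|X|=|Y|$, every perfect matching $M$ matches $X$ bijectively onto $Y$. So $M$ uses no edge inside $Y$ and no edge between $Y$ and $Z$. The restriction of $M$ to $Z$ is then a perfect matching of the union of the cycles, which means a choice of one of the two perfect matchings on each cycle $C_{2k+2}$. Conversely, any such choice together with any perfect matching of $K_{x,y}$ is a perfect matching of $G$. In particular $\mathcal M_G\neq\emptyset$, and every perfect matching is determined by a ``cycle pattern'' in $\{0,1\}^{z/(2k+2)}$ together with an $X$--$Y$ bijection.

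\emph{Frozen edges and many components.} Let $M_1,M_2$ be adjacent in $\mathcal H_k(G)$. Their symmetric difference is a union of alternating cycles of total length at most $2k$. Its edges lie in $M_1\cup M_2$, which contains no $Y$--$Z$ edge. So each alternating cycle lies entirely within $G[X\cup Y]$ or entirely within $G[Z]$. An alternating cycle inside $G[Z]$ must be one of the gadget cycles, which has length $2k+2>2k$; so none occurs. Thus adjacent matchings have the same cycle pattern, and this pattern is constant on each component of $\mathcal H_k(G)$. Consequently $\mathcal H_k(G)$ has at least $2^{z/(2k+2)}$ components. Within a component, all matchings share the same $z/2$ edges inside $Z$, which is a linear number of frozen edges.

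\emph{Quantitative step (main point of care).} The remaining task is to make $c$ independent of $k$ and $n$. We have
\[
\frac{z}{2k+2}\ \ge\ \frac{2\eps kn}{2k+2}\ \ge\ \frac{\eps n}{2}\qquad\text{for } k\ge 1,
\]
so $c=2^{\eps/2}$ works uniformly in $k\ge 2$. The delicate part is the simultaneous rounding of $z$: it must be a multiple of $2k+2$, must keep $n-z$ even, and must still leave $z<n$. This is where ``$n$ large enough'' (roughly $n\gtrsim 1/\eps$ relative to the additive $O(k)$ term) is needed. If $\eps k\ge 1/2$, the degree bound is vacuous or negative, and the statement is trivial or should be restricted to $\eps k<1/2$.
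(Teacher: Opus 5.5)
Your construction is the paper's graph $G_{k,p,1,n}$ from Lemma~\ref{lem:non_connectedness} in different notation. Your $Z$, $Y$, $X$ play the roles of its $X$, $Y$, $Z$, with $p=z/(2k+2)$, and making $Y$ a clique is harmless. Your structural argument is sound: $X$ is forced onto $Y$, so the gadget cycles must be matched internally, and no alternating cycle of length at most $2k$ fits inside a $(2k+2)$-cycle. Hence the cycle pattern is constant on components, giving at least $2^{z/(2k+2)}\ge 2^{\eps n/2}$ components, each with $z/2$ frozen edges.

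The flaw is in the choice of $z$: you round in the wrong direction. The statement, made precise in Corollary~\ref{cor:clustering} and in the definition of $\delta^{\mathbf{cluster}}_k$ (which ranges over all $G$ with $\delta(G)\ge\delta$), asks for a graph whose minimum degree is \emph{at least} about $n/2-k-\eps kn$ and which still has $c^n$ components. One such graph then witnesses every smaller degree bound as well, so enlarging $z$ or deleting $X$--$Y$ edges is unnecessary. Taking $z$ to be the least multiple of $2k+2$ with $z\ge 2k+2\eps kn$ only gives $\delta(G)=n/2-z/2\le n/2-k-\eps kn$. The actual value can be almost $k$ lower than this, so your graph does not reach the degree claimed in the statement. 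Even under your ``every smaller degree'' reading, the degrees in that top window are never produced, because both of your modifications only lower $\delta(G)$.

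The repair is the paper's choice $z=2(k+1)\lceil \eps n/2\rceil$, i.e.\ $p=\lceil\eps n/2\rceil$. You still have $z/(2k+2)\ge\eps n/2$. Moreover $z/2\le (k+1)+\eps n(k+1)/2\le k+\eps kn$ as soon as $\eps n(k-1)/2\ge 1$, for instance when $\eps n\ge 2$. So $\delta(G)\ge n/2-k-\eps kn$ with $c=2^{\eps/2}$. The paper's own computation only records the slightly weaker bound $n/2-(k+1)-\eps kn$.
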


\begin{table}
  \centering
  \begin{tabular}{|c|c|c|c|}
    \hline
    {\small Threshold} & $k=2$ & $k=3$ & $k \ge 4$\\
    \hline
    {\small Thawing} & $\left[\frac{n}2- 2 - o(n), \frac{n}2+1\right]$ & $\left[\frac{n}2-3-o(n), \frac{n}2+2\right]$ & $\left[\frac{n}2-k-o(n), \frac{n}2+1\right]$ \\
    {\small Clustering} & $\left[\frac{n}2-2-o(n), \left\lfloor
                    \frac{2n}3 \right\rfloor+1\right]$ & $\left[\frac{n}2-3-o(n), \frac{n}2+2\right]$ & $\left[\frac{n}2-k-o(kn), \frac{n}2+1\right]$\\
    {\small Giant comp.} & $\left[\frac{n}2-2-o(1), \left\lfloor
                    \frac{2n}3 \right\rfloor+1\right]$ & $\left[\frac{n}2-3-o(1), \frac{n}2+2\right]$ & $\left[\frac{n}2-k-o(k), \frac{n}2+1\right]$ \\
    {\small No isolated vts.}& $\left[\frac{n}2-1,\frac{n}2+1\right]$ & $\le \lceil0.3465 n\rceil +1$& {\small Conjecture:} $\frac{n}k +O(1)$%\todo[backgroundcolor=blue]{Ross:put something here?} 
    \\
    {\small Connectedness} & $\left[\left\lfloor \frac{2n+1}3 \right\rfloor,\left\lfloor
                    \frac{2n}3 \right\rfloor+1\right]$&$\left[\frac{n}2,
                                                        \frac{n}2+2\right]$ & $\left[ \frac{n}2 -k,\frac{n}2+1\right]$ \\
    {\small Expansion} & $\left[\left\lfloor \frac{2n+1}3 \right\rfloor,\left\lfloor
                    \frac{2n}3 \right\rfloor+1\right]$&$\left[\frac{n}2,
                                                        \frac{n}2+2\right]$ & $\left[ \frac{n}2-k,\frac{n}2+1\right]$ \\
    \hline
\end{tabular}
    \caption{(general case) Bounds on thresholds for the reconfiguration geometry
      of perfect matchings in terms of minimum degree (see
      Subsection~\ref{ssec:thresholds} for how to interpret these thresholds and how to interpret these bounds). 
      \label{tab:thresholds}}
\end{table}

Informally, we can recast these results as follows (see also
Table~\ref{tab:thresholds}). If we only allow $2$-switches, then the reconfiguration graph
$\mathcal H_2(G)$ on the space ${\cal M}_G$ of perfect matchings in $G$ exhibits the following phase transitions in
its component structure. Until a little before the non-emptiness threshold
$\delta(G)\ge n/2$, the graph $\mathcal H_2(G)$ can have isolated vertices and can be
``shattered'' into exponentially many components, each having a linear number in common of frozen
edges, that is, edges belonging to all matchings of the
component. Almost right after the non-emptiness threshold, $\mathcal H_2(G)$ must have
positive minimum degree (so no isolated vertices). Markedly past the
non-emptiness threshold, at around $\delta(G)=2n/3$, there is a threshold in the
connectivity of $\mathcal H_2(G)$: just below $2n/3$ it can be disconnected, while just
above $2n/3$ the space must be connected. Note that between $n/2$ and $2n/3$ it
remains open as to whether some ``large'' component in $\mathcal H_2(G)$ is guaranteed.
We posit that this occurs closer to $n/2$ and pose this explicitly as an open problem (see Conjecture~\ref{conj:giant} in Section~\ref{sec:open}).

If rather than only $2$-switches we allow $k$-switches for some fixed $k\ge 3$,
then the picture narrows somewhat. Until a little before the non-emptiness threshold, the
switch graph $\mathcal H_k(G)$ can be shattered into exponentially many components with a linear
number of frozen edges. Almost right at the non-emptiness threshold, $\mathcal H_k(G)$
goes from potentially having no linear size component to necessarily being connected.

One of the motivations for proving that some reconfiguration graph is connected is
to ensure that Markov chain Monte Carlo algorithms that sample from the corresponding space converge toward the desired probability distributions. However,
connectivity alone is not enough to ensure that these algorithms are
efficient: the rate of convergence of these Markov chains 
relates to the structural expansion properties of the
reconfiguration graph. We additionally prove that $\mathcal H_k(G)$ becomes a weak expander right at
the connectivity threshold 
and that one can sample in
polynomial time approximately uniform random perfect matchings by iteratively
applying random switches of bounded size (see
Subsection~\ref{ssec:sampling} for more precise details).

In the discussion above, we left incomplete the narrative with respect to positive minimum degree (or equivalently, the possible presence of isolated vertices) in $\mathcal H_k(G)$.
Perhaps contrary to intuition, for $k\ge 3$ this threshold is more subtle. It is related to a longstanding open problem.

\begin{theorem}\label{thm:caccettahaggkvistlink}
    For all $k\ge 2$, $d$ and even $n$, if there is a graph $G$ on $n$ vertices with $\delta(G) = d$ such that $\mathcal H_k(G)$ contains an isolated vertex,
    then there exists an oriented $n$-vertex graph with minimum outdegree $d-1$ and no directed cycle of length at most $k$.
\end{theorem}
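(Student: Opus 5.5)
The plan is to build the oriented graph directly from an isolated perfect matching. Suppose $M\in\mathcal M_G$ is an isolated vertex of $\mathcal H_k(G)$, and for a vertex $v$ write $M(v)$ for its partner in $M$. Define a digraph $D$ on $V(G)$ as follows: for every vertex $u$ and every neighbour $w\neq M(u)$ of $u$ in $G$, put an arc $u\to M(w)$.

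First, the easy properties of $D$.
\begin{itemize}
\item There are no loops. Indeed $M(w)=u$ forces $w=M(u)$, which is excluded.
\item Out-neighbours are distinct. The map $w\mapsto M(w)$ is a bijection, so $u$ has exactly $\deg_G(u)-1\ge d-1$ distinct out-neighbours. Hence the minimum outdegree of $D$ is at least $d-1$, and we may delete arcs to make it exactly $d-1$ if desired.
\end{itemize}
It remains to show that $D$ has no directed cycle of length at most $k$. In particular this excludes digons, which are cycles of length $2\le k$, so $D$ is then an oriented graph.

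Second, directed cycles give alternating structures. Suppose $u_1\to u_2\to\dots\to u_j\to u_1$ is a directed cycle in $D$ with $2\le j\le k$. By definition each $u_i$ is adjacent in $G$ to $M(u_{i+1})$, with indices taken mod $j$, via a non-$M$ edge. Consider the closed walk
\[
u_1,\ M(u_2),\ u_2,\ M(u_3),\ u_3,\ \dots,\ u_j,\ M(u_1),\ u_1 .
\]
Its edges alternate between non-$M$ edges $u_iM(u_{i+1})$ and $M$-edges $M(u_{i+1})u_{i+1}$, so it is an $M$-alternating closed walk of length $2j$. Suppose it is a genuine cycle, meaning all of $u_1,\dots,u_j,M(u_1),\dots,M(u_j)$ are distinct. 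Then $M\triangle C$ is a perfect matching at Hamming distance $2j\le 2k$ from $M$. This gives a neighbour of $M$ in $\mathcal H_k(G)$, contradicting isolation.

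Third, and this is the main obstacle, we must guarantee distinctness. The $u_i$ are distinct as vertices of a directed cycle, and hence so are the $M(u_i)$. The only possible collision is $u_l=M(u_i)$ for some $i\neq l$, that is, the cycle uses both endpoints of some $M$-edge. To handle this, take the directed cycle of length at most $k$ to be a \emph{shortest} one, and show that any such collision yields a strictly shorter directed cycle in $D$. Suppose $u_l=M(u_i)$. The arc $u_{l-1}\to u_l$ means $u_{l-1}$ is adjacent to $M(u_l)=u_i$. The arc $u_{i-1}\to u_i$ means $u_{i-1}$ is adjacent to $M(u_i)=u_l$. The vertex $u_l=M(u_i)$ therefore appears twice in the walk, once in the position of "$u_l$" and once in the position of "$M(u_i)$". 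The plan is to split the closed walk at this repeated vertex into two shorter closed walks, one of which is again $M$-alternating, namely the one that enters and leaves the repeated vertex through one $M$-edge and one non-$M$ edge. Reading this shorter walk back in terms of $D$ gives a directed cycle on a proper subset of $\{u_1,\dots,u_j\}$, for instance one using an arc of the form $u_{i-1}\to M(u_l)$ or $u_{l-1}\to M(u_i)$. This contradicts minimality, so I expect the bookkeeping of indices in the splitting to be the only delicate point.

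A cleaner fallback exists if the splitting becomes messy. Any closed $M$-alternating walk of length $2j$ in which every $M$-edge is traversed an odd number of times has a symmetric difference with $M$ that contains a nonempty alternating cycle of length at most $2j$. Applying that alternating cycle to $M$ again gives a neighbour of $M$ in $\mathcal H_k(G)$.
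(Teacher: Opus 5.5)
\textbf{There is a genuine gap, and it cannot be closed in the stated generality.} Your digraph is the paper's, with arcs reversed: the paper puts an arc $\bar x\to w$ for each non-matching edge $xw$. Your easy properties agree with the paper's.

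The paper then simply asserts that $x_1\bar x_1\cdots x_p\bar x_p$ is an alternating cycle, so you have correctly identified the one delicate point. Your repair of it does not work, however.

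At a collision $v=u_l=M(u_i)$, the only $M$-edge at $v$ is $vu_i$. Both visits of the walk to $v$ therefore use it, once in each direction. Cutting at $v$ produces two closed walks, each of which enters and leaves $v$ through edges of the \emph{same} type. Together they form two odd closed walks, one through $v$ and one through $u_i$, joined by the $M$-edge $vu_i$. Neither piece is $M$-alternating, and neither comes from a directed cycle of $D$, so minimality gives nothing.

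The fallback is vacuous. The $M$-edge at $u_t$ is traversed once for each $t$, so some used $M$-edge is traversed an even number of times (namely twice) precisely when a collision occurs.

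Here is a concrete example. Let $G$ be two triangles $aee'$ and $bcc'$ joined by the edge $ab$, and let $M=\{ab,cc',ee'\}$, its unique perfect matching. Your $D$ has arcs $a\to e$, $a\to e'$, $e\to b$, $e'\to b$, $b\to c$, $b\to c'$, $c\to a$, $c'\to a$. Every directed cycle has length $4$, e.g.\ $a\to e\to b\to c\to a$, which uses both ends of $ab$. Yet $G$ has no $M$-alternating cycle at all. So for $k\ge 4$, neither your $D$ nor the paper's is a valid witness.

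Worse, $\mathcal H_k(G)$ is a single isolated vertex for every $k$, with $n=6$ and $\delta(G)=2$. But for $k\ge 6$, every $6$-vertex digraph of minimum outdegree $1$ has a directed cycle of length at most $6$. So the statement as written is false, and no argument can close this step in full generality.

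What survives is the case $k\le 3$. There are no arcs between $u$ and $M(u)$ in either direction, so a collision $u_l=M(u_i)$ requires $u_l$ and $u_i$ to be non-consecutive on the cycle. That is impossible when $j\le 3$. Thus your argument, like the paper's, is valid for $k\in\{2,3\}$, which is all Corollary~\ref{cor:caccettahaggkvist} needs. For $k\ge 4$, a genuinely different construction or an extra hypothesis would be required. In the bipartite Theorem~\ref{thm:isolated_Caccetta_bip} the issue does not arise, since there the digraph lives on the $M$-edges.
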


The optimal choice of $d$ as a function of $n$ in the conclusion of Theorem~\ref{thm:caccettahaggkvistlink} is the object of the notorious Caccetta-Häggkvist conjecture~\cite{caccetta1978Minimal}. The special case $k=3$ has drawn special attention, and the state of the art there, due to Hladký, Král', and Norin~\cite{hladky2017Counting} using flag algebra, along with Theorem~\ref{thm:caccettahaggkvistlink}, implies the following.

\begin{corollary}\label{cor:caccettahaggkvist}
Let $G$ be a graph on an even number $n$ of vertices.
If $G$ has minimum degree strictly larger than $\lceil0.3465 n\rceil$, then $\mathcal H_3(G)$ contains no isolated vertices.
\end{corollary}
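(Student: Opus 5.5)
We argue by contraposition, using Theorem~\ref{thm:caccettahaggkvistlink} with $k=3$. Suppose $G$ is a graph on an even number $n$ of vertices with $\delta(G)=d$ and such that $\mathcal H_3(G)$ contains an isolated vertex. Theorem~\ref{thm:caccettahaggkvistlink} then yields an oriented $n$-vertex graph $D$ with minimum outdegree at least $d-1$ and no directed cycle of length at most $3$. In particular $D$ has no directed triangle. (An oriented graph has no digons, and it has no loops.)

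Next we invoke the result of Hladký, Král', and Norin~\cite{hladky2017Counting}. It states that every oriented graph on $n$ vertices with minimum outdegree at least $0.3465n$ contains a directed triangle. The bound is stated for real $0.3465n$; since outdegrees are integers, this is equivalent to the hypothesis $\delta^+ \ge \lceil 0.3465 n\rceil$. Since $D$ has no directed triangle, we get $d-1 < \lceil 0.3465 n\rceil$, that is, $d-1\le \lceil 0.3465n\rceil -1$, so $d\le\lceil 0.3465 n\rceil$.

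Contrapositively, if $\delta(G) > \lceil 0.3465 n\rceil$, then $\mathcal H_3(G)$ has no isolated vertex. Equivalently, this covers the case $\delta(G)\ge \lceil 0.3465n\rceil+1$, where $d-1\ge\lceil0.3465n\rceil$ forces a directed triangle. This is the claim.

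The argument is a two-line composition, so there is no real obstacle. The only care needed is in the arithmetic of the off-by-one: Theorem~\ref{thm:caccettahaggkvistlink} loses $1$ in the degree, and the strict inequality in the corollary exactly absorbs this. One must also confirm that the Hladký--Král'--Norin bound is stated as "minimum outdegree at least $0.3465n$ forces a directed triangle". If it were instead stated with a strict inequality, the same computation would go through after adjusting the ceiling accordingly.
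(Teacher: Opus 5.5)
Your proposal is correct and is essentially the paper's own argument. The paper obtains the corollary by composing Theorem~\ref{thm:caccettahaggkvistlink} at $k=3$ (equivalently Theorem~\ref{thm:isolated_Caccetta}, $\delta^{\mathbf{no iso}}_3(n)\le d_3(n)+2$) with the Hladk\'y--Kr\'al'--Norin bound $d_3(n)<0.3465n$ from~\cite{hladky2017Counting}, using the same off-by-one bookkeeping as you, which gives the entry $\lceil 0.3465n\rceil+1$ in Table~\ref{tab:thresholds}.
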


By Theorem~\ref{thm:caccettahaggkvistlink}, the hypothetical truth of the Caccetta-Häggkvist conjecture would then suggest $\lceil n/k\rceil +1$ as an upper bound on the threshold minimum degree of $G$ for positive minimum degree in the $k$-switch graph $\mathcal H_k(G)$. We indeed suspect that $\lceil n/k\rceil +1$ is the correct threshold in this case and highlight this explicitly as an open problem (see Conjecture~\ref{conj:isolated} in Section~\ref{sec:open}).

The special case of $G$ being a balanced bipartite graph on $2n$ vertices, since the number of perfect matchings in this case encodes the permanent, is of particular importance.
(We discuss further context for this below.)
Similarly to~\eqref{eqn:delta}, Cuckler and Kahn~\cite{CuKa09a,CuKa09b} also showed in this
case that provided $\delta(G) \ge n/2$,
\begin{align}\label{eqn:bipartite,delta} \Phi(G) \ge
\left(\frac{\delta(G)}{e+o_{n\to\infty}(1)}\right)^n.
\end{align}
We have established an almost completely analogous set of results for $\mathcal H_k(G)$ in this
special case for $G$, often with slightly sharper bounds (see Table~\ref{tab:thresholds_bip}).
\begin{table}
  \centering
  \begin{tabular}{|c|c|c|c|}
    \hline
    {\small Threshold} & $k=2$ & $k=3$ & $k \ge 4$\\
    \hline
    {\small Thawing} & $\left[\frac{n- 1}2- o(n), \left\lfloor\frac{n+3}2\right\rfloor\right]$ & $\left[\frac{n-2}2- o(n), \left\lfloor\frac{n}2\right\rfloor+1\right]$ & $\left[\frac{n-k+1}2- o(kn), \left\lfloor\frac{n+1}2\right\rfloor\right]$ \\
    {\small Clustering} & $\left[\frac{n-1}2-o(n), \left\lfloor
                    \frac{2n}3 \right\rfloor+1\right]$ & $\left[\frac{n-2}2- o(n), \left\lfloor\frac{n}2\right\rfloor+1\right]$ & $\left[\frac{n-k+1}2-o(n), \left\lfloor\frac{n+1}2\right\rfloor\right]$\\
    {\small Giant comp.} & $\left[\frac{n-1}2-o(1), \left\lfloor
                    \frac{2n}3 \right\rfloor+1\right]$ & $\left[\frac{n-2}2-o(1), \left\lfloor\frac{n}2\right\rfloor+1\right]$ & $\left[\frac{n-k+1}2-o(k),\left\lfloor\frac{n+1}2\right\rfloor\right]$ \\
    {\small No isolated vts.}& $\left\lfloor\frac{n+3}2\right\rfloor$ & $\left[\left\lfloor\frac{n+5}3\right\rfloor,\lceil0.3465 n\rceil +1\right]$ & {\small Conjecture:} $\lceil\frac{n}k\rceil +1$
    \\
    {\small Connectedness} & $\left\lfloor \frac{2n}3 \right\rfloor+1$&$\left\lfloor\frac{n}2\right\rfloor+1$ & $\left[\left\lfloor \frac{n-k+1}2 \right\rfloor,\left\lfloor\frac{n+1}2\right\rfloor\right]$ \\
    {\small Expansion} & $\left\lfloor \frac{2n}3 \right\rfloor+1$&$\left\lfloor\frac{n}2\right\rfloor+1$ & $\left[\left\lfloor \frac{n-k+1}2 \right\rfloor,\left\lfloor\frac{n}2\right\rfloor+1\right]$ \\
    \hline
\end{tabular}
    \caption{(bipartite case) Bounds on thresholds for the reconfiguration geometry
      of perfect matchings in terms of minimum degree, when $G$ is a balanced bipartite graph on $2n$ vertices (see
      Subsection~\ref{ssec:thresholds} for how to interpret these thresholds).
      \label{tab:thresholds_bip}}
\end{table}
An interesting point here is that we can actually establish {\em equivalence} between a variant of the Caccetta-Häggkvist conjecture and the minimum degree threshold that guarantees no isolated vertices in the $k$-switch graph of balanced bipartite graphs.
Specifically, we relate this threshold to the weakening of the Caccetta-Häggkvist conjecture that constrains semidegree rather than outdegree; this version of the conjecture has also been well-studied, it being closely related to a yet older conjecture of Behzad, Chartrand, and Wall~\cite{BCW70}.

We remark here also that, both for balanced bipartite graphs and general graphs, most of
our results extend to matchings of a fixed size $\gamma n/2$ for any fixed
$\gamma \in (0,1]$ by replacing $n/2$ by $\gamma n/2$ in the different
thresholds. Moreover, most of our results also generalise to Ore-type conditions, whereby rather than mandating minimum degree, we mandate a minimum sum of the degrees of two distinct non-adjacent vertices. 

As is probably already evident by now, the ``volume'' heuristic we offered earlier is indeed crude.
If $G$ is a balanced bipartite graph on
$2n$ vertices that is moreover {\em $d$-regular}, then an important result of
Schrijver~\cite{Sch98} (see also~\cite{Voo79} for the case $d=3$
and~\cite{Gur08,Csi17} for alternative proofs for general $d\ge3$) gives the
following lower bound on the number of perfect matchings in $G$:
\begin{align*}
  \Phi(G) \ge \left(\frac{(d-1)^{d-1}}{d^{d-2}}\right)^n.
\end{align*}
This asymptotically matches the expression in~\eqref{eqn:bipartite,delta}, 
except that it needs no condition like $d \ge n/2$, and so
{\em there are plenty of perfect matchings in $G$ regardless of the value of
  $d\ge 3$}. 
 On the other hand, if $d = d(n) \le n^{\theta}$ for fixed $\theta < 1/(4k-1)$, it is known~\cite{MWW04} for all sufficiently large $n$ that there is a bipartite $d$-regular graph $G$ of (finite) girth greater than $2k$ on $2n$ vertices, in which case $\mathcal H_k(G)$ consists of $\Phi(G)$ isolated vertices.
 Moreover, several of the disconnectedness constructions in Theorems~\ref{thm:k=2},~\ref{thm:k=3} and \ref{thm:k>=2} can be adapted so that $G$ is regular and bipartite. 
  
\subsection{Random sampling and Markov chains on matchings}\label{ssec:sampling}

The problem of counting the number of perfect matchings of a graph attracted
considerable attention over the years. When $G$ is a balanced bipartite graph,
this number is counted by the permanent of the adjacency matrix $A$ of $G$:
$$\perm(A) = \sum_{\sigma \in \mathfrak{S}_n} \prod_{i=1}^nA_{i\sigma(i)}.$$
Computing the permanent is a $\#P$-hard problem~\cite{valiant1979Complexity},
which motivated the study of approximation methods. To that end,
Broder~\cite{broder1986Hard} showed that the problem of approximately counting
the number of perfect matchings can be reduced to the generation of a random
perfect matching from an approximately uniform distribution. To that end, he
defined a Markov chain operating on perfect and near-perfect matchings (that is
matchings with at most two unmatched vertices), and showed that it mixes rapidly
on balanced bipartite graphs with $2n$ vertices and minimum degree at least $n$. Since the ratio
of near-perfect matchings to perfect matchings is bounded in these dense graphs,
this also gives an efficient randomised approximate sampler of perfect
matchings. Broder's rapid mixing proof relied on a coupling argument but
contained some error and was therefore withdrawn. Fortunately, Jerrum and
Sinclair~\cite{jerrum1989Approximating} offered a correct proof of rapid mixing of
this Markov chain (namely in time $O(n^5\log n)$), this time relying on the
canonical paths method. As a result, one can sample approximately uniform perfect matchings of dense balanced bipartite graphs in time $O(n^7\log n)$. Their proof generalises to general graphs whose ratio of
near-perfect to perfect matchings is polynomial and to arbitrary balanced
bipartite graphs. Under the additional assumption that the bipartite graph is
$\gamma n$-regular, perfect matchings can be sampled exactly uniformly, in time
$O(n^{1.5+.5/\gamma}\log n)$ (see~\cite{huber2006Exact}).

The $2$-switch Markov chain was introduced by Diaconis, Graham and
Holmes~\cite{diaconis2001Statistical}, who proved that this Markov chain is
ergodic for convex balanced bipartite graphs, that is, such that all ones are
consecutive in each row of the adjacency matrix. In~\cite{dyer2017Switch}, Dyer,
Jerrum and Müller observed that convex balanced bipartite graphs are a subclass
of chordal bipartite graphs, and proved that the $2$-switch Markov chain is also
ergodic in this wider class. They studied a hierarchy of hereditary graphs
classes: chain, monotone, biconvex, convex, chordal bipartite graphs and showed
that the $2$-switch Markov chain mixes rapidly on monotone graphs, but
exponentially on biconvex graphs.

A direct consequence of Theorem~\ref{thm:k=2} (respectively Theorem~\ref{thm:k=3}) is that the
$2$-switch Markov chain (respectively the $3$-switch Markov chain) could fail to be
ergodic on an $n$-vertex graphs of minimum degree at most $\lfloor (2n-2)/3
\rfloor$ but must be ergodic if $\delta(G) \ge \lfloor 2n/3
\rfloor+1$ (respectively at most $n/2-1$ and $n/2+2$). Furthermore, we prove that at and
above these connectedness thresholds, the $2$-switch and $3$-switch Markov chains
actually have polynomial mixing time.
\begin{theorem}\label{thm:mixing23}
  Let $G$ be an $n$-vertex graph with $\delta(G) \ge n/2+2$ (or respectively
  $\delta(G) \ge \lfloor 2n/3 \rfloor+1$). Alternatively, let $G$ be a
  balanced bipartite graph on $2n$ vertices with
  $\delta(G) \ge \lfloor n/2\rfloor+1 $ (or respectively
  $\delta(G) \ge \lfloor 2n/3 \rfloor+1$).\\
  The random walk on $\mathcal H_3(G)$ (respectively $\mathcal H_2(G)$) has mixing time polynomial
  in $n$.
\end{theorem}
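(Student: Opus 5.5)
The plan is to use the canonical paths (multicommodity flow) method of Jerrum and Sinclair. For every ordered pair $(M,M')$ of perfect matchings we build a path in $\mathcal H_3(G)$ (respectively $\mathcal H_2(G)$) from $M$ to $M'$. We then bound the congestion
\[
\rho=\max_{(X,Y)}\frac{1}{\pi(X)P(X,Y)}\sum_{(M,M')\ni (X,Y)}\pi(M)\pi(M'),
\]
and use $t_{\mix}\le \rho\,\ell\,\log(1/\pi_{\min})$ with $\pi$ uniform. Since $|\mathcal M_G|\le n^{n}$, we have $\log(1/\pi_{\min})=O(n\log n)$. The walk is lazy with uniform proposal, so $P(X,Y)\ge n^{-O(k)}$, because a $k$-switch is specified by $O(k)$ vertices. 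It therefore suffices to give paths of polynomial length $\ell$ and to show that every transition $(X,Y)$ is used by at most $\mathrm{poly}(n)\cdot|\mathcal M_G|$ pairs.

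\textbf{Construction of the paths.} Consider $M\triangle M'$, a disjoint union of alternating cycles, ordered canonically, for instance by smallest vertex and a fixed starting point. We process the cycles one at a time. Along a cycle $C=v_1v_2\dots v_{2m}$ with $v_1v_2\in M$, we unwind it by switches of bounded size. I would reuse the local step from the connectivity proofs of Theorems~\ref{thm:k=3} and~\ref{thm:k=2}. The minimum degree condition ($\delta\ge n/2+2$, respectively $\lfloor 2n/3\rfloor+1$) guarantees an auxiliary vertex pair or edge outside the cycle, currently matched by an edge $xy$. This lets us perform a $3$-switch (respectively $2$-switch) that fixes one or two edges of $C$ to agree with $M'$ while temporarily moving $xy$. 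A closing switch then restores $xy$ once the cycle has been reduced to length at most $2k$. Each cycle therefore uses $O(m)$ switches, so $\ell=O(n)$. For the bipartite case the same template applies with the bipartite thresholds.

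\textbf{Injective encoding.} Following Jerrum--Sinclair, for a transition $(X,Y)$ on the path from $M$ to $M'$, define
\[
\eta(M,M')=M\triangle M'\triangle X
\]
after correcting a bounded number of edges. The correction is needed because $M\triangle M'\triangle X$ is a perfect matching except near the currently processed cycle and the auxiliary edge $xy$. There it differs from a perfect matching by $O(1)$ edges, so we can repair it using $O(1)$ extra bits of data: a constant number of vertices, i.e.\ $n^{O(1)}$ choices. Given $(X,Y)$, the repaired encoding, and this side information, we reconstruct $M\triangle M'$ and the canonical ordering. From these we determine which cycles are already processed, and hence recover $M$ and $M'$. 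This gives the required bound of $n^{O(1)}|\mathcal M_G|$ pairs per transition, and so $\rho=\mathrm{poly}(n)$.

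\textbf{Main obstacle.} The hard step is making the unwinding local and recoverable. The auxiliary edges chosen via the degree condition must be determined canonically by the current state together with $O(1)$ stored vertices. They must also not interfere with already processed cycles, whose edges belong to $M'$, or with unprocessed ones, whose edges belong to $M$. I would first try always picking the auxiliary edge $xy$ in $X$ as the lexicographically first one satisfying the needed adjacencies, and recording its endpoints in the encoding. The degree bounds ($n/2+2$ leaves room for common neighbourhoods of size at least $4$; $2n/3+1$ gives triple intersections) should ensure such an edge exists. The danger is that $xy$ lies in $M\triangle M'$. I would handle this by allowing it and encoding the perturbation with $O(1)$ further vertices.
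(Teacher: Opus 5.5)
Your overall framework is the right one: canonical paths that process the cycles of $M\triangle M'$ in a fixed order, plus a Jerrum--Sinclair encoding with $O(1)$ side information. The gap is the decisive ingredient, a local unwinding step using only $3$-switches (respectively $2$-switches) whose existence follows from the degree condition. You assert this step rather than construct it, and the mechanism you sketch does not work.

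Suppose $xy$ has been displaced, so the current matching contains $v_1x$ and $yv_{2j}$. A $2$- or $3$-switch can then advance along $C$ with still only one displaced edge only if one of a few specific adjacencies holds, for example $y\sim v_{2j+2}$, $y\sim v_{2j+4}$, or $v_1\sim v_{2j+2}$. Neither $\delta\ge n/2+2$ nor $\delta\ge\lfloor 2n/3\rfloor+1$ forces any of these at every step. If you recruit a fresh edge $x'y'$ instead, one of two things happens. Either displaced edges accumulate, and $O(1)$ side information no longer suffices. Or the step that reabsorbs $xy$ while recruiting $x'y'$ is the $8$-cycle $v_1xyv_{2j}v_{2j+1}v_{2j+2}y'x'$, which is a $4$-switch. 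So ``carry one auxiliary edge around $C$ and restore it at the end'' is unjustified.

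Your existence heuristic also targets the wrong object. What is needed is an edge $xy$ of the current common matching whose two endpoints have prescribed neighbours on a short cycle: $y\sim u_1$ and $x\sim u_2,u_6$ on an $8$-cycle, or $x\sim u_1,u_5$ and $y\sim u_2,u_4$ on a $6$-cycle. This is exactly where the thresholds enter. They do so through double counting over $M\cap M'$ and, in the non-bipartite $3$-switch case, a discharging argument, not through the sizes of common neighbourhoods of two or three vertices.

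The paper resolves this in two levels.

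First, it builds canonical paths for the $4$-switch chain. There, pigeonhole under the Ore-type condition always yields either a chord $v_1v_{2(k+l)}$ or a fresh deceptive edge. At most one deceptive edge is ever outstanding, and it is reabsorbed at the next step. The encoding is a near-perfect matching. Lemma~\ref{lem:near-perfect} then bounds the number of encodings by $n^2|\mathcal M_G|$, giving $O(n^8\log n)$ mixing. Your ``repair'' step silently needs this lemma too, and the lemma itself uses the degree condition.

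Second, it replaces each $8$-cycle $4$-switch by at most three $3$-switches (Lemma~\ref{lem:3Switch}) and each $6$-cycle $3$-switch by at most four $2$-switches (Lemma~\ref{lem:2Switch}). Each replacement touches $O(1)$ vertices, so each small transition serves only $n^{O(1)}$ large ones, and comparison gives polynomial mixing.

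You could fold this into your single-level scheme by concatenating these gadgets into your canonical paths and adding their $O(1)$ vertices to the encoding. But those gadgets and their existence proofs are precisely the content your proposal is missing.
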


Although the upper bounds we obtain for the mixing times of the switch Markov chains
are larger than those of Jerrum and Sinclair on the mixing time of
Broder's Markov chain, making it suboptimal for sampling purposes, the switch chains have the merit of operating solely on perfect matchings.

\subsection{Organisation}

We prove in Section~\ref{sec:connectedness} all our upper bounds on the connectedness
thresholds. In Section~\ref{sec:lower}, we prove the lower bounds on the connectedness,
giant component, clustering and thawing thresholds for general graphs and
balanced bipartite graphs.  In Section~\ref{sec:random_sampling} we prove the
polynomial mixing of the switch Markov chain. Section~\ref{sec:isolated} focuses on the
threshold for isolated matchings. Finally, we discuss open questions and
perspectives in Section~\ref{sec:open}.

\section{Preliminaries}
We denote $N(u)$ the neighbourhood a vertex $u$.
Given an even cycle $C$, an \defin{even chord} of $C$ is a chord splitting the
cycle in two even cycles.

\subsection{Markov chains and mixing times}\label{ssec:Markov}
A Markov chain is a memoryless stochastic process over a discrete state space
$\Omega$. More precisely, a sequence of random variable $(X_0, X_1, \dots)$ is a
Markov chain with transition matrix $P$ if for all $t \in \mathbb{N}$ and $x_1,
\dots x_{t+1} \in \Omega$,
\begin{align*}
  \Prob(X_{t+1}  = x_{t+1} | \forall i \in [t], X_i = x_i) 
  &= \Prob(X_{n+1} = x_{n+1}|X_n=x_n)
  = P(x_{n},x_{n+1}).
\end{align*}

All Markov chains considered in this article are over a finite state space
$\Omega$. By representing the distribution of $X_t$ over $\Omega$ as a row
vector $\mu_t$ with $\norm{\mu_t}_1 = 1$ and $i$-th coordinate equal to
$\Prob(X_t = x_i)$, we have $\mu_t = \mu_{t-1}P$ and more generally $\mu_t =
\mu_0P^t$. With a slight abuse of notation, we will write $\mu_t(x) =
\Prob(X_t=x)$ and $\mu_t(A) = \Prob(X_t \in A)$.

A Markov chain is \defin{ergodic} if $\mu_t(x) > 0$ for all $x \in \Omega$, all
initial distribution $\mu_0$ and all large enough $t$. In particular, every
ergodic Markov chain admits a unique stationary distribution $\pi$, that is such
that $\pi = \pi P$. Note that $\pi$ is uniform if $P$ is symmetric. Moreover,
ergodic Markov chains converge towards their stationary distribution:
$\sup_{\mu_0}\norm{\mu_0P^t - \pi}_{TV}$ tends to zero as $t$ tends to infinity,
where $\norm{\mu - \nu}_{TV} = \max_{A \subseteq \Omega} |\mu(A) - \nu(A)|$ is
the total variation distance. The rate of this convergence is measured by the
\defin{mixing time}: $\tau_{\mix} = \min\{t: d(t) < 1/4\}$, where $d(t)=
\sup_{\mu_0}\norm{\mu_0P^t - \pi}_{TV}$. The constant $1/4$ might look
arbitrary, but using any other constant $\eps$ instead affects the mixing time
only by a factor of $\log_2(1/\eps)$.

\subsubsection*{The canonical paths method}
A Markov chain is said to be polynomially mixing if its mixing time is bounded
by a polynomial in $\log(|\Omega|)$.  The mixing time is tied to the expansion of
the state space graph $\mathcal H$ of the Markov chain, that is the graph whose vertex set is
$\Omega$ and edges corresponding to non-zero probability transitions. Several
methods exist to bound the mixing time: couplings, spectral methods and the
canonical paths method. We will use the last of these in this article. Let
$\Gamma=(\gamma_{xy})_{x,y\in \Omega}$, be a set of paths called
\defin{canonical paths}, joining any two states $x$ and $y$ of $\Omega$ in the
 state space graph $\mathcal H$. The congestion of $\Gamma$ is defined as
$$\varrho(\Gamma) = \max_{(a,b) \in E(\mathcal H)} \left\{\frac{\sum\limits_{(x,y)\in
      \cp(a,b)} \pi(x)\pi(y)|\gamma_{x,y}|}{\pi(a) P(a,b)}\right\}.$$ The
congestion measures how evenly the set of canonical paths are distributed, and
whether the resulting trace on the state space graph has good expansion, and thus
bounds the mixing time: $$\tau_{\mix} \le 2\varrho\log(|\Omega|).$$ Therefore, the
core of the canonical paths methods consists in defining an appropriate set of
canonical paths, and bounding its congestion.

\subsubsection*{Expansion of $\mathcal H_k(G)$}
It is well known that rapid mixing of a random walk on a graph is tied to
its expansion properties, as the main obstacle to rapid mixing are bottlenecks,
and expanders graphs have bounded positive bottleneck ratio. There are many
different definitions of expander graphs; we consider here the spectral one, and
more specifically by considering the eigenvalues of its random-walk normalised
Laplacian. Given a transition matrix $P$, let
$$\lambda_\star = \max \{|\lambda| \colon \lambda \text{ is an eigenvalue of } P,
\lambda \neq 1\}.$$ The \defin{spectral gap} of $P$ is defined as
$\gamma_\star = 1-\lambda_\star$. Given some constant $c>0$, we say that a graph
is a \defin{$c$-expander} if the
lazy uniform random walk on $\mathcal H$ has spectral gap at least $c$. The spectral gap
and the mixing time are tied as follows (see
\cite[Theorems 12.5 and 12.4]{levin2017Markov}):
\begin{equation}\label{eq:expander_mixing}
  \Omega\left(\frac1{\gamma_\star}\right) \le \tau_{\mix};
\end{equation}
\begin{equation}\label{eq:mixing_expander}
  \tau_{{\mix}} \le O\left(\frac{\log |{\mathcal H}|}{\gamma_\star}\right).
\end{equation}
In particular, \eqref{eq:expander_mixing} shows that a lazy random walk on a
$c$-expander ${\mathcal H}$ mixes in time $O(\log |{\mathcal H}|/c)$. Conversely,
\eqref{eq:mixing_expander} shows that in the regime of Theorem~\ref{thm:mixing23},
$\mathcal{H}_k(G)$ is a {$\Omega(n^{-d})$-expander} for some fixed $d$. As $n$
is the number of vertices of $G$, by \eqref{eqn:delta}, $\log |\mathcal{H}_k(G)| = \Theta(n\log
n)$, hence $\mathcal{H}_k(G)$ is a $\tilde{\Omega}(1)$-expander when $G$
satisfies the conditions of Theorem~\ref{thm:mixing23}.

\subsection{Matchings}
A matching is \defin{near-perfect} if all vertices but two are matched. 
A matching is called a $\gamma n$-matching if exactly $\gamma n$ vertices are
matched. Given two matchings $M$ and $M'$, their symmetric difference has
maximum degree two. We call any cycle of $M \Delta M'$ an \defin{alternating cycle},
 any maximal path of $M \Delta M'$ an \defin{alternating path}, and any path in $M \Delta M'$ an \defin{alternating subpath}.
 
 A key argument in the analysis of our
Markov chain (and also in the analysis of Broder's Markov chain) is that the
ratio of perfect matchings to near-perfect matchings is polynomially bounded
when the minimum degree of $G$ is sufficiently high.

\begin{lemma}\label{lem:near-perfect}
  Let $G$ be an $n$-vertex graph in which any two non-adjacent vertices $u$ and
  $v$ have degrees that sum to at least $n-1$. Alternatively, let $G$ be a
  balanced bipartite graph on $2n$ vertices such that the degrees of each pair
  of non-adjacent vertices $u$ and $v$ in different halves of the bipartition
  sum up to at least $n$. The number of near-perfect matchings of $G$ is at most
  $n^2$ times the number of perfect matchings of $G$.
\end{lemma}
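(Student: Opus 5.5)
We will prove the lemma with an injective-type charging argument: each near-perfect matching is mapped to a perfect matching, and we bound how many near-perfect matchings can map to the same perfect matching. Let $N$ be a near-perfect matching with unmatched vertices $u,v$.

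If $uv\in E(G)$, then $N+uv$ is perfect. Otherwise the Ore-type condition gives $d(u)+d(v)\ge n-1$. Every neighbour of $u$ and of $v$ is matched by $N$, since otherwise $N$ could be enlarged. We count the edges $xy\in N$ such that $ux,vy\in E(G)$, i.e.\ the edges for which $N-xy+ux+vy$ is perfect. For each edge $xy\in N$, let $e_u(xy)=|N(u)\cap\{x,y\}|$ and define $e_v(xy)$ similarly. Then
\[
\sum_{xy\in N}\bigl(e_u(xy)+e_v(xy)\bigr)=d(u)+d(v)\ge n-1 > 2|N| = n-2 .
\]
So some edge $xy\in N$ has $e_u+e_v\ge 3$. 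Then either $ux,vy\in E$ or $uy,vx\in E$, and in each case this is an augmenting path of length $3$. Fix a canonical rule, for instance the lexicographically smallest such edge and orientation, and let $\phi(N)$ be the resulting perfect matching. In both cases $\phi(N)$ differs from $N$ by adding at most two edges at $u$ and $v$ and deleting at most one edge.

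For the multiplicity bound, fix a perfect matching $M$. Any $N$ with $\phi(N)=M$ is recovered from $M$ as follows. Either $N=M-e$ for some edge $e\in M$, which gives $n/2$ choices. Or $N=M-ux-vy+xy$, which is determined by the choice of the two edges $ux,vy\in M$ together with which endpoint of each plays the role of $u$ and $v$; this gives at most $\binom{n/2}{2}\cdot 4$ choices. The total is at most $n/2+n^2/2-n \le n^2$, so the number of near-perfect matchings is at most $n^2$ times the number of perfect matchings.

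The bipartite case is identical. The two unmatched vertices $u,v$ lie in different halves, $|N|=n-1$, and the hypothesis gives $d(u)+d(v)\ge n>n-1=|N|$. Each edge $xy\in N$ with $x$ on $v$'s side and $y$ on $u$'s side contributes at most one neighbour of $u$ (namely $y$) and one neighbour of $v$ (namely $x$). Pigeonhole therefore yields an edge with $uy,vx\in E$, and the counting bound is $n+2\binom{n}{2}\le n^2$.

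The only delicate points are the pigeonhole step, which relies on the bound being strict ($n-1$ versus $n-2$ in the general case, $n$ versus $n-1$ in the bipartite case), and checking that the preimage count stays below $n^2$ rather than $2n^2$. The count is kept small because the orientation is fixed once the two removed $M$-edges and their roles are chosen.
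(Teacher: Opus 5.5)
Your proof is correct and is essentially the paper's argument: augment via $uv$ or via a length-$3$ augmenting path found by the same pigeonhole, then bound multiplicities. The only presentational difference is that the paper records $\{u,v\}$ alongside the perfect matching to get an injection into pairs $\times$ perfect matchings, which is your fibre count $n/2+4\binom{n/2}{2}=\binom{n}{2}$ in another form.

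Fix one labelling slip in the bipartite paragraph. With $x$ on $v$'s side and $y$ on $u$'s side, it is $x$ that can be a neighbour of $u$ and $y$ that can be a neighbour of $v$. So the edge you find satisfies $ux,vy\in E$; the pairs $uy$ and $vx$ lie in the same half and cannot be edges.

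A smaller point: the neighbours of $u$ and $v$ are matched simply because $u,v$ are the only unmatched vertices and $uv\notin E$, not because $N$ ``could be enlarged''.
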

\begin{proof}
  Let $\gamma$ be the application that maps near-perfect matchings to a couple
  composed of a set of two vertices and a perfect matching, defined as follows. Let $N$
  be a near-perfect matching with non-matched vertices $u$ and $v$. If $u$ and
  $v$ are adjacent in $G$, let $\gamma(N) = (\{u,v\}, N \cup uv)$. If $u$ and
  $v$ are non-adjacent, let $V' = V(G) \setminus \{u,v\}$, let $A$ be the subset
  of vertices of $V'$ matched by $N$ to a neighbour of $v$ and $B = N(u) \cap
  V'$ \footnote{If $G$ is a balanced bipartite graph, let $V' = V_1 \setminus
    \{v\}$, where $V_1$ is the half of the bipartition containing $v$. We then
    have $|A| + |B| = \deg(u) + \deg(v) > n-1 = |V'|$.}. We have $|A| + |B| =
  \deg(u) + \deg(v) > n-2 = |V'|$ so by the pigeonhole principle, there exists $x
  \in A \cap B$. In other words, there exists an edge $xy$ of
  $N$ such that $x$ is adjacent to $u$ and $y$ to $v$. Let
  $\gamma(N) = (\{u,v\}, N \cup \{xu, yv\} \setminus xy$).

  It is straightforward to check that the application $\gamma$ is injective, from
  which the result immediately follows.
\end{proof}

\subsection{Thresholds in the switch graph on matchings}\label{ssec:thresholds}

Here, we would like to set on more solid ground the thresholds of interest to us that we have informally referred to in Tables~\ref{tab:thresholds} and \ref{tab:thresholds_bip}.
Let $\mathcal H$ be a graph. 
(Think of $\mathcal H$ as some instantiation of a switch graph for matchings in some graph $G$.)
Let ${\mathcal P}$ be a graph property.
For technical reasons, we will require that ${\mathcal P}({\mathcal H})$ always holds if $\mathcal H$ is the empty graph. Although there are many other $\mathcal P$ that we might also naturally consider, let us specify (most of) the ones we focus on in this paper.
We consider $c\ge 1$ as some fixed real.
\begin{description}
\item[connect] $\mathcal H$ is connected.
\item[giant] $\mathcal H$ has a component of size at least $|{\mathcal H}|/c$.
\item[noiso] $\mathcal H$ has no isolated vertices.
\item[expand] For some absolute polynomial $P$, the simple random walk on
  $\mathcal H$ mixes in time $P(\log |\mathcal H|)$ (see Subsection~\ref{ssec:Markov} for a
  discussion on the connections between polynomial mixing time and the expansion
  properties of $\mathcal H$).
\end{description}

As alluded to in the introduction, we tune the $\mathcal H$ under consideration by varying the minimum degree condition on the underlying graph $G$.
In particular, we are interested in the least minimum degree for $G$ that guarantees the corresponding $\mathcal H$ to have some given property $\mathcal P$.
More precisely, given an integer $k\ge 2$ and a rational $0\le \gamma\le 1$, for all $n$ such that $\gamma n/2$ is integral and for all $\delta \in (0,n)$, let
${\mathcal H}_k(n,\gamma,\delta)$ be the following collection of graphs:
\[
\{{\mathcal H}_{k}(G,\gamma) \colon |G|=n \text{ and }\delta(G)\ge \delta\},
\]
where ${\mathcal H}_k(G,\gamma)$ denotes the $k$-switch reconfiguration graph on the space ${\cal M}_{G,\gamma}$ of $\gamma|G|$-matchings of $G$.
Given some graph property ${\mathcal P}$ as above, we are interested in the threshold minimum degree to guarantee ${\mathcal P}$ in ${\mathcal H}_{k}(G,\gamma)$, defined as
\[
\delta^{\mathcal P}_k(n,\gamma) = \min\left\{ \delta \colon {\mathcal P}({\mathcal H})\text{ holds for all }{\mathcal H}\in {\mathcal H}_k(n,\gamma,\delta) \right\}.
\]
It is straightforward to see that the minimum is well-defined if and only if ${\mathcal P}({\mathcal H}_k(K_n,\gamma))$ holds.
Note that, for the purposes of bounding threshold functions for ${\mathcal P}$, the technical condition on ${\mathcal P}$ lets us consider only those $G$ for which ${\mathcal H}_k(G,\gamma)$ is non-empty.

Since we only consider thresholds of the above form, it makes sense to consider graph properties $\mathcal P$ specific to ${\mathcal H}\in {\mathcal H}_k(n,\gamma,\delta)$, that is, based possibly on properties of the underlying graph $G$. The followings are two such properties we consider, where again $c\ge 1$ is some fixed real.
Given some edge $e$ of the graph $G$ and some connected subgraph $\mathcal C$ of ${\mathcal H}_k(G,\gamma)$, we say that $e$ is \defin{frozen} if $e \in M$ for all $M\in {\mathcal C}$.
\begin{description}
\item[thaw] In every component of $\mathcal H$, where ${\mathcal H} = {\mathcal H}_k(G,\gamma)$ for some $G$, $\gamma$, with $|G|=n$, the number of non-frozen edges is at least $\gamma n/(2c)$,
\item[cluster] If ${\mathcal H} = {\mathcal H}_k(G,\gamma)$ for some $G$, $\gamma$, with $|G|=n$, then $\mathcal H$ has fewer than $c^{n}$ connected components.
\end{description}

Note that each of the three properties defined above according to some parameter $c\ge 1$ is monotone with respect to $c$: if $c' > c$, then for every $\mathcal{H}$, if $\mathcal{P}_c$ is satisfied on $\mathcal{H}$ then $\mathcal{P}_{c'}$ also holds on $\mathcal{H}$. In particular, the closer $c$ is to 1, the harder these properties are to satisfy.

Note that for convenience, we will use the abbreviated forms ${\mathcal H}_k(n,\delta)$ and $\delta^{\mathcal P}_k(n)$ when $\gamma=1$. We also analogously denote by
$\delta^{\mathcal P}_{k,\mathrm{bip}}(n,\gamma)$ and $\delta^{\mathcal P}_{k,\mathrm{reg}}(n,\gamma)$
the threshold functions where the corresponding $G$ must be a
balanced bipartite graph on $2n$ vertices and a regular balanced bipartite
graph on $2n$ vertices, respectively.

For example, the inequality
$\gamma n/2- k - o(k) \le \delta^{\mathbf{giant}}_{k} (n,\gamma) \le \gamma n/2+1$ should
be read as follows. The first inequality states that for any $\eps > 0$, there exists $c \ge 1$ such that for every integer $n$ such that $\gamma n/2$ is integral, there exists an $n$-vertex graph $G$ with minimum degree at least $\gamma n/2 - k -1 -\eps k$ such that $\mathcal H_{k}(G,\gamma)$ contains no component of size at least
$|\mathcal H_{k}(G,\gamma)|/c$. The second inequality states that for all $c \ge 1$ and every
$n$-vertex graph $G$ with minimum degree at least $\gamma n/2+1$,
$\mathcal H_{k}(G,\gamma)$ contains a component of size at least
$|\mathcal H_{k}(G,\gamma)|/c$.

\section{Upper bounds on the connectedness thresholds} \label{sec:connectedness}
In this section, we show the upper bounds on the connectedness
thresholds of Theorems~\ref{thm:k=2} and \ref{thm:k=3}. We recast them here.

\begin{theorem}\label{thm:2SwitchMatching}
  Let $G$ be an $n$-vertex graph with $\delta(G) \ge \lfloor 2n/3 \rfloor+1$ (or a balanced
  bipartite graph on $2n$ vertices, with $\delta(G) \ge \lfloor 2n/3 \rfloor+1$). All perfect
  matchings of $G$ are equivalent via $2$-switches.
\end{theorem}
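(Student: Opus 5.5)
We argue by induction on the size of the symmetric difference. Let $M, M'$ be two perfect matchings of $G$. Then $M \Delta M'$ is a disjoint union of even alternating cycles. It suffices to show the following: whenever $M \neq M'$, there is a sequence of $2$-switches from $M$ to some perfect matching $M''$ with $|M'' \Delta M'| < |M \Delta M'|$. We may also apply switches on the $M'$ side, since $2$-switch equivalence is symmetric. So we fix one alternating cycle $C = u_1u_2\dots u_{2\ell}$ of $M\Delta M'$, with $u_{2i-1}u_{2i}\in M$, and aim to shorten it or resolve it.

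\textbf{Case $\ell = 2$.} The cycle $C$ has length $4$, and replacing $M\cap C$ by $M'\cap C$ is itself a single $2$-switch. This finishes that cycle.

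\textbf{Case $\ell \ge 3$, using an even chord.} The key step is to use the minimum degree to find a chord of $C$ that, combined with one $2$-switch, shortens the cycle. Suppose $C$ has an even chord $u_au_b$ such that the edge $u_bu_{b'}$ of $M$ at $u_b$ together with the edge $u_au_{a'}$ of $M$ at $u_a$ can be switched for $u_au_b$ and $u_{a'}u_{b'}$, where $u_{a'}u_{b'}\in E(G)$. This $2$-switch yields a matching whose symmetric difference with $M'$ splits $C$ into strictly shorter cycles, or reduces its length by $2$. The simplest such configuration is $b = a+3$: $u_{a+1}u_{a+2}\in M'$ is already present, and we need $u_au_{a+3}\in E(G)$ with the right parities. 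Switching $u_{a-1}u_a$ (or the appropriate neighbouring $M$-edges) then replaces a segment of length $4$ by a chord, and the cycle length drops by $2$.

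\textbf{Case $\ell \ge 3$, existence of the chord.} This is the main obstacle. We must show that if $\delta(G)\ge\lfloor 2n/3\rfloor+1$, such a useful chord always exists. If none exists, we look for a $4$-cycle alternating between an $M$-edge of $C$, a vertex, and another edge lying outside $C$. That is, we look for an $M$-edge $xy$ (possibly outside $C$) such that $u_ax$ and $u_{a'}y$ are edges, where $u_au_{a'}\in M$ lies on $C$. A $2$-switch replaces $\{u_au_{a'},xy\}$ by $\{u_ax,u_{a'}y\}$, and a second switch then makes progress on $C$.

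The counting argument goes as follows. For a vertex $u_a$ of $C$, its $M$-partner $u_{a'}$, and its $M'$-partner $w$, let $A=N(u_a)$, $B=N(w)$ and $D=N(u_{a'})$. Each of these has size greater than $2n/3$, so $|A\cap B\cap D|>0$. Pulling these back through $M$ (the map $v\mapsto M(v)$), the three sets $A$, $M(B)$ and $M(D)$ each have size greater than $2n/3$, so they share a vertex $z$. This gives an $M$-edge $zM(z)$ with $u_az \in E(G)$ and $M(z)$ adjacent to both $w$ and $u_{a'}$. Such a vertex $z$ enables a switch sequence ($\{u_au_{a'}, zM(z)\}\to\{u_az, u_{a'}M(z)\}$, and so on) that inserts $w$'s $M'$-edge into the matching while keeping the number of edges of $M\Delta M'$ non-increasing and strictly decreasing overall. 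I expect the careful case analysis here to be the crux: $z$ may lie on $C$, may coincide with $w$, or may lie on another cycle. The threshold $2n/3$ is exactly what is needed for three neighbourhoods to intersect, which matches the lower-bound constructions. If $z$ lies on another alternating cycle, we must check that that cycle does not grow.

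\textbf{Bipartite case.} The argument is identical. Neighbourhoods now live in one side of size $n$, so each of the three sets has size greater than $2n/3$ within a set of $n$ vertices, and they again intersect. The same switch sequence works because all the edges it uses respect the bipartition.
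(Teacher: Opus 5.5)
\textbf{There is a genuine gap in the no-chord case, and that case is the whole content of the theorem.} Your opening reduction is fine: induction on $|M\Delta M'|$, the $4$-cycle case, and a chord $u_au_{a+3}$ across an $M$-alternating subpath of length $3$.

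\emph{The counting finds nothing.} Take $z=u_{a'}$. Then $M(z)=u_a$, and $u_az$, $M(z)w$, $M(z)u_{a'}$ are all edges. So $u_{a'}$ always lies in $A\cap M(B)\cap M(D)$. Moreover, $|A|+|M(B)|+|M(D)|-2n$ is only guaranteed to be $1$ when $n\equiv 1 \pmod 3$. So you have not produced any non-degenerate $z$.

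\emph{A genuine $z$ would not suffice either.} Suppose $zz'\in M\cap M'$ with $u_az,\,z'u_{a'},\,z'w\in E(G)$. This does not yield the switch sequence you describe. On $\{u_a,u_{a'},w,M(w),z,z'\}$ the only known edge at $M(w)$ is $wM(w)$. To put $u_aw$ into the matching you therefore need an extra edge such as $zM(w)$ or $u_{a'}M(w)$, or further vertices. But $M(w)\,w\,u_a\,u_{a'}$ is an alternating subpath of $C$ with $M$-edges at both ends. So $u_{a'}M(w)$ is exactly the kind of chord you assumed absent.

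Two smaller points: your first switch increases $|M\Delta M'|$ by $2$ when $zz'\in M\cap M'$, and the cases $z\in V(C)$ or $z$ on another cycle are left open. The ``and so on'' thus hides the entire argument, and your bipartite paragraph has the same problems.

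\emph{What is missing is a way to avoid shortening long cycles with $2$-switches directly.} The paper proceeds in three steps:
\begin{itemize}
\item It proves connectivity under $4$-switches when $\delta\ge n/2+1$ (Theorem~\ref{thm:4Switch}).
\item It realises each $4$-switch by $3$-switches (Lemma~\ref{lem:3Switch}).
\item It then only needs to decompose a single alternating $6$-cycle $u_1\dots u_6$ into $2$-switches (Lemma~\ref{lem:2Switch}).
\end{itemize}
In that last step, if there is no even chord, each antipodal pair $u_i,u_{i+3}$ is non-adjacent, so $\deg(u_i)+\deg(u_{i+3})>4n/3$. Hence more than $n/3-2$ edges of $M\cap M'$ have one end adjacent to $u_i$ and the other adjacent to $u_{i+3}$. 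Pigeonhole over these sets gives $xy\in M\cap M'$ with $x$ adjacent to $u_1,u_5$ and $y$ adjacent to $u_2,u_4$. That is four adjacencies, not your three. The $2$-switches $u_1u_2yx$, $yu_2u_3u_4$, $u_1xu_5u_6$, $yu_4u_5x$ then carry $M$ to $M'$.

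This is also where the threshold $2n/3$ comes from: degree sums of non-adjacent antipodal pairs, and three sets each of size exceeding $n/3$. It does not come from a triple intersection of neighbourhoods.
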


\begin{theorem}\label{thm:3SwitchMatching}
  Let $G$ be an $n$-vertex graph with $\delta(G) \ge n/2+2$ (or a balanced
  bipartite graph on $2n$ vertices, with $\delta(G) \ge \lfloor n/2\rfloor+1$). All perfect
  matchings of $G$ are equivalent via $3$-switches.
\end{theorem}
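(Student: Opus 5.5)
Given two perfect matchings $M,M'$ of $G$, we induct on $|M\Delta M'|$: it suffices to find a perfect matching $M''$ with $M\sim M''$ via $3$-switches and $|M''\Delta M'|<|M\Delta M'|$. The symmetric difference is a disjoint union of alternating cycles, so we may fix one alternating cycle $C=v_1v_2\dots v_{2\ell}$ with $v_{2i-1}v_{2i}\in M$ and $v_{2i}v_{2i+1}\in M'$. If $\ell\le 3$, switching $C$ is itself a $3$-switch and we are done. So assume $\ell\ge 4$. The goal is to shorten $C$ using a bounded switch: find a chord $v_av_b$ of $C$ that is an \emph{even chord} and such that the $M$-edges around it allow a $2$- or $3$-switch producing a matching $M''$ that agrees with $M'$ on more edges (or splits $C$ into shorter alternating cycles with $M'$). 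More flexibly, we also allow switches using an $M$-edge $xy$ outside $C$ (an edge of $M\cap M'$ or of another cycle), e.g.\ replacing $v_1v_2, xy$ by $v_1x, v_2y$-type configurations, as long as the total symmetric difference with $M'$ does not increase and some measure (number of edges of $M'$ used, or length of the longest cycle) strictly improves.

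The core counting step: take the consecutive vertices $v_1,v_2,v_3$ (with $v_1v_2\in M$, $v_2v_3\in M'$) and $v_4$ with $v_3v_4\in M$. We want $w$-pair $xy\in M$ with $v_1\sim x$ and $v_4\sim y$ (an edge of $M$ ``bridging'' $v_1$ and $v_4$). Then the $3$-switch removing $v_1v_2,v_3v_4,xy$ and adding $v_2v_3, v_1x, v_4y$ produces a perfect matching $M''$ containing the extra $M'$-edge $v_2v_3$. As in the proof of Lemma~\ref{lem:near-perfect}, define $A$ the set of vertices matched by $M$ to neighbours of $v_4$ and $B=N(v_1)$, both inside $V\setminus\{v_1,v_4\}$ roughly; with $\delta\ge n/2+2$ we get $|A|+|B|\ge n+4 > n-2+$ (a few excluded vertices), so pigeonhole gives several choices of $xy$, and we have slack of about $4$--$6$ to exclude bad choices: $xy\in\{v_1v_2,v_3v_4\}$ and orientation issues. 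One must then check that $|M''\Delta M'|$ does not grow: the edges $v_1x,v_4y$ may be new non-$M'$ edges while $xy$ may have been in $M'$. So I'd rather use the potential $|M\cap M'|$ or a lexicographic potential and argue it increases, possibly with a second switch fixing $x,y$.

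The main obstacle is exactly this bookkeeping: ensuring progress. My first attempt is to choose $xy$ preferentially on $C$ itself, making $v_1x$ or $v_4y$ a chord, so that the new symmetric difference with $M'$ decomposes into strictly shorter alternating cycles (total length not increased, longest cycle decreased); when $xy\in M\cap M'$ the switch creates a new cycle of length $6$, which is immediately resolved by one more $3$-switch, netting a shortening of $C$ by $2$. For the bipartite case with $\delta\ge\lfloor n/2\rfloor+1$, the same pigeonhole is run inside one side (as in the footnote of Lemma~\ref{lem:near-perfect}), where $v_1,v_4$ lie in opposite parts, and $|A|+|B|\ge n+1>n$ gives the bridge; bipartiteness automatically keeps all created cycles even, simplifying the parity issues.
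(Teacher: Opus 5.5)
Your induction framework is fine, but the progress step does not work, and the obstacle is essential rather than bookkeeping. The pigeonhole count only gives \emph{some} $xy\in M$ with $x\sim v_1$, $y\sim v_4$. Since typically almost all of $M$ lies in $M\cap M'$, you must handle $xy\in M\cap M'$. In that case your $3$-switch on $v_1v_2v_3v_4yx$ gains $v_2v_3$ but loses $xy$. The new symmetric difference $M''\Delta M'$ then contains the single alternating cycle $v_1xyv_4v_5\cdots v_{2\ell}$, of the same length $2\ell$ as $C$. No $6$-cycle is created: $v_2,v_3$ have merely been replaced by $x,y$, so there is nothing for a second $3$-switch to resolve. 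Nor can you force $xy$ to lie on $C$.

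In fact no monotone strategy can work. Take $G=K_n$ minus the chords of an $8$-cycle $C$ ($n\ge 16$, so $\delta(G)\ge n/2+2$), and let $M\Delta M'=C$. A switch of at most three edges applied to $M$ can add an $M'$-edge of $C$ only if it removes both adjacent $M$-edges of $C$. Without chords, it can gain one such edge only by also removing an edge of $M\cap M'$. Hence it never adds more edges of $M'$ than it removes, and no single $3$-switch decreases $|M\Delta M'|$. A valid proof must pass through matchings farther from $M'$, and constructing such a detour is the real content of the theorem.

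The paper does this in two steps. First, Theorem~\ref{thm:4Switch} runs essentially your bridging argument, but with a $4$-switch on $u_1u_2\cdots u_6xy$. This gains the two $M'$-edges $u_2u_3,u_4u_5$ and loses at most $xy$, so it makes progress already under $\delta\ge n/2+1$. Second, Lemma~\ref{lem:3Switch} simulates a single $4$-switch, i.e.\ an alternating $8$-cycle $u_1\cdots u_8$, by $3$-switches. An even chord splits the cycle into a $4$-cycle and a $6$-cycle. Otherwise one finds $xy\in M\cap M'$ with $y\sim u_1$ and $x\sim u_2,u_6$ (up to relabelling). Then the $2$-switch on $u_1u_2xy$ (which goes uphill, destroying $xy$), followed by the $3$-switches on $xu_2u_3u_4u_5u_6$ and on $xu_6u_7u_8u_1y$, turns $M$ into $M'$.

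An edge adjacent to three prescribed vertices of $C$ is beyond a two-set pigeonhole. The paper sums $|N(u_i)\setminus V(C)|$ over all eight $u_i$, using $\deg(u_i)+\deg(u_{i+3})\ge n+3$ (as $u_iu_{i+3}$ would be an even chord). This yields an edge of $M\cap M'$ with more than eight incidences to $C$, and a discharging argument then extracts the required pattern. This is exactly where $n/2+2$ rather than $n/2+1$ is needed. In the bipartite case the same count gives an edge of $M\cap M'$ adjacent to at least five vertices of $C$.
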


To do so, we first prove in Subsection~\ref{ssec:4switch} that perfect matchings are
connected by $4$-switches if $\delta \ge n/2+1$, before proving in
Subsection~\ref{ssec:3switch} that any $4$-switch can be realised by a sequence of $O(1)$
$3$-switches if $\delta \ge n/2+2$, and in Subsection~\ref{ssec:2switch} that any $3$-switch can be realised by a sequence of $O(1)$
$2$-switches if $\delta \ge \lfloor2n/3\rfloor+1$, which proves
Theorem~\ref{thm:3SwitchMatching} and Theorem~\ref{thm:2SwitchMatching}, respectively.

\subsection{Equivalence via 4-switches}\label{ssec:4switch}
We first prove that perfect matchings are connected by $4$-switches if $\delta
\ge n/2+1$.

\begin{theorem}\label{thm:4Switch}
  Let $\gamma \in (0,1]$. Let $G$ be an $n$-vertex graph in which each pair of
  non-adjacent vertices have degrees that sum to at least $\gamma n +
  1$. Alternatively, let $G$ be a balanced bipartite graph on $2n$ vertices, in
  which each pair of non-adjacent vertices in different halves of the bipartition
  have their degrees that sum up to at least $\gamma n$. All
  $\gamma n$-matchings of $G$ are equivalent via $4$-switches.
\end{theorem}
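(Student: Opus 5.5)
We show that any two $\gamma n$-matchings $M, M'$ are joined by a path in $\mathcal H_4(G,\gamma)$, by induction on $|M \Delta M'|$. Each step produces a matching $M''$ with $|M''\Delta M| \le 8$ and $|M''\Delta M'| < |M\Delta M'|$. The components of $M\Delta M'$ are alternating cycles and alternating paths. If some component is small enough that switching it alone is a $4$-switch, we switch it and are done. The same holds for an even alternating path, which keeps the matching size, provided it has at most $4$ edges of each colour. So the real work is on a long alternating cycle $C$ of length $2\ell \ge 10$, or a long alternating path. Paths of odd length come in pairs of opposite parity, since $|M|=|M'|$, and can be handled jointly or by the same shortcutting argument.

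The key step is to find a chord that shortcuts a long cycle. Let $C = v_1 v_2 \dots v_{2\ell}$ with $v_{2i-1}v_{2i}\in M'$ and $v_{2i}v_{2i+1}\in M$. We look for an even chord $xy$ of $C$, meaning $x,y$ are at odd distance along $C$, such that $C$ splits into two even cycles $C_1, C_2$ through $xy$. Set $M'' = M \Delta C_1$ where $C_1$ is $M$-alternating and of length at most $8$. Then $M''$ agrees with $M'$ on $C_1\setminus\{xy\}$, and $C_2$ becomes an $M''/M'$-alternating cycle strictly shorter than $C$, so the distance decreases. It would be cleaner still to use a chord close to an endpoint. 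The plan is to take a vertex $u = v_1$ and its $M$-partner $w$, and use the degree-sum condition on a suitable non-adjacent pair, exactly as in the pigeonhole argument of Lemma~\ref{lem:near-perfect}. Let $A$ be the set of vertices whose $M$-partner is a neighbour of $w$, and let $B = N(u)$. Then $|A| + |B|$ exceeds the number of matched vertices plus unmatched ones. Hence we find an $M$-edge $xy$ with $ux$ and $wy$ edges of $G$, or an unmatched neighbour.

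This gives a $2$- or $3$-switch rotating $u$'s edge. The switch brings $u$ into agreement with $M'$: we replace $M$ locally so that the $M'$-edge at $u$ is added. If $x,y$ lie off $C$, or are unmatched, this still reduces the distance locally, while perhaps lengthening another component by a bounded amount. To make progress monotone we will instead measure $|M\Delta M'|$ after two consecutive such operations combined into one $4$-switch. First add the $M'$-edge $v_1v_2$ (so $v_2$'s and $v_1$'s $M$-partners, $v_3$ and $v_{2\ell}$, become free). Then rematch the freed pair $v_3, v_{2\ell}$, either directly if adjacent or via one $M$-edge $xy$ with $v_3x, v_{2\ell}y\in E(G)$. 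The pigeonhole count for the non-adjacent pair $(v_3,v_{2\ell})$ uses $\deg v_3 + \deg v_{2\ell} \ge \gamma n+1$ against the roughly $\gamma n - 2$ relevant vertices. The slack must also cover the unmatched vertices, which can serve as free endpoints and turn the rematching into an augment-type move. In the bipartite case the same count runs over one side with sum $\ge \gamma n$.

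The resulting move removes $v_2v_3$, $v_{2\ell}v_1$ and possibly $xy$, and adds $v_1v_2$ and either $v_3v_{2\ell}$ or $v_3x, v_{2\ell}y$, so it is at most a $3$-switch. The symmetric difference with $M'$ then loses the edges $v_1v_2, v_2v_3, v_{2\ell}v_1$ but may gain $v_3x$ and $v_{2\ell}y$ while losing $xy$'s status. The main obstacle is precisely this accounting: when $xy$ is an $M\cap M'$ edge, the distance can fail to drop. To handle it, I would choose $xy$ preferentially in $M\setminus M'$, ideally on $C$ itself, where the new edges create chords, or allow a second step. The combined $4$-switch budget is meant to absorb one such bad step. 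The case analysis showing that a potential of the form $|M\Delta M'|$ (possibly refined by the number of components) strictly decreases within at most a $4$-switch is the part I expect to require the most care.
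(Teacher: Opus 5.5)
There is a genuine gap, and it is the one you flag yourself. The move you actually construct deletes the $M$-edges $v_{2\ell}v_1$ and $v_2v_3$ (and $xy$), and inserts $v_1v_2$, $v_3x$ and $v_{2\ell}y$. This is a $3$-switch that gains only one $M'$-edge. In $M\Delta M'$ it trades the three-edge stretch $v_{2\ell}v_1v_2v_3$ for the three-edge stretch $v_{2\ell}yxv_3$, so whenever the pigeonhole edge $xy$ lies in $M\cap M'$ the distance does not decrease. You cannot steer the pigeonhole away from $M\cap M'$: it only guarantees that some $M$-edge works, and typically most $M$-edges are in $M\cap M'$. Nothing in the proposal shows that repeating such neutral moves terminates. ``Choose $xy$ preferentially in $M\setminus M'$'' and ``allow a second step'' are hopes, not arguments. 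There are also two smaller problems. Your first pigeonhole attempt applies the degree-sum hypothesis to $v_1$ and its $M$-partner, which are adjacent, so the hypothesis says nothing about them. And the even-chord shortcut relies on a chord that need not exist.

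The missing idea is to spend the fourth switched edge on a second $M'$-edge. Cycles of length at most $8$ are switched outright. Otherwise take an alternating subpath $u_1u_2\dots u_6$ with $u_1u_2,u_3u_4,u_5u_6\in M$. If $u_1u_4$, $u_3u_6$ or $u_1u_6$ is an edge of $G$, switching the resulting alternating $4$- or $6$-cycle already lowers the distance. Otherwise $\deg(u_1)+\deg(u_6)\ge n+1$. Let $V'=V(G)\setminus\{u_1,\dots,u_6\}$, let $A$ be the set of $v\in V'$ whose $M$-partner lies in $N(u_1)$, and let $B=N(u_6)\cap V'$. The non-adjacencies give $|A|\ge\deg(u_1)-3$ and $|B|\ge\deg(u_6)-3$, hence $|A|+|B|\ge n-5>n-6=|V'|$. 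Without first disposing of $u_1u_4$ and $u_3u_6$ you would only get $n-7$. This produces $xy\in M$ off the subpath with $u_6x,u_1y\in E(G)$. The $4$-switch on the $8$-cycle $u_1u_2u_3u_4u_5u_6xy$ replaces the five-edge stretch $u_1\dots u_6$ of the symmetric difference by, at worst, the three-edge stretch $u_1yxu_6$. So $|M\Delta M'|$ drops by at least $2$ whether or not $xy\in M'$.

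For $\gamma n$-matchings your treatment of paths is likewise only a sketch. You must first use an $M$-unmatched neighbour of $u_1$ or $u_6$ for a direct $3$-switch. Then run the same count over the $\gamma n-6$ $M$-matched vertices of $V'$. Finally, pair off leftover single-edge and length-$3$ odd paths of opposite type.
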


\begin{proof}
  We distinguish the case of perfect matchings from that of $\gamma n$-matchings with
  $\gamma<1$, the latter one being slightly more complex because of the short
  alternating paths. 
  \paragraph{Perfect matchings}
  Let $M$ and $M'$ be two perfect matchings of $G$. We proceed by induction on
  $|M \Delta M'|$  by proving that we can always reduce the size of the
  symmetric difference by performing a $4$-switch in $M$ or in $M'$.
  
  If there exists $C=u_1 \dots u_{2p}$ an alternating subpath of $M \Delta M'$ with
  $u_{2i-1}u_{2i} \in M$ and $u_{2i}u_{2i+1} \in M'$, such that
  $p \in \{2,3,4\}$ and $u_1u_{2p} \in E(G)$, then a $4$-switch in $M$ on
  $u_1\dots u_{2p}$ results in a perfect matching $N$ such that
  $|N \Delta M'|\le |M\Delta M'| - p + 1$. Thus one can assume that no such $C$
  exists.

  Let $C = u_1 \dots u_6$ be an alternating subpath of $M\Delta M'$ with
  $u_{2i-1}u_{2i} \in M$ and $u_{2i}u_{2i+1} \in M'$. By assumption, $u_1$ is
  neither adjacent to $u_4$ nor to $u_6$, and $u_6$ is not adjacent to
  $u_3$. Let $V' = V(G) \setminus\{u_1, \dots u_6\}$. Let $A$ be the subset of
  $V'$ composed of vertices matched in $M$ to a neighbour of $u_1$. The only
  vertices of $\{u_1, \dots u_6\}$ that can be matched in $M$ to neighbours of
  $u_1$ are $u_1$, $u_4$ and $u_6$. Therefore, $|A| \ge \deg(u_1)-3$. Letting
  $B = N(u_6) \cap V'$, we also have $|B| \ge \deg(u_6) -3$. By the pigeonhole
  principle\footnote{If $G$ is a balanced bipartite graph on $2n$ vertices, let
    $V' = V_1 \setminus \{u_1, u_3, u_5\}$, where $V_1$ is the half of the
    bipartition that contains $u_1$. We now have $|A| = \deg (u_1) -1$ and
    $|B| = \deg(u_6) -1$, so $|A| + |B| \ge n-2 > |V'|$.}, since $V'$ has size $n-6$
  and is a superset of $A$ and $B$ and $|A|+|B| \ge n-5$, there exists a vertex
  $x$ in $A\cap B$. This vertex $x$ is by definition adjacent to $u_6$, and
  matched to a vertex $y$ in $M$, itself adjacent to $u_1$ (see
  Figure~\ref{fig:4Switch}). Performing the $4$-switch $u_1 \dots u_6xy$ on $M$ results
  in a perfect matching $N$ containing $u_2u_3$ and $u_4u_5$, so
  $|N \Delta M'| \le |M\Delta M'| - 1$.
  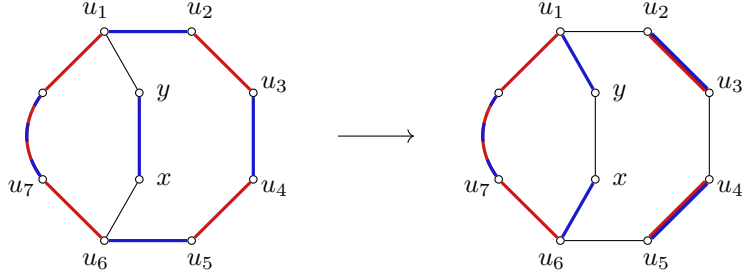
\begin{figure}[ht!]
    \centering        
    \begin{tikzpicture}
      \foreach \i in {1,...,7}{
        \node[circle, draw = black, inner sep = 1pt] (u\i) at
        ($(157.5-\i*45:1.5cm)$) {};
        \node at ($(157.5-\i*45:1.8cm)$) {\small $u_\i$};
      }
      \node[circle, draw = black, inner sep = 1pt] (u8) at (157.5:1.5cm) {};
      %\node at (157.5:1.8cm) {\small $u_{2p}$};

      \foreach \i/\j in {1/2,3/4,5/6}{
        \draw[very thick,color3] (u\i) -- (u\j);
      }
      \foreach \i/\j in {2/3,4/5,6/7}{
        \draw[very thick,color1] (u\i) -- (u\j);
      }
      \draw[color1,very thick] (u8) -- (u1);
      \draw[color1, very thick, postaction={draw, color3, dashone,very thick}] (u7)
      to[out=125,in=-125] (u8);
      
      \node [circle, draw = black, inner sep = 1pt] (y) at ($(22.5:1.5cm)+(-1.5,0)$) {};
      \node [circle, draw = black, inner sep = 1pt] (x) at ($(-22.5:1.5cm)+(-1.5,0)$) {};
      \node[right=1pt of y] {\small $y$}; 
      \node[right=1pt of x] {\small $x$};

      \draw[very thick,color3] (x) -- (y);
      \draw (u1) -- (y) (u6) -- (x);

      \draw[->] (2.5,0) -- (3.5,0);
      \begin{scope}[shift={(6,0)}]
      \foreach \i in {1,...,7}{
        \node[circle, draw = black, inner sep = 1pt] (u\i) at
        ($(157.5-\i*45:1.5cm)$) {};
        \node at ($(157.5-\i*45:1.8cm)$) {\small $u_\i$};
      }
      \node[circle, draw = black, inner sep = 1pt] (u8) at (157.5:1.5cm) {};

      \foreach \i/\j in {1/2,3/4,5/6}{
        \draw (u\i) -- (u\j);
      }
      
      \draw[very thick,color1] (u6) -- (u7)  (u8) -- (u1);
      \draw[color1, very thick, postaction={draw, color3, dashone,very thick}] (u7)
      to[out=125,in=-125] (u8);

      \foreach \i/\j in {2/3,4/5}{
        \draw[very thick,side by side=color3:color1] (u\i) -- (u\j);
      }

      \node [circle, draw = black, inner sep = 1pt] (y) at ($(22.5:1.5cm)+(-1.5,0)$) {};
      \node [circle, draw = black, inner sep = 1pt] (x) at ($(-22.5:1.5cm)+(-1.5,0)$) {};
      \node[right=1pt of y] {\small $y$}; 
      \node[right=1pt of x] {\small $x$};

      \draw (x) -- (y);
      \draw[very thick,color3] (u1) -- (y) (u6) -- (x);
    \end{scope}
    \end{tikzpicture}
    \caption{The reconfiguration sequence of Theorem~\ref{thm:4Switch}. $M$ is
      represented in blue and $M'$ in red.}\label{fig:4Switch}
  \end{figure}
  
  \paragraph{$\gamma n$-matchings}
  Let $M$ and $M'$ be two $\gamma n$-matchings of $G$. We proceed by induction
  on $|M \Delta M'|$ by proving that we can always reduce the size of the
  symmetric difference by performing a $4$-switch in $M$ or in $M'$. We handle
  separately the following cases (in that order): isolated edges of
  $M \Delta M'$, even alternating paths of length at most 6, odd alternating
  subpaths of length 3 or 5 with an edge between their extremities, odd
  alternating subpaths of length 5 with no edges between their extremities and a
  vertex at distance 3 or 5 in the subpath, and odd alternating paths of length
  3 with no edge between their extremities. It is straightforward to check that
  these cases are exhaustive.

  If there exists an edge $xy$ of $M$ such that $x$ and $y$ are both unmatched
  in $M'$, then removing any edge from $M' \setminus M$ and replacing it by $xy$
  yields a $\gamma n$-matching $N$ such that $|N\Delta M| < |M' \Delta M|$. Thus
  one can assume that all the edges of $M$ are incident to edges of $M'$ (and
  likewise that all edges of $M'$ are incident to edges of $M$). Hence, we can
  assume that alternating paths have length at least two.
  
  If there exists an even alternating path $u_1, \dots u_{2p+1}$ with
  $p \in [3]$, such that $u_{2i-1}u_{2i} \in M$ and $u_{2i}u_{2i+1} \in M'$,
  with $u_1$ unmatched in $M'$ and $u_{2p+1}$ unmatched in $M$, then performing
  the $4$-switch on the path $u_1\dots u_{2p+1}$ results in a $\gamma n$-matching $N$ with
  $|N \Delta M'| < |M\Delta M'|$. Thus one can assume $M \Delta M'$ contains no
  even alternating paths of length at most 6.

  Similarly to the perfect matching case, if there exists an alternating subpath
  $u_1, \dots u_{2p}$ with $p \in \{2,3,4\}$, such that $u_{2i-1}u_{2i} \in M$
  and $u_{2i}u_{2i+1} \in M'$, and $u_1u_{2p} \in E(G)$, then performing in $M$ the
  $4$-switch on the cycle $u_1\dots u_{2p}$ results in a $\gamma n$-matching $N$ with
  $|N \Delta M'| < |M\Delta M'|$. Thus one can assume that $M \Delta M'$
  contains no odd alternating subpaths of length 3 or 5 with an edge between
  their extremities.

  Assume that there exists an odd alternating subpath $P=u_1 \dots u_{6}$ of
  length 5 in $M \Delta M'$ with $u_{2i-1}u_{2i} \in M$ and
  $u_{2i}u_{2i+1} \in M'$. By assumption, $u_1$ is neither adjacent to $u_4$,
  nor to $u_6$, and $u_6$ is not adjacent to $u_3$. If $u_1$ has a neighbour $y$
  that is unmatched in $M$, then
  $N = M \cup \{yu_1,u_2u_3, u_4u_5\} \setminus \{u_1u_2,u_3u_4,u_5u_6\}$ is
  a $\gamma n$-matching such that $|N \Delta M'| \le |M \Delta M'|-2$. Likewise,
  if $u_6$ has a neighbour $x$ unmatched in $M$, then
  $N = M \cup \{u_2u_3, u_4u_5, u_6x\} \setminus \{u_1u_2,u_3u_4,u_5u_6\}$
  satisfies the same properties. Thus one can assume that all neighbours of
  $u_1$ and $u_6$ are matched in $M$. Let $V'$ be the subset of vertices of
  $V(G) \setminus \{u_1, \dots u_6\}$ matched in $M$ and $A$ be the subset of
  $V'$ composed of vertices matched in $M$ to a neighbour of $u_1$. The only
  vertices of $\{u_1, \dots u_6\}$ that can be matched in $M$ to neighbours of
  $u_1$ are $u_1$, $u_4$ and $u_6$. Therefore, $|A| \ge \deg(u_1)-3$. Letting
  $B = N(u_6) \cap V'$, we also have $|B| \ge \deg(u_6) -3$. By the pigeonhole
  principle\footnote{If $G$ is a balanced bipartite graph on $2n$ vertices, let
    $V' = V_1 \setminus \{u_1, u_3, u_5\}$, where $V_1$ is the half of the
    bipartition containing $u_1$. We now have $|A| = \deg (u_1) -1$ and
    $|B| = \deg(u_6) -1$, so $|A| + |B| \ge n-2$.}, since $V'$ has size
  $\gamma n -6$ and is a superset of $A$ and $B$ and
  $|A| + |B| \ge \gamma n -4$, there exists a vertex $x$ in $A\cup B$. This
  vertex $x$ is by definition adjacent to $u_6$, and matched to a vertex $y$ in
  $M$, itself adjacent to $u_1$ (see Figure~\ref{fig:4Switch}). Performing the
  $4$-switch $u_1 \dots u_6xy$ on $M$ results in a $\gamma n$-matching $N$
  containing $u_2u_3$ and $u_4u_5$, so $|N \Delta M'| \le |M\Delta M'| - 1$.

  Thus, one can assume that $|M\Delta M'|$ is a disjoint union of alternating
  paths of length 3, each without non-adjacent extremities. Note that exactly
  half of them contain two edges of $M$. Let $P=u_1u_2u_3u_4$ and
  $Q=v_1v_2v_3v_4$ be two alternating paths of $M\Delta M'$, with
  $A=\{u_1u_2, u_3u_4,v_2v_3\} \subseteq M$ and
  $B=\{u_2u_3, v_1u_2,v_3v_4\} \subseteq M'$. Replacing $A$ by $B$ in $M$
  results in a $\gamma n$-matching $N$ with
  $|N\Delta M'| < |N\Delta M'|$.
\end{proof}

\subsection{Equivalence via 3-switches}\label{ssec:3switch}
We now prove Theorem~\ref{thm:3SwitchMatching}, which immediately follows from
Theorem~\ref{thm:4Switch} and the following lemma.

\begin{lemma}\label{lem:3Switch}
  Let $G$ be an $n$-vertex graph in which each pair of non-adjacent vertices have their degrees that sum to at least $n+3$.
  % $\delta(G) \ge n/2+2$
  Alternatively, let $G$ be a balanced bipartite graph on $2n$ vertices, in which each pair of non-adjacent vertices in different halves of the bipartition have their degrees that sum up to at least $n+1$.
  %$(or a balanced bipartite graph on $2n$-vertices, such that $\delta(G) \ge \lfloor (n+2)/2 \rfloor$). 
  Every two perfect matchings of $G$ that differ by one $4$-switch are equivalent via $3$-switches.
\end{lemma}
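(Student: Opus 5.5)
The plan is to reduce to the single case where a $4$-switch genuinely needs four edges, and to handle that case with one auxiliary edge outside the cycle. If $M$ and $M'$ differ by a $4$-switch, then $M\Delta M'$ is one of four things: a single alternating $4$-cycle, a single alternating $6$-cycle, two alternating $4$-cycles, or a single alternating $8$-cycle. In the first three cases each alternating cycle is itself a $2$- or $3$-switch, so we are done by performing them one after the other. So assume $M\Delta M'$ is an alternating $8$-cycle $C=u_1\dots u_8$, with $u_{2i-1}u_{2i}\in M$ and $u_{2i}u_{2i+1}\in M'$ (indices mod $8$).

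\emph{Step 1 (even chords).} Suppose $C$ has an even chord, that is, an edge $u_iu_{i+3}$ (which is the same as $u_iu_{i-5}$). Say the chord is $u_1u_4$. The $2$-switch on $u_1u_2u_3u_4$ replaces $u_1u_2,u_3u_4$ by $u_2u_3,u_1u_4$ and produces a perfect matching $N$. Then $N\Delta M'$ is the alternating $6$-cycle $u_1u_4u_5u_6u_7u_8$, which is a $3$-switch. The other chord positions are symmetric, using either $M$ or $M'$ as the starting side. Hence we may assume $C$ has no even chord. In particular $u_i$ and $u_{i+3}$ are non-adjacent for every $i$, so by hypothesis $\deg(u_i)+\deg(u_{i+3})\ge n+3$ (respectively $\ge n+1$ in the bipartite case).

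\emph{Step 2 (routing through an outside edge).} The aim is an edge $xy\in M\cap M'$ outside $C$ and an intermediate perfect matching $N$ that is a $3$-switch away from $M$ and whose symmetric difference with $M'$ has an even chord, or splits into short cycles. Step 1 then finishes the argument from $N$. The template is as follows. Keep $u_1u_2,u_3u_4$, and replace $u_5u_6,u_7u_8,xy$ by $u_6u_7,u_5x,u_8y$; this requires $x\sim u_5$ and $y\sim u_8$. Then $N\Delta M'$ is the $8$-cycle $u_1u_2u_3u_4u_5xyu_8$. This new cycle has more freedom, because $x$ and $y$ range over many candidates. We need the pair $(x,y)$ to satisfy one extra adjacency that creates an even chord of the new cycle, for instance $u_4\sim y$, or $x\sim u_1$, or $xy$ closing a $4$-cycle with $u_8,u_1$.

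The existence of such $x,y$ comes from pigeonhole, exactly as in the proofs of Theorem~\ref{thm:4Switch} and Lemma~\ref{lem:near-perfect}. Let $V'=V(G)\setminus V(C)$, which has $n-8$ vertices. Let $A$ be the set of vertices of $V'$ that are $M$-matched to a neighbour of one endpoint, and $B$ the set of neighbours in $V'$ of the other endpoint. Each endpoint loses at most $3$ or $4$ neighbours on $C$, since it has no even chords. So $|A|+|B|\ge n+3-8>n-8$, and in fact the surplus is at least $2$. I plan to spend this extra slack of the $n+3$ hypothesis, compared with $n+1$ in Theorem~\ref{thm:4Switch}, on the extra adjacency. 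Concretely, the steps are: first find $xy\in M$ with $x\sim u_5$ and $y\sim u_8$; then, if the resulting cycle has no even chord, apply the same counting to the pair $(u_4,y)$ or $(x,u_1)$, which must then be non-adjacent and hence have degree sum at least $n+3$. This yields a second outside edge. The result should be a bounded sequence of $3$-switches in which each step either creates an even chord or shortens the alternating structure.

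\emph{Main obstacle.} The hard step is Step 2: choosing the pairs of vertices whose non-adjacency gives the degree-sum bound, so that the resulting $3$-switch strictly makes progress and does not just produce another chordless $8$-cycle. I would first try to double-count $M$-edges $xy$ of $V'$ against the four pairs $(u_i,u_{i+3})$ simultaneously. Summing their degree conditions gives $4(n+3)$ against roughly $4(n-8)$ for the available slots, so some edge $xy$ should receive at least three of the required adjacencies. I would then check, case by case, that three adjacencies always allow a decomposition into a $3$-switch followed by a $3$-switch or $2$-switches. The bipartite case follows the same scheme with $V'=V_1\setminus\{u_1,u_3,u_5,u_7\}$, as in the footnotes above.
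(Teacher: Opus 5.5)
Your reductions are correct: the four possible shapes of $M\Delta M'$, the even-chord step, and the template of Step~2 all work. Indeed, if $xy\in M\cap M'$ with $x\sim u_5$ and $y\sim u_8$, the $3$-switch on $u_5u_6u_7u_8yx$ gives $N$ with $N\Delta M'$ equal to the $8$-cycle $u_1u_2u_3u_4u_5xyu_8$. Its only possible even chords are $xu_1$, $xu_3$, $yu_2$, $yu_4$, and any one of them lets Step~1 finish with two more switches.

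However, the heart of the lemma is not established: you never show that some edge $xy$ carries this pattern together with one extra adjacency. The counting you sketch does not give it.
\begin{itemize}
\item In the non-bipartite case, a vertex $u_i$ of $C$ can still have five neighbours on $C$, namely $u_{i\pm1},u_{i\pm2},u_{i+4}$. So for the single pair $(u_5,u_8)$ you get $|A|+|B|\ge n-7$ against $|V'|=n-8$. The surplus is exactly $1$, not $2$, and pigeonhole yields $x\sim u_5$, $y\sim u_8$ and nothing more.
\item Re-applying the count to $(u_4,y)$ or $(x,u_1)$ just produces another $8$-cycle without even chords. No quantity is guaranteed to improve, so that iteration need not terminate.
\item Your global heuristic ($4(n+3)$ against $4(n-8)$) omits the up to $40$ adjacencies inside $C$. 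The true bound is $\sum_{i\in[8]}|N(u_i)\cap V'|\ge 4(n-7)=8\cdot\frac{n-8}{2}+4$, a total surplus of only $4$.
\end{itemize}

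What is missing is a structural argument that turns this small surplus into your pattern. Averaging gives some $xy\in M\cap M'$ with at least $9$ of the $16$ possible adjacencies between $\{x,y\}$ and $V(C)$. In the bipartite case it gives an edge seeing at least $5$ vertices of $C$. You must then show that this forces one endpoint to be adjacent to an antipodal pair $u_j,u_{j+4}$, and the other endpoint to a cycle-neighbour of $u_j$ or $u_{j+4}$. The paper does this by a discharging argument in the general case, and by pigeonhole over the four antipodal pairs in the bipartite case.

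Up to the dihedral symmetry of $C$ and exchanging $M$ with $M'$, that configuration is your template with $x\sim u_1,u_5$ and $y\sim u_8$. So once this step is supplied, your $3$-switch followed by Step~1 completes the proof; the paper instead goes directly through three $3$-switches using $xy$. Without it, Step~2 remains a plan rather than a proof, and it is exactly where the hypothesis $n+3$ (respectively $n+1$) is used.
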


\begin{proof}
  Let $M$ and $M'$ be two perfect matchings of $G$ such that $M \Delta M'$ is a
  8-cycle $C = u_1\dots u_8$, with $u_{2i-1}u_{2i} \in M$ and
  $u_{2i}u_{2i+1} \in M'$. If $C$ admits an even chords, say $u_iu_j$ with
  $i < j$ and $i$ even, then $M'$ can be obtained from $M$ after a switch on
  $u_j\dots u_8u_1 \dots u_i$, followed by a switch on $u_i\dots u_j$. One of
  these cycles has length 4 and the other length 6, so both are $6$-switches.
  
  Hence one can assume that $C$ has no even chord. 
  We first prove in
  Claim~\ref{cl:3Switch} that the existence of an edge of $M\cap M'$ with certain
  adjacencies implies the desired reconfiguration sequence, and secondly that
  such an edge exists.

  \begin{claim}\label{cl:3Switch}
    Let $xy$ be an edge of $M \cap M'$ such that $y$ is a neighbour of
    $u_1$ and $x$ of $u_2$ and $u_6$. The matchings $M$ and $M'$ are equivalent
    via a sequence of at most three $3$-switches. 
  \end{claim}
  \begin{poc}
    Starting from $M$, the sequence
    of $3$-switches $u_1u_2xy$, $yu_2\dots u_6$ and $yu_6u_7u_8u_1x$ results in
    $M'$ (see Figure~\ref{fig:3Switch}).
    \begin{figure}[ht!]
      \centering        
      \begin{tikzpicture}
        \node[circle, inner sep=1.1cm] (A) at (0,0) {};

        \foreach \i in {1,...,8}{
          \node[circle, draw = black, inner sep = 1pt] (u\i) at
          ($(157.5-\i*45:1cm)$) {};
          \node at ($(157.5-\i*45:1.3cm)$) {\small $u_\i$};
        }

        \foreach \i/\j in {1/2,3/4,5/6,7/8}{
          \draw[very thick,color3] (u\i) -- (u\j);
        }
        \foreach \i/\j in {2/3,4/5,6/7,8/1}{
          \draw[very thick,color1] (u\i) -- (u\j);
        }
        
        \node [circle, draw = black, inner sep = 1pt] (y) at ($(22.5:1cm)+(-1,0)$) {};
        \node [circle, draw = black, inner sep = 1pt] (x) at ($(-22.5:1cm)+(-1,0)$) {};
        \node[left=1pt of y] {\small $y$}; 
        \node[right=1pt of x] {\small $x$};

        \draw[very thick,side by side=color1:color3] (x) -- (y);
        \draw (u1) -- (y)  (u6) -- (x) edge[bend right] (u2);

        \begin{scope}[shift={(4,0)}]
          \node[circle, inner sep=1.1cm] (B) at (0,0) {};

          \foreach \i in {1,...,8}{
            \node[circle, draw = black, inner sep = 1pt] (u\i) at
            ($(157.5-\i*45:1cm)$) {};
            \node at ($(157.5-\i*45:1.3cm)$) {\small $u_\i$};
          }

          \foreach \i/\j in {3/4,5/6,7/8}{
            \draw[very thick,color3] (u\i) -- (u\j);
          }
          \draw (u1) -- (u2);
          \foreach \i/\j in {2/3,4/5,6/7,8/1}{
            \draw[very thick,color1] (u\i) -- (u\j);
          }
          
          \node [circle, draw = black, inner sep = 1pt] (y) at ($(22.5:1cm)+(-1,0)$) {};
          \node [circle, draw = black, inner sep = 1pt] (x) at ($(-22.5:1cm)+(-1,0)$) {};
          \node[left=1pt of y] {\small $y$}; 
          \node[right=1pt of x] {\small $x$};

          \draw[very thick, color1] (x) -- (y);
          \draw[very thick, color3] (u1) -- (y)  (x) edge[bend right] (u2);
          \draw (u6) -- (x);
        \end{scope}
        
        \begin{scope}[shift={(8,0)}]
          \node[circle, inner sep=1.1cm] (C) at (0,0) {};

          \foreach \i in {1,...,8}{
            \node[circle, draw = black, inner sep = 1pt] (u\i) at
            ($(157.5-\i*45:1cm)$) {};
            \node at ($(157.5-\i*45:1.3cm)$) {\small $u_\i$};
          }

          \foreach \i/\j in {1/2,3/4,5/6}{
            \draw (u\i) -- (u\j);
          }
          \draw[very thick,color3] (u7) -- (u8);
          \foreach \i/\j in {6/7,8/1}{
            \draw[very thick,color1] (u\i) -- (u\j);
          }
          \foreach \i/\j in {2/3,4/5}{
            \draw[very thick,side by side=color3:color1] (u\i) -- (u\j);
          }
          
          \node [circle, draw = black, inner sep = 1pt] (y) at ($(22.5:1cm)+(-1,0)$) {};
          \node [circle, draw = black, inner sep = 1pt] (x) at ($(-22.5:1cm)+(-1,0)$) {};
          \node[left=1pt of y] {\small $y$}; 
          \node[right=1pt of x] {\small $x$};

          \draw[very thick, color1] (x) -- (y);
          \draw[very thick, color3] (u1) -- (y)  (u6) -- (x);
          \draw (x) edge[bend right] (u2);
        \end{scope}
        
        \begin{scope}[shift={(12,0)}]
          \node[circle, inner sep=1.1cm] (D) at (0,0) {};

          \foreach \i in {1,...,8}{
            \node[circle, draw = black, inner sep = 1pt] (u\i) at
            ($(157.5-\i*45:1cm)$) {};
            \node at ($(157.5-\i*45:1.3cm)$) {\small $u_\i$};
          }

          \foreach \i/\j in {1/2,3/4,5/6,7/8}{
            \draw (u\i) -- (u\j);
          }
          \foreach \i/\j in {2/3,4/5,6/7,8/1}{
            \draw[very thick,side by side=color3:color1] (u\i) -- (u\j);
          }
          
          \node [circle, draw = black, inner sep = 1pt] (y) at ($(22.5:1cm)+(-1,0)$) {};
          \node [circle, draw = black, inner sep = 1pt] (x) at ($(-22.5:1cm)+(-1,0)$) {};
          \node[left=1pt of y] {\small $y$}; 
          \node[right=1pt of x] {\small $x$};

          \draw[very thick, side by side=color1:color3] (x) -- (y);
          \draw (u1) -- (y)  (u6) -- (x) edge[bend right] (u2);
        \end{scope}
        \draw[->] (A) -> (B);
        \draw[->] (B) -> (C);
        \draw[->] (C) -> (D);

      \end{tikzpicture}
      \caption{The reconfiguration sequence of Lemma~\ref{lem:3Switch}. $M$ is
        represented in blue and $M'$ in red.}\label{fig:3Switch}
    \end{figure}
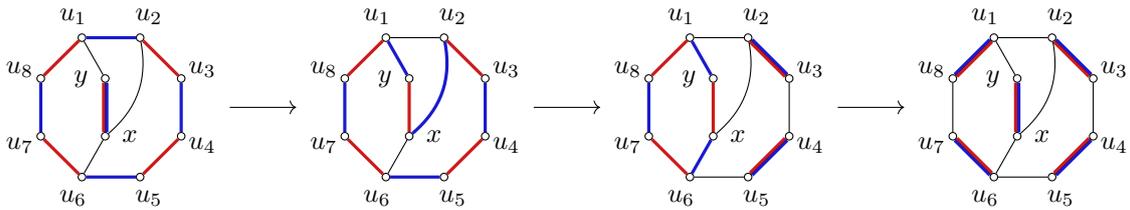
  \end{poc}
  
  \paragraph{Bipartite case}
  For each $i \in [8]$, let $E_i$ be the set of edges
  $xy$ of $M\cap M'$ such that either $x$ or $y$ is adjacent to $u_i$.  We have
  $|E_i| = |N(u_i) \cap (V(G) \setminus V(C))| = |N(u_i) \setminus \{u_{i-1 \pmod
    8}, u_{i+1 \pmod 8}\}|$ because $C$ contains no even chords and $G$ is bipartite. So
  $|E_i| \ge \deg(u_i) - 2$. As $C$ has no even chord, for every $i \in [8]$, $u_i$ and $u_{i+3\pmod 8}$ are non-adjacent and thus $\deg(u_i) + \deg(u_{i+3\pmod 8}) \ge n+1$. Hence, 
  \begin{align*}
      \sum_{i \in [8]} |E_i|  &= \sum_{i \in \{1,3,5,7\}} (\deg(u_i)+\deg(u_{i+3 \pmod 8}) -4)\\
      &\ge 4(n-3) > 4(n -4).
      \end{align*}
  Since $|M \cap M'| = n-4$, a counting
  argument shows that there exists an edge $xy \in M \cap M'$ with endpoints
  adjacent to at least five vertices of $C$. Otherwise, we have
  $\sum_{i \in [8]} |E_i| \le 4|M \cap M'| = 4n -16$, a contradiction.
  
  By the pigeonhole principle, there exists $i \in [4]$
  such that $u_i$ and $u_{i+4}$ are adjacent to some endpoint of $xy$, and since
  $G$ is bipartite, this endpoint is the same for $u_i$ and $u_{i+4}$, say
  $x$. Without loss of generality, say that $i$ is even. As $G$ is bipartite and
  $xy$ is adjacent to at least five vertices of $C$, $y$ is adjacent to at least one
  vertex $u_j$ with $j$ odd. Up to relabelling $i$ and $j$, the premise of
  Claim~\ref{cl:3Switch} is satisfied, which concludes the proof for the bipartite
  case.

  \paragraph{Non-bipartite case}
  For $i \in [8]$, let
  $E_i = \{(x,y) \colon xy \in M\cap M' \text{ and } x \in N(u_i)\}$. Let
  $V' = V(G) \setminus V(C)$.  For each $i \in [8]$, we have
  $|E_i| = |N(u_i) \cap V'| \ge \deg(u_i) -5$ because $C$ has no even
  chords. As $C$ has no even chord, for every $i \in [8]$, $u_i$ and $u_{i+3\pmod 8}$ are non-adjacent and thus $\deg(u_i) + \deg(u_{i+3\pmod 8}) \ge n+3$. Consider the weight function $w$ over all pairs $(x,y)$ such that
  $xy \in M \cap M'$ defined as follows: $w(x,y)$ is the number of indices
  $i \in [8]$ such that $x$ is adjacent to $u_i$ (in particular, note that
  $w(x,y)$ may differ from $w(x,y)$). By double counting, we have
  \begin{align*}
    \sum_{xy \in M\cap M'} w(x,y) + w(y,x) & = \sum_{(x,y) : xy \in M\cap M'}w(x,y)
                                           = \sum_{i \in [8]} |E_i|\\
                                           &\ge \sum_{i \in \{1,3,5,7\}} (\deg(u_i) +\deg(u_{i+3\pmod 8}) - 10)\\
                                           &\ge 4(n-7) > 8(n/2-4).
  \end{align*}
  Since there are $n/2-4$ edges in $M\cap M'$, this implies that there exists
  $xy \in M\cap M'$ with $w(x,y) + w(y,x) > 8$.

  For each $u_i$, let $w'(u_i) = |\{x,y\} \cap N(u_i)|$. We have
  $\sum_{i=1}^8 w'(u_i)= w(x,y) +w(y,x) > 8$. We will shift the weights of $w'$
  towards eight targets $x_1, \dots x_4$ and $y_1, \dots y_4$, to prove that, up
  to relabelling, the adjacencies required by Claim~\ref{cl:3Switch} are satisfied by
  $xy$.

  Consider the following discharging rules:
  \begin{itemize}
  \item For each $i \in [8]$, if $u_i$ is adjacent $x$, then $u_i$ sends weight $1/2$ to
    $x_{i \pmod 4}$, weight $1/4$ to $y_{i+1\pmod 4}$ and weight $1/4$ to
    $y_{i+3\pmod 4}$.
  \item For each $i \in [8]$, if $u_i$ is adjacent $y$, then $u_i$ sends weight $1/2$ to
    $y_{i \pmod 4}$, weight $1/4$ to $x_{i+1\pmod 4}$ and weight $1/4$ to
    $x_{i+3\pmod 4}$.
  \end{itemize}
  After applying the discharging rules, since the total weight of $w'$ is
  greater than 8 and there are eight targets, one of them, say $x_1$ received
  weight greater than 1. Only four vertices (namely $u_i$ for even $i$) could send
  weight 1/4 to $x_1$, hence $x_1$ received a weight of $1/2$ by at least one
  vertex ($u_1$ or $u_5$).

  If $x_1$ received a weight of $1/2$ by exactly one vertex, then it also
  received weight $1/4$ by at least three vertices, thus there exists
  $i \in \{2,4\}$ such that $y$ is adjacent to $u_{i}$ and $u_{i+4}$. Since $x$
  is also adjacent to $u_1$ or $u_5$, the premise of Claim~\ref{cl:3Switch}
  is satisfied up to relabelling.

  If $x_1$ received a weight of $1/2$ by two vertices, then $x$ is adjacent to
  $u_1$ and $u_5$. Since $x_1$ also received a weight of $1/4$ by at least one
  vertex, $y$ is also adjacent to a vertex $u_i$ with $i$ even. Up to
  relabelling, this is again the adjacency pattern required by
  Claim~\ref{cl:3Switch}.
\end{proof}

\subsection{Equivalence via 2-switches}\label{ssec:2switch}
Theorem~\ref{thm:2SwitchMatching} follows from Theorem~\ref{thm:3SwitchMatching} combined with
the following lemma, by observing that $\frac{\lfloor 4n/3\rfloor+1}2 \le \lfloor 2n/3\rfloor+1$.

\begin{lemma}\label{lem:2Switch}
  Let $G$ be an $n$-vertex graph in which each pair of non-adjacent vertices have their degrees that sum to at least $\lfloor 4n/3\rfloor+1$.
  %$\delta(G) \ge \lfloor (2n+3)/3 \rfloor$ 
  Alternatively, let $G$ be a balanced bipartite graph on $2n$ vertices, in which each pair of non-adjacent vertices in different halves of the bipartition have their degrees that sum to at least $\lfloor 4n/3\rfloor+1$.
  % with $\delta(G) \ge \lfloor (2n+3)/3\rfloor$). 
  Every two perfect matchings of $G$ that differ by one $3$-switch are equivalent via $2$-switches.
\end{lemma}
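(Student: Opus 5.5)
Let $M$ and $M'$ be perfect matchings with $M\Delta M'$ a $6$-cycle $C=u_1\dots u_6$, where $u_{2i-1}u_{2i}\in M$ and $u_{2i}u_{2i+1}\in M'$. I will follow the template of Lemma~\ref{lem:3Switch}.

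First dispose of the case where $C$ has an even chord, i.e.\ one of $u_1u_4$, $u_2u_5$, $u_3u_6$. Such a chord splits $C$ into two alternating $4$-cycles. For instance, with $u_1u_4$, first switch in $M$ on $u_1u_2u_3u_4$ to get $u_2u_3,u_1u_4$, then switch on $u_1u_4u_5u_6$. Each step is a $2$-switch, so we may assume $C$ has no even chord. Then each antipodal pair $u_i,u_{i+3}$ is non-adjacent, and hence $\deg(u_i)+\deg(u_{i+3})\ge \lfloor 4n/3\rfloor+1$.

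Next, in analogy with Claim~\ref{cl:3Switch}, I identify an adjacency pattern for an edge $xy\in M\cap M'$ that yields a short sequence of $2$-switches. Suppose $y\sim u_1$ and $x\sim u_2$. The $2$-switch $u_1u_2xy$ replaces $u_1u_2,xy$ by $u_1y,u_2x$. Now $M$ restricted to $C\cup\{x,y\}$ is $\{u_1y,u_2x,u_3u_4,u_5u_6\}$, and the target $M'$ is $\{u_2u_3,u_4u_5,u_6u_1,xy\}$, so the symmetric difference is an $8$-cycle $u_1yxu_2u_3u_4u_5u_6$. To finish with $2$-switches we need chords splitting it into $4$-cycles; for example $x\sim u_3$, or $y\sim u_6$, and so on. So the plan is to show that some $xy\in M\cap M'$ is adjacent (via its two endpoints) to a suitable triple of consecutive cycle vertices, or to two "opposite" configurations. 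Concretely, one good pattern is: $x$ adjacent to $u_2$ and $u_4$, and $y$ adjacent to $u_3$. Then switch $u_3u_4xy\to u_4x,u_3y$; then $u_2u_3yx$-type moves; and so on. I would enumerate the few patterns, each reachable in at most three $2$-switches, and check them by a picture as in Figure~\ref{fig:3Switch}.

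The counting step works as follows. In the bipartite case, $|E_i|\ge \deg(u_i)-1$ (the single $C$-neighbour on the other side, since even chords are excluded), so summing over the three antipodal pairs gives
\[
\sum_i |E_i|\ge 3(\lfloor 4n/3\rfloor-1) > 4(n-3) = 4|M\cap M'|.
\]
Hence some $xy$ is adjacent to at least five of the six $u_i$. Bipartiteness then forces $x$ to see all of $u_2,u_4,u_6$ and $y$ to see two of $u_1,u_3,u_5$ (or vice versa), which contains the needed pattern.

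In the non-bipartite case, use the weights $w(x,y)$ as before, with $|E_i|\ge\deg(u_i)-3$. This gives total weight at least $3(\lfloor4n/3\rfloor+1-6)$, which exceeds $8(n/2-3)=4n-12$. So some edge has $w(x,y)+w(y,x)\ge 9$ out of a possible $12$, and a discharging or pigeonhole argument over the six positions extracts the required pattern.

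The main obstacle is this last step: verifying that every $9$-subset of the $12$ possible adjacencies between $\{x,y\}$ and $C$ contains a good pattern, and matching the floor exactly so that $3\lfloor 4n/3\rfloor$ beats the required bound. If $9$ does not suffice, I would refine the counting by noting that odd chords $u_iu_{i+2}$ of $C$ also reduce the available degree. Alternatively, I would exploit that an odd chord itself already gives a direct $2$-switch route.
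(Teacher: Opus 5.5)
\textbf{The gap: no valid gadget.} Your skeleton matches the paper's: remove even chords, use double counting to find an edge $xy\in M\cap M'$ with many adjacencies to $C$, then apply a local gadget. But the gadget is the heart of the lemma, you never establish one, and the one you commit to is false. Take $x\sim u_2,u_4$ and $y\sim u_3$, with no other adjacencies between $\{x,y\}$ and $C$ and no chords of $C$. On $V(C)\cup\{x,y\}$ there are then exactly four perfect matchings: $M$, $B=\{u_1u_2,u_3y,u_4x,u_5u_6\}$, $A=\{u_6u_1,u_4u_5,u_2x,u_3y\}$, and $M'$. The only $2$-switches among them are $M$ to $B$ and $A$ to $M'$, so $M$ and $M'$ are not $2$-switch equivalent there. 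Your second move on $u_2u_3yx$ is unavailable in $B$, since $u_2x\notin B$.

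Your other suggestions fail as well:
\begin{itemize}
\item The chords $xu_3$ and $yu_6$ join vertices at distance $2$ on the $8$-cycle $u_1yxu_2u_3u_4u_5u_6$, so they split it only into odd cycles.
\item The edge set of an $8$-cycle contains no $4$-cycle. So a gadget that starts by switching $u_1u_2$ with $xy$ needs at least three further $2$-switches, and ``at most three'' in total is not attainable this way.
\end{itemize}
A gadget that works is the paper's Claim~\ref{cl:2Switch}. If $x\sim u_1,u_5$ and $y\sim u_2,u_4$, the $2$-switches on $u_1u_2yx$, $yu_2u_3u_4$, $u_1xu_5u_6$, $yu_4u_5x$ take $M$ to $M'$.

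\textbf{The counting has slips but is salvageable.}
\begin{itemize}
\item Bipartite case: $u_i$ has two neighbours on $C$ ($u_{i\pm1}$), so $|E_i|=\deg(u_i)-2$, not $\deg(u_i)-1$. The sum is still at least $3\lfloor 4n/3\rfloor-9\ge 4n-11>4(n-3)$. An edge seeing five of the $u_i$ contains the pattern above after rotating $C$.
\item Non-bipartite case: $u_i$ may have four neighbours on $C$ (odd chords $u_iu_{i\pm2}$), so the bound is $\deg(u_i)-4$. Also $8(n/2-3)=4n-24$, not $4n-12$, so as written your inequality $3(\lfloor 4n/3\rfloor-5)>4n-12$ is false. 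With correct numbers the total is at least $3\lfloor 4n/3\rfloor-21\ge 4n-23>4n-24$, so some $xy$ has at least $9$ adjacencies to $C$.
\end{itemize}

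Your ``main obstacle'' then resolves at once for the right pattern. Let $X=\{i: x\sim u_i\}$, $Y=\{i: y\sim u_i\}$, and $W=\{i\in X: i+3\in Y\}$. Then $|W|\ge |X|+|Y|-6\ge 3$, so $W$ contains two indices $j,j+2$ modulo $6$. That means $x\sim u_j,u_{j+2}$ and $y\sim u_{j+3},u_{j+5}$, which is the working pattern up to rotation (possibly exchanging the roles of $M$ and $M'$). This is exactly the paper's count giving some $(x,y)$ with $w(x,y)>2$.
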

\begin{proof}
  First note that for every integer $d$, $d \ge \lfloor (4n+3)/3 \rfloor$ if and only if $d > 4n/3$. Thus, every pair of non-adjacent vertices (in different halves of the bipartition if $G$ is bipartite) have their degrees that sums up to more than $4n/3$.
  %First note that $d \ge \lfloor (2n+3)/3 \rfloor$ is equivalent to $d > 2n/3$ for all $d \in \mathbb{N}$.
  Let $M$ and $M'$ be two perfect matchings of $G$ such that $M \Delta M'$ is a
  6-cycle $C = u_1, \dots u_6$ with $u_{2i-1}u_{2i} \in M$ and
  $u_{2i}u_{2i+1} \in M'$. Here again, if $C$ contains an even chord
  $u_iu_{i+3}$ for some $i \in [3]$, then it divides $C$ into two 4-cycles, and
  performing a $2$-switch on each of these cycles transforms $M$ into $M'$. Hence
  one can assume that $C$ has no even chord. We first prove in Claim~\ref{cl:2Switch}
  that the existence of an edge of $M\cap M'$ with certain adjacencies implies
  the desired reconfiguration sequence, and secondly that such an edge exists.

  \begin{claim}\label{cl:2Switch}
    If there exists an edge $xy \in M \cap M'$ with $x$ adjacent to $u_1$ and
    $u_5$, and $y$ to $u_2$ and $u_4$, then $M$ is equivalent to $M'$ via a
    sequence of four $2$-switches. 
  \end{claim}
  \begin{poc}
    Starting from $M$, the sequence of $2$-switches $u_1u_2yx$, $yu_2u_3u_4$,
    $u_1xu_5u_6$ and $yu_4u_5x$ results in $M'$ (see Figure~\ref{fig:2Switch}).
    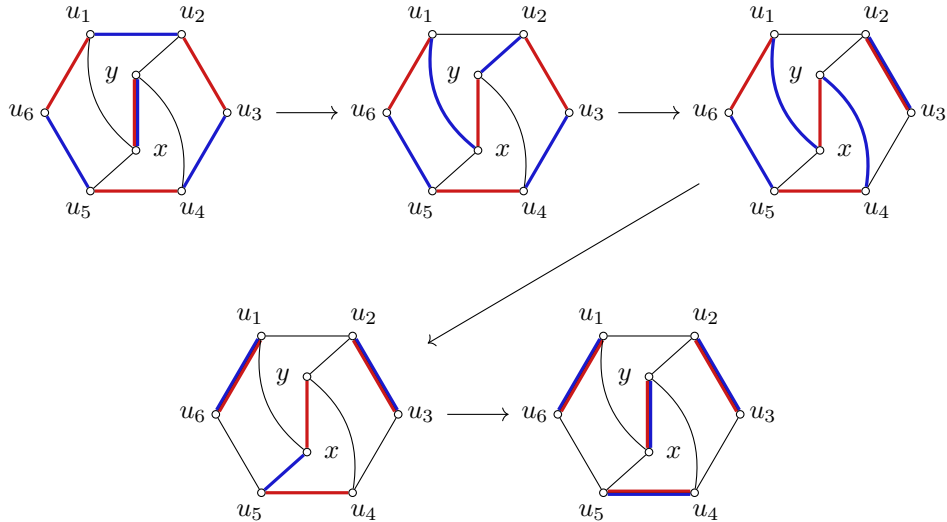
\begin{figure}[ht!]
    \centering        
    \begin{tikzpicture}
      \node[circle, inner sep=1.3cm] (A) at (0,0) {};
      \foreach \i in {1,...,6}{
        \node[circle, draw = black, inner sep = 1pt] (u\i) at
        ($(180-\i*60:1.2cm)$) {};
        \node at ($(180-\i*60:1.5cm)$) {\small $u_\i$};
      }

      \foreach \i/\j in {1/2,3/4,5/6}{
        \draw[very thick,color3] (u\i) -- (u\j);
      }
      \foreach \i/\j in {2/3,4/5,6/1}{
        \draw[very thick,color1] (u\i) -- (u\j);
      }
      
      \node [circle, draw = black, inner sep = 1pt] (y) at (0,.5) {};
      \node [circle, draw = black, inner sep = 1pt] (x) at (0,-.5) {};
      \node[left=1pt of y] {\small $y$}; 
      \node[right=1pt of x] {\small $x$};

      \draw[very thick,side by side=color1:color3] (x) -- (y);
      \draw (u1) edge[bend right] (x);
      \draw (x) -- (u5);
      \draw (u4) edge[bend right] (y);
      \draw (y) -- (u2);

      \begin{scope}[shift={(4.5,0)}]
        \node[circle, inner sep=1.3cm] (B) at (0,0) {};

        \foreach \i in {1,...,6}{
          \node[circle, draw = black, inner sep = 1pt] (u\i) at
          ($(180-\i*60:1.2cm)$) {};
          \node at ($(180-\i*60:1.5cm)$) {\small $u_\i$};
        }

        \foreach \i/\j in {3/4,5/6}{
          \draw[very thick,color3] (u\i) -- (u\j);
        }
        \draw (u1) -- (u2);
        \foreach \i/\j in {2/3,4/5,6/1}{
          \draw[very thick,color1] (u\i) -- (u\j);
        }
        
        \node [circle, draw = black, inner sep = 1pt] (y) at (0,.5) {};
        \node [circle, draw = black, inner sep = 1pt] (x) at (0,-.5) {};
        \node[left=1pt of y] {\small $y$}; 
        \node[right=1pt of x] {\small $x$};

        \draw[very thick,color1] (x) -- (y);
        \draw[very thick,color3] (u1) edge[bend right] (x);
        \draw (x) -- (u5);
        \draw (u4) edge[bend right] (y);
        \draw[very thick, color3] (y) -- (u2);
      \end{scope}

      \begin{scope}[shift={(9,0)}]
        \node[circle, inner sep=1.3cm] (C) at (0,0) {};

        \foreach \i in {1,...,6}{
          \node[circle, draw = black, inner sep = 1pt] (u\i) at
          ($(180-\i*60:1.2cm)$) {};
          \node at ($(180-\i*60:1.5cm)$) {\small $u_\i$};
        }

        \foreach \i/\j in {1/2,3/4}{
          \draw (u\i) -- (u\j);
        }
        \draw[very thick,color3] (u5) -- (u6);
        \foreach \i/\j in {4/5,6/1}{
          \draw[very thick,color1] (u\i) -- (u\j);
        }
        \draw[very thick,side by side=color3:color1] (u2) -- (u3);
        
        \node [circle, draw = black, inner sep = 1pt] (y) at (0,.5) {};
        \node [circle, draw = black, inner sep = 1pt] (x) at (0,-.5) {};
        \node[left=1pt of y] {\small $y$}; 
        \node[right=1pt of x] {\small $x$};

        \draw[very thick,color1] (x) -- (y);
        \draw[very thick,color3] (u1) edge[bend right] (x);
        \draw (x) -- (u5);
        \draw[very thick, color3] (u4) edge[bend right] (y);
        \draw (y) -- (u2);
      \end{scope}

      \begin{scope}[shift={(2.25,-4)}]
        \node[circle, inner sep=1.3cm] (D) at (0,0) {};

        \foreach \i in {1,...,6}{
          \node[circle, draw = black, inner sep = 1pt] (u\i) at
          ($(180-\i*60:1.2cm)$) {};
          \node at ($(180-\i*60:1.5cm)$) {\small $u_\i$};
        }

        \foreach \i/\j in {1/2,3/4,5/6}{
          \draw (u\i) -- (u\j);
        }
        \foreach \i/\j in {2/3,6/1}{
          \draw[very thick,side by side=color3:color1] (u\i) -- (u\j);
        }
        \draw[very thick,color1] (u4) -- (u5);
        
        \node [circle, draw = black, inner sep = 1pt] (y) at (0,.5) {};
        \node [circle, draw = black, inner sep = 1pt] (x) at (0,-.5) {};
        \node[left=1pt of y] {\small $y$}; 
        \node[right=1pt of x] {\small $x$};

        \draw[very thick,color1] (x) -- (y);
        \draw (u1) edge[bend right] (x);
        \draw[very thick, color3] (x) -- (u5);
        \draw (u4) edge[bend right] (y);
        \draw (y) -- (u2);
      \end{scope}

      \begin{scope}[shift={(6.75,-4)}]
        \node[circle, inner sep=1.3cm] (E) at (0,0) {};

        \foreach \i in {1,...,6}{
          \node[circle, draw = black, inner sep = 1pt] (u\i) at
          ($(180-\i*60:1.2cm)$) {};
          \node at ($(180-\i*60:1.5cm)$) {\small $u_\i$};
        }

        \foreach \i/\j in {1/2,3/4,5/6}{
          \draw (u\i) -- (u\j);
        }
        \foreach \i/\j in {2/3,4/5,6/1}{
          \draw[very thick,side by side=color3:color1] (u\i) -- (u\j);
        }
        
        \node [circle, draw = black, inner sep = 1pt] (y) at (0,.5) {};
        \node [circle, draw = black, inner sep = 1pt] (x) at (0,-.5) {};
        \node[left=1pt of y] {\small $y$}; 
        \node[right=1pt of x] {\small $x$};

        \draw[very thick,side by side=color1:color3] (x) -- (y);
        \draw (u1) edge[bend right] (x);
        \draw (x) -- (u5);
        \draw (u4) edge[bend right] (y);
        \draw (y) -- (u2);
      \end{scope}
      \draw[->] (A) -> (B);
      \draw[->] (B) -> (C);
      \draw[->] (C) -> (D);
      \draw[->] (D) -> (E);
    \end{tikzpicture}
    \caption{The reconfiguration sequence of Claim~\ref{cl:2Switch}. $M$ is
      represented in blue and $M'$ in red.}\label{fig:2Switch}
  \end{figure}
  \end{poc}

  \paragraph{Bipartite case}
  For $i \in \{1,2,3\}$, consider $E_i$ the set of edges of $M \cap M'$ with
  opposite endpoints adjacent to $u_i$ and $u_{i+3}$ respectively. Let
  $V'_i = V_i \setminus V(C)$ where $V_i$ is the half of the bipartition that
  contains $u_i$. Let $A_i$ be the set of vertices of $V'_i$ that are \emph{not}
  matched in $M \cap M'$ to a neighbour of $u_i$. We have
  $|A_i| = n-3 - (\deg(u_{i}) - 2) = n -1 - \deg(u_{i})$, because $C$ contains no even
  cycle and $G$ is bipartite so the only neighbours of $u_{i}$ in $V(C)$ are
  $u_{i+1}$ and $u_{i-1 \pmod 6}$. Let $B_i = V'_i \setminus N(u_{i+3})$. By the
  same reasoning, we also have $|B_i| = n - 1 - \deg(u_{i+3})$. As $u_i$ and $u_{i+3}$ are non-adjacent because $C$ has no even chords, $|A_i \cup B_i| = 2n - 2 - \deg(u_i) - \deg(u_{i+3}) < 2n/3-2$. So for every $i \in [3]$, we have
  \begin{align*}
      |E_i| &= |V'_i \setminus (A_i \cup B_i)|
       >  (n-3) - (2n/3 -2)
       > n/3 - 1.
  \end{align*}
  As $|M \cap M'| = n-3$, by the pigeonhole principle, there exists an edge
  $xy$ of $M \cap M'$ that belongs to at least two of the $E_i$, say without loss of generality
  $E_1$ and $E_2$. Namely, $x$ is adjacent to $u_1$ and $u_5$, and $y$ to $u_2$
  and $u_4$, which concludes the proof of the bipartite case by
  Claim~\ref{cl:2Switch}.

  \paragraph{Non-bipartite case}
  For $i \in [6]$, let
  \[E_i = \{(x,y) \colon xy \in M \cap M', x \in N(u_i), y \in N(u_{i+3 \pmod
    6})\}.\] Let $A_i$ be the set of vertices of
  $V' = V(G) \setminus \{u_1, \dots u_6\}$ that are \emph{not} matched to a
  neighbour of $u_i$. We have $|A_i| \le n-6 - (\deg(u_i) -4) \le n-2 - \deg(u_i)$ because $C$
  contains no even chord. Letting
  $B_i = V' \setminus N(u_{i+3 \pmod 6})$, we also have $|B_i| \le n-2 - \deg(u_{i+3\pmod 6})$. So for every $i \in [6]$, we have
  \begin{align*}
    |E_i| &= |V' \setminus (A_i \cup B_i)|
     \ge |V'| - (|A_i|+|B_i)\\
    & \ge n-6 - (2n- 4 - \deg(u_i) - \deg(u_{i+3 \pmod 6})) \\
    & > n/3 -2
  \end{align*}
  
  Consider the weight function $w$ over all pairs $(x,y)$ such that
  $xy \in M \cap M'$ defined as follows: $w(x,y)$ is the number of integers $i \in [6]$
  such that $x$ is adjacent to $u_i$ and $y$ to $u_{i +3 \pmod 6}$. By double counting, we have
  $$\sum_{(x,y) \colon xy \in M \cap M'} w(x,y) = \sum_{i \in [6]} |E_i| >
  6(n/3-2)= 2(n-6).$$ Since there are $n-6$ such couples $(x,y)$, there exists $(x,y)$
  with $w(x,y) > 2$. So without loss of generality, $x$ is adjacent to $u_1$ and
  $u_5$ and $y$ to $u_2$ and $u_4$, which concludes the proof by
  Claim~\ref{cl:2Switch}.
\end{proof}

\section{Lower bounds}\label{sec:lower}

We first give in Subsection~\ref{ssec:generic} general constructions yielding lower bounds on the
thawing, clustering, giant component and connectedness thresholds for any
$k$. In Subsection~\ref{ssec:precise} we give constructions yielding sharper bounds for
the connectedness thresholds when $k \in \{2,3\}$, as well as lower bounds for
the connectedness threshold of regular balanced bipartite graph.

\subsection{Lower bounds for generic $k$}\label{ssec:generic}

We first define two families of graphs, one composed of balanced bipartite
graphs, the other of general graphs, that we will use for several of our lower
bounds. For every $\gamma \in (0,1]$, for every integers $k \ge 2$, $p \ge 1$ and
$n$ such that $\gamma n/2$ is integral and at least  $p(k+1)$, let
$G_{k,p,\gamma,n}$ be the $n$-vertex graph constructed as follows. We partition
the vertex set $V(G_{k,p,\gamma,n})$ into three sets $X \sqcup Y \sqcup Z$ with
$|X| = 2p(k+1)$, $|Y| = \gamma n/2 - p(k+1)$ and
$|Z| = (1- \gamma/2)n - p(k+1)$. Note that these quantities are integral because
we assumed that $\gamma n/2$ is integral so that the graph could have a matching
of size exactly $\gamma n/2$. We add the edges such that the set $X$ induces a collection of $p$
vertex-disjoint cycles of length $2(k+1)$, $Y$ and $Z$ are independent sets, and
all edges between $Y$ and $X\cup Z$ are present (see
Figure~\ref{fig:non_connectedness}).
  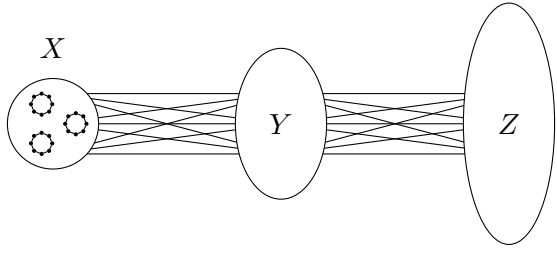
\begin{figure}[ht!]
    \centering        
    \begin{tikzpicture}[scale =2]
      \foreach \i in {1,2,3}{
        \foreach \x in {1,...,3}{
          \node (a\i\x) at (-3+1.5*\i,-.4+.2*\x) {};
        }
      }

      \foreach \x in {1,...,3}{
        \foreach \y in {1,...,3}{
          \draw (a1\x) -- (a2\y) -- (a3\x);
        }
      }

      \draw[black,fill=white] (-1.5,0) circle (.3cm); 
      \draw[black,fill=white] (0,0) ellipse (.3cm and .5cm); 
      \draw[black,fill=white] (1.5,0) ellipse (.3cm and .8cm);

      \node at (1.5,0) {$Z$};
      \node at (0,0) {$Y$};
      \node at (-1.5,.5) {$X$};

      \foreach \i in {1,2,3}{
        \foreach \j in {1,...,8}{
          \node[circle, fill=black, draw = black, inner sep = .4pt] (c\i\j) at ($(-1.5,0)+(\i*120:.15cm)+(\j*45:.07cm)$) {};
        }
        \foreach \j in {1,...,8}{
          \pgfmathsetmacro{\jj}{int(Mod(\j,8)+1)}
          \draw (c\i\j) -- (c\i\jj);
        }
      }
    \end{tikzpicture}
    \caption{The graph $G_{k,p,\gamma,n}$, here with $p=3$ and $k=3$.}\label{fig:non_connectedness}
  \end{figure}

\begin{lemma}\label{lem:non_connectedness}
  Let $\gamma \in (0,1]$ and $k, p, n$ be integers with $k \ge 2$, $p\ge 1$ and
  $\gamma n/2 \ge p(k+1)$ an integer. The graph $G_{k,p,\gamma,n}$ has minimum degree
  $\gamma n/2-p(k+1)$ and the $k$-switch graph $\mathcal{H}_k(G_{k,p,\gamma,n}, \gamma)$ of $\gamma n$-matchings has exactly $2^p$ connected components, which have equal
  size. Moreover, the restrictions of the $\gamma n$-matchings to $X$ are constant
  within each of these connected components, i.e. $p(k+1)$ frozen edges.
\end{lemma}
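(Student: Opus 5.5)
The plan is a direct structural analysis of $G=G_{k,p,\gamma,n}$. We first determine exactly what its $\gamma n$-matchings look like, and then read off both the components of $\mathcal H_k(G,\gamma)$ and the frozen edges.

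\emph{Minimum degree.} Each vertex of $X$ has degree $2+|Y|$, since it has two neighbours on its cycle and is complete to $Y$. Each vertex of $Y$ has degree $|X|+|Z|$. Each vertex of $Z$ has degree $|Y|=\gamma n/2-p(k+1)$. One checks from the sizes that $|Y|$ is the smallest of the three values, because $|X|+|Z| = n-\gamma n/2+p(k+1)\ge |Y|$. So $\delta(G)=\gamma n/2-p(k+1)$.

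\emph{Structure of the matchings.} Every edge of $G$ either lies inside $X$ or has an endpoint in $Y$, and $Y$ is independent. Hence a matching $M$ satisfies $|M| \le |Y| + |M\cap E(G[X])| \le |Y| + p(k+1) = \gamma n/2$. Here $p(k+1)$ is the maximum size of a matching in $p$ disjoint $2(k+1)$-cycles. A $\gamma n$-matching has exactly $\gamma n/2$ edges, so equality holds throughout. This means $M$ restricted to $X$ is a perfect matching of each cycle, and every vertex of $Y$ is matched. Since all of $X$ is already covered, every vertex of $Y$ is matched into $Z$. Conversely, any choice of a perfect matching on each cycle, together with any injection $Y\to Z$, gives a $\gamma n$-matching. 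There are $2^p$ choices for the restriction to $X$, since each even cycle has exactly two perfect matchings.

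\emph{The restriction to $X$ is invariant under $k$-switches.} Suppose two $\gamma n$-matchings differ on some cycle $C$ of $X$. Their restrictions to $C$ are the two complementary perfect matchings of $C$, so their symmetric difference contains all $2(k+1)$ edges of $C$. The Hamming distance is then more than $2k$, so they are not adjacent in $\mathcal H_k$. Consequently each class of matchings with a fixed $X$-restriction is a union of components, and the $p(k+1)$ edges of that restriction are frozen in it.

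\emph{Each class is connected.} Within a class, the matchings correspond to injections $Y\to Z$ in the complete bipartite graph between $Y$ and $Z$. If some $z'\in Z$ is unused, reassigning a single $y$ to $z'$ is a switch with $j=1\le k$. Transposing the partners of $y_1,y_2$ is a $2$-switch, which is valid because $Y$ is complete to $Z$. Any injection can be transformed into any other by such moves: first align images using reassignments to free vertices where possible, then fix the remaining permutation by transpositions. The only case without free vertices is $|Z|=|Y|$, and there transpositions alone suffice.

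\emph{Conclusion.} Every class has the same size, namely the number of injections $Y\to Z$. This gives exactly $2^p$ components of equal size, each with the stated frozen edges on $X$. The only genuinely delicate step is the counting argument forcing the rigid structure of $\gamma n$-matchings. Once that is established, the remaining steps are routine checks.
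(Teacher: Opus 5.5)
Your proof is correct and follows essentially the same route as the paper. You compute degrees, force a perfect matching on the cycles and an injection $Y\to Z$ by counting, note that distinct restrictions to $X$ differ on an entire $2(k+1)$-cycle, and use completeness between $Y$ and $Z$ to connect each class. Your bound $|M|\le |Y|+p(k+1)$ and your explicit use of reassignments and transpositions spell out steps that the paper only states briefly.
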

\begin{proof}
  The vertices in $Y$ have degree $(1-\gamma/2)n + p(k+1) \ge n/2$, the vertices
  of $X$ have degree larger than those in $Z$, which have degree
  $\gamma n/2 - p(k+1)$, so $\delta(G_{k,p,\gamma,n}) = \gamma n/2 - p(k+1)$.

  Let $M$ be a $\gamma n$-matching of $G_{k,p,\gamma,n}$. The number of edges in
  $M$ is $\gamma n/2= |Y| + |X|/2$. Since $Z$ and $Y$ are independent sets and $N(Z) = Y$, $M$ has $|Y|$ edges connecting $Y$ to
  $Z$ and $|X|/2$ edges within $X$. In other words, $M$ induces a perfect
  matching on $X$, and all vertices of $Y$ are matched to a vertex of $Z$. As
  $G_{k,p,\gamma,n}[X]$ is a collection of $p$ cycles of length $2(k+1)$, it has
  exactly $2^p$ perfect matchings. Any two of these matchings differ on at least
  one cycle, that is on $2(k+1)$ edges, which proves that $G_{k,p,\gamma,n}[X]$
  (and respectively $G_{k,p,\gamma,n}$) has at least $2^p$ equivalence classes
  of $\gamma n$-matchings under $k$-switches. As $G_{k,p,\gamma,n}[Y \cup Z]$ is
  a bipartite complete graph, all matchings of $G_{k,p,\gamma,n}[Y\cup Z]$ saturating $Y$ are
  equivalent under $2$-switches. Therefore, the equivalence classes of the
  $\gamma n$-matchings of $G_{k,p,\gamma,n}$ under $k$-switches are in one-to-one
  correspondence with the restriction of the matchings to $X$, which proves that
  the number of connected components of $\mathcal{H}_k(G_{k,p,\gamma,n}, \gamma)$ is exactly $2^p$, and that the restrictions of the $\gamma n$-matchings to $X$ are constant within each of the connected components, and finally that the connected components all have equal size.
\end{proof}

We now construct a balanced bipartite graph similar to $G_{k,p,\gamma,n}$. For
every $\gamma \in (0,1]$, for every integer $k \ge 2$, $p \ge 1$ and $n$ such that
$\gamma n$ is integral and at least $p(k+1)$, let
$G_{k,p,\gamma,n}^{(\mathcal B)}$ be the balanced bipartite graph on $2n$
vertices constructed as follows. Write $V_1 \sqcup V_2$ for the bipartition of
$V(G_{k,p,\gamma,n}^{(\mathcal B)})$. For each $i$, we partition $V_i$ in three
sets $X_i \sqcup Y_i \sqcup Z_i$ with $|X_i| = p(k+1)$,
$|Y_1| = \lceil \gamma n/2 - p(k+1)/2 \rceil$ and
$|Z_1| = \lfloor (1- \gamma/2)n - p(k+1)/2 \rfloor$, and $Y_2$ and $Z_2$ of size
like $Y_1$ and $Z_1$ with the floors and ceilings permuted.  For each $i \in [2]$
we add all the edges between $Y_i$ and $X_{3-i} \cup Z_{3-i}$. Finally,
$G_{k,p,\gamma,n}^{(\mathcal B)}[X_i \cup X_2]$ is a disjoint collection $p$
cycles of length $2(k+1)$ alternating between $X_1$ and $X_2$ (see
Figure~\ref{fig:non_connectedness_bip}).
  \begin{figure}[ht!]
    \centering        
    \begin{tikzpicture}[scale =2]
      \foreach \i in {1,...,3}{
        \foreach \x in {1,...,3}{
          \node (a\i\x) at (-.5+\i,-.4+.2*\x) {};
          \node (b\i\x) at (.5-\i,-.4+.2*\x) {};
        }
      }

      \foreach \x in {1,...,3}{
        \foreach \y in {1,...,3}{
          \draw (a1\x) -- (a2\y) -- (a3\x);
          \draw (b1\x) -- (b2\y) -- (b3\x);
        }
      }

      \draw[black,fill=white] (-.5,0) ellipse (.3cm and .5cm); 
      \draw[black,fill=white] (-1.5,0) ellipse (.3cm and .6cm); 
      \draw[black,fill=white] (-2.5,0) ellipse (.3cm and .8cm);

      \draw[black,fill=white] (.5,0) ellipse (.3cm and .5cm); 
      \draw[black,fill=white] (1.5,0) ellipse (.3cm and .6cm); 
      \draw[black,fill=white] (2.5,0) ellipse (.3cm and .8cm);

      \node at (.65,0) {$X_1$};
      \node at (1.5,0) {$Y_2$};
      \node at (2.5,0) {$Z_1$};
      \node at (-.65,0) {$X_2$};
      \node at (-1.5,0) {$Y_1$};
      \node at (-2.5,0) {$Z_2$};

      \foreach \i in {1,2}{
        \foreach \j in {1,...,4}{
          \node[circle, fill=black, draw = black, inner sep = .4pt] (c\i\j) at (-1.5+\i,\j*.1) {};
          \node[circle, fill=black, draw = black, inner sep = .4pt] (d\i\j) at (-1.5+\i,-\j*.1) {};
        }
      }
      \foreach \j in {1,...,4}{
        \pgfmathsetmacro{\jj}{int(Mod(\j,4)+1)}
        \draw (c1\j) -- (c2\j) -- (c1\jj);
        \draw (d1\j) -- (d2\j) -- (d1\jj);
      }

    \end{tikzpicture}
    \caption{The graph $G_{k,p,\gamma,n}^{(\mathcal{B})}$, here with $p = 2$ and $k = 3$.}\label{fig:non_connectedness_bip}
  \end{figure}
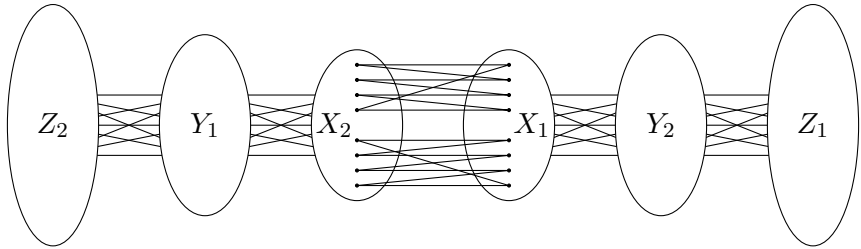

\begin{lemma}\label{lem:non_connectedness_bip}
  Let $\gamma \in (0,1]$ and $k, p, n$ be integers with $k \ge 2$, $p \ge 1$ and
  $\gamma n \ge p(k+1)$ an integer. The graph $G_{k,p,\gamma,n}^{(\mathcal{B})}$ has minimum degree
  $\lfloor (\gamma n-p(k+1))/2 \rfloor$ and under $k$-switches, the $k$-switch graph $\mathcal{H}_k(G_{k,p,\gamma,n}^{(\mathcal{B})},\gamma)$ of $\gamma n$-matchings
   has exactly $2^p$ connected components,
  which have equal size. Moreover, the restrictions of the
  $\gamma n$-matchings  to $X_1 \cup X_2$ are constant within each of these connected components, i.e. $p(k+1)$ frozen edges.
\end{lemma}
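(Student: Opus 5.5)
The plan mirrors the proof of Lemma~\ref{lem:non_connectedness}, adapted to the bipartite setting. First I compute the degrees. A vertex of $Z_1$ is adjacent exactly to $Y_2$, so its degree is $|Y_2| = \lfloor \gamma n/2 - p(k+1)/2\rfloor = \lfloor(\gamma n - p(k+1))/2\rfloor$. A vertex of $Z_2$ is adjacent to $Y_1$, which has size $\lceil(\gamma n - p(k+1))/2\rceil$. A vertex of $X_i$ has degree $|Y_{3-i}|+2$. A vertex of $Y_i$ has degree $|X_{3-i}|+|Z_{3-i}| = n-|Y_{3-i}|$, which is at least $n/2$ since $|Y_{3-i}|\le \lceil \gamma n/2\rceil$. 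The minimum over these is the claimed $\lfloor(\gamma n-p(k+1))/2\rfloor$, though the floor/ceiling bookkeeping must be checked.

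Second, I determine the structure of an arbitrary $\gamma n$-matching $M$. It has $\gamma n$ edges, since it covers $\gamma n$ vertices on each side. Every edge of $G^{(\mathcal B)}_{k,p,\gamma,n}$ meets $Y_1\cup Y_2$ or lies inside $X_1\cup X_2$. Hence $|M| \le |Y_1|+|Y_2| + |X_1|$. Using $|Y_1|+|Y_2| = \gamma n - p(k+1)$ and $|X_1| = p(k+1)$, this bound equals $\gamma n$, so equality holds. Equality forces three things: every vertex of $Y_1\cup Y_2$ is matched; no edge of $M$ joins $Y_i$ to $X_{3-i}$, since otherwise the count of edges inside $X$ would drop; and $M$ restricted to $X_1\cup X_2$ is a perfect matching of the $p$ disjoint $2(k+1)$-cycles. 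The main subtlety is the middle point. An edge $Y_i$--$X_{3-i}$ uses up an $X$ vertex, which leaves at most $p(k+1)-1$ edges inside $X$, because the cycles have no near-perfect matching covering all but one side-balanced pair plus more. I will make this precise by writing $|M| = e(X) + e(Y)$ with $e(Y)\le |Y_1|+|Y_2|$ and $2e(X) + \#\{Y\text{--}X \text{ edges}\}\le |X_1|+|X_2|$. Consequently $M$ consists of a perfect matching of $G[X_1\cup X_2]$ together with a matching saturating $Y_1\cup Y_2$ into $Z_2\cup Z_1$.

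Third, I count components. A cycle of length $2(k+1)$ has exactly two perfect matchings, so there are $2^p$ possible restrictions to $X_1\cup X_2$. Two matchings with different restrictions differ on an entire cycle, that is, their symmetric difference contains a $2(k+1)$-cycle component. A $k$-switch alters at most $k$ edges of the matching, and by the structure above every intermediate matching still restricts to a perfect matching of the cycles. Therefore a single $k$-switch cannot change the restriction to any cycle, which would require switching all $k+1$ edges of that cycle's matching. It follows that the restriction to $X_1\cup X_2$ is invariant on each component, which yields the frozen edges.

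Conversely, with the restriction fixed, the remaining parts are matchings saturating $Y_1$ into the complete bipartite graph $Y_1\times Z_2$, and likewise $Y_2$ into $Z_1$. Any two such matchings are connected by $2$-switches, via the standard swap argument in complete bipartite graphs. This gives exactly $2^p$ components. They have equal size because each component is isomorphic to the product of the same $Y$--$Z$ matching spaces.
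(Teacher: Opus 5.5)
Your proposal is correct and follows essentially the same route as the paper. Both arguments have the same four steps:
\begin{itemize}
\item compute the degrees, with the minimum attained on $Z_1$;
\item show that every $\gamma n$-matching is a perfect matching of the $p$ cycles on $X_1\cup X_2$ together with a matching saturating $Y_1\cup Y_2$ into $Z_2\cup Z_1$;
\item observe that changing the restriction on a cycle costs all $2(k+1)$ of its edges;
\item connect the complete bipartite $Y$--$Z$ parts by small switches.
\end{itemize}
Your edge-count inequality $2e(X)+\#\{Y\text{--}X\text{ edges}\}\le |X_1|+|X_2|$ justifies the structural step more cleanly than the paper's one-line assertion. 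To get exactly $2^p$ components, also note that each restriction extends to a $\gamma n$-matching, since $|Z_2|\ge|Y_1|$ and $|Z_1|\ge|Y_2|$.
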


\begin{proof}
  This proof is highly similar to that of Lemma~\ref{lem:non_connectedness}. The vertices of $Y_i$ have degree at least $p(k+1)+ \lfloor(1-\gamma/2)n - p(k+1)/2\rfloor \ge n/2$, the vertices of $X_i$  at least $\lfloor \gamma n - p(k+1)/2\rfloor +2$ and the vertices of $Z_i$ at least $\lfloor \gamma n - p(k+1)/2\rfloor$, so $\delta(G_{k,p,\gamma,n}^{(\mathcal{B})}) = \lfloor \gamma n - p(k+1)/2\rfloor$.

  Let $M$ be a perfect matching of $G_{k,p,\gamma,n}^{(\mathcal{B})}$. The number of edges in $M$ is $\gamma n = |Y_1| + |Y_2| + |X_1 \cup X_2|/2$. As the vertices in $Z_1 \cup Z_2$ can only be matched to $Y_1 \cup Y_2$, $M$ induces a perfect matching on $X_1 \cup X_2$ and $M$ saturates all the vertices of $Y_1 \cup Y_2$, which are matched to some vertices of $Z_1 \cup Z_2$. As $X_1 \cup X_2$ induces a collection of $p$ cycles of length $2(k+1)$, $G_{k,p,\gamma,n}^{(\mathcal{B})}[X_1 \cup X_2]$ has exactly $2^p$ perfect matchings. Any two of these matchings differ on at least
  one cycle, that is on $2(k+1)$ edges, which proves that $G_{k,p,\gamma,n}^{(\mathcal{B})}[X_1 \cup X_2]$
  (and respectively $G_{k,p,\gamma,n}^{(\mathcal{B})}$) have at least $2^p$ equivalence classes
  of $\gamma n$-matchings under $k$-switches. As $G_{k,p,\gamma,n}^{(\mathcal{B})}[Y_1 \cup Z_2]$ and $G_{k,p,\gamma,n}^{(\mathcal{B})}[Y_2 \cup Z_1]$ are
  bipartite complete graphs, all matchings of $G_{k,p,\gamma,n}^{(\mathcal{B})}[Y_1 \cup Y_2 \cup Z_1 \cup Z_2]$ saturating $Y_1 \cup Y_2$ are
  equivalent under $2$-switches. Therefore, the connected components of $\mathcal{H}_k(G_{k,p,\gamma,n}^{(\mathcal{B})},\gamma)$ are in one-to-one
  correspondence with the restriction of the matchings to $X_1 \cup X_2$, which proves that
  the number of connected components is exactly $2^p$, and that the restrictions of the $\gamma n$-matchings to $X_1 \cup X_2$ are constant within each of the connected components, and finally that the connected components all have equal size.
\end{proof}

By adjusting the value of
$p$ in $G_{k,p,\gamma,n}$ and $G_{k,p,\gamma,n}^{(\mathcal B)}$, we obtain the following four corollaries.
\begin{corollary}\label{cor:clustering}
  For every $\eps > 0$, there exists $c>1$ such that for every $k \ge 2$, $\gamma \in (0,1]$ and $n$ such that $\gamma n/2$ is integral and at least $2(k+1)$, there exists an $n$-vertex graph $G$ with minimum degree at least $\gamma n/2 - (k+1) - \eps k n$ such that $\mathcal{H}_{k}(G,\gamma)$ has $c^n$ connected components (alternatively, $G$ is a balanced bipartite graph on $2n$ vertices, where $\gamma n$ is integral and greater than $k$, with minimum degree $\lfloor(\gamma n - (k+1))/2\rfloor - \eps k n$ and the same condition on $\mathcal{H}_{k}(G,\gamma)$).\\ 
  In other words, we have the following inequalities:
  \[
  \delta^{\mathbf{cluster}}_k(n,\gamma)\ge \frac{\gamma n}2 - k - o(kn),
  \quad \text{ and } \quad
  \delta^{\mathbf{cluster}}_{k,\mathrm{bip}}(n,\gamma)\ge\left\lfloor\frac{\gamma n - (k-1)}2\right\rfloor- o(kn).
  \]
\end{corollary}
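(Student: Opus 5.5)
The plan is to apply Lemma~\ref{lem:non_connectedness} (respectively Lemma~\ref{lem:non_connectedness_bip}) with the number of cycles $p$ chosen linear in $n$. The number of components is $2^p$ and the minimum degree is $\gamma n/2 - p(k+1)$. So it suffices to pick $p$ with $p(k+1) \le (k+1) + \eps k n$ while $2^p \ge c^n$ for a $c>1$ depending only on $\eps$.

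\textbf{Choice of $p$ in the general case.} Set $p = \max\{1, \lfloor \eps k n/(k+1)\rfloor\}$, further capped so that $p(k+1) \le \gamma n/2$. This cap is needed for Lemma~\ref{lem:non_connectedness} to apply; it is available since $\gamma n/2 \ge 2(k+1)$. Then $p(k+1) \le \max\{k+1, \eps k n\} \le (k+1)+\eps k n$, so $\delta(G_{k,p,\gamma,n}) \ge \gamma n/2 - (k+1) - \eps k n$. For the count, $\eps k n/(k+1) \ge \eps n \cdot \tfrac{2}{3}$ because $k\ge 2$, so $p \ge \min\{\tfrac{2}{3}\eps n - 1,\ \gamma n/(2(k+1)) - 1\}$.

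\textbf{The main obstacle.} The cap $p \le \gamma n/(2(k+1))$ could make $p$ sublinear when $k$ is large compared to $n$. I would handle this by noting that if $\eps k n \ge \gamma n/2$, the degree bound $\gamma n/2 - (k+1) - \eps k n$ is at most $0$. In that regime it suffices to exhibit any graph whose switch graph has $c^n$ components, and the lemma with $p$ maximal still yields $2^p$ components. Here I would need to choose $c$ with $c^n \le 2^p$; otherwise I would interpret ``$c^n$'' loosely as exponentially many, i.e.\ $\Omega(c^n)$ for large $n$. The honest fix is to take $c = 2^{\eps/2}$ and restrict attention to $n \ge n_0(\eps)$, absorbing the $-1$ from the floor. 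When $\eps k$ is so large that the minimum-degree bound is vacuous, I would use $G$ a disjoint union of $\lfloor n/(2k+2)\rfloor$ cycles of length $2k+2$, as in the introduction. This gives $2^{n/(2k+2)}$ isolated vertices, which is at least $c^n$ once $\eps$ is bounded below appropriately relative to $1/k$; I would check this case separately.

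\textbf{Bipartite case and conclusion.} The bipartite case is identical using $G^{(\mathcal B)}_{k,p,\gamma,n}$. Here the minimum degree is $\lfloor(\gamma n - p(k+1))/2\rfloor \ge \lfloor(\gamma n-(k+1))/2\rfloor - \eps k n$ with the same choice of $p$, using that halving only helps. The displayed threshold inequalities then follow directly from the definition of $\delta^{\mathbf{cluster}}_k$: the constructed graph $G$ has $\mathcal H_k(G,\gamma)$ with at least $c^n$ components, so $\mathbf{cluster}$ with parameter $c$ fails at that minimum degree. Rewriting $-(k+1)-\eps kn$ as $-k-o(kn)$ gives the stated form.
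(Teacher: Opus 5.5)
Away from the regime where your cap binds, your argument is the paper's: apply Lemma~\ref{lem:non_connectedness} (resp.\ Lemma~\ref{lem:non_connectedness_bip}) with $p$ linear in $n$. The paper takes $p=\lceil \eps n/2\rceil$ and $c=2^{\eps/2}$, and the ceiling gives $2^p\ge c^n$ for every $n$. With your floor you may not simply ``restrict to $n\ge n_0(\eps)$'', since the statement is for all $n$. You can instead shrink $c$, e.g.\ to $2^{\eps/6}$, and use $p\ge 1$ for small $n$.

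The genuine gap is the regime $\eps k>\gamma/2$, which is the only one in which your cap $p(k+1)\le\gamma n/2$ can bind. There the lemma gives only $2^{\lfloor \gamma n/(2(k+1))\rfloor}$ components, and the union of $(2k+2)$-cycles gives only $2^{n/(2k+2)}$. Neither is at least $c^n$ for a $c>1$ depending on $\eps$ alone. In this regime $k$ is bounded \emph{below} (by $\gamma/(2\eps)$) and may be as large as $\gamma n/4-1$, so your condition relating $\eps$ and $1/k$ points the wrong way. The cycle union also concerns only perfect matchings (and needs $(2k+2)\mid n$), and reading $c^n$ ``loosely'' is not a proof.

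In fact no construction achieves a $c$ depending on $\eps$ alone. If $\gamma n/2=2(k+1)$ with $k$ fixed and $n\to\infty$, every $n$-vertex graph has fewer than $n^{4(k+1)}$ matchings with $\gamma n/2$ edges, hence fewer than $c^n$ components once $n$ is large. So $c$ must be allowed to depend on $k$ and $\gamma$.

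The paper does this, somewhat silently, by assuming without loss of generality that $\eps<\gamma/(4k)$; shrinking $\eps$ only strengthens the degree conclusion. Then $p(k+1)\le(k+1)+\eps kn\le\gamma n/2$, so the cap never binds, and $c=2^{\eps/2}$ for the reduced $\eps$ depends on $k$ and $\gamma$. This is all the displayed inequalities for $\delta^{\mathbf{cluster}}_k(n,\gamma)$ and $\delta^{\mathbf{cluster}}_{k,\mathrm{bip}}(n,\gamma)$ need, since $k$ and $\gamma$ are fixed there.

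Your own remark that the lemma with maximal $p$ still gives $2^p$ components is then the right fix. With your capped $p$, $c=\min\{2^{\eps/6},2^{\gamma/(4(k+1))}\}$ works. This uses $\lfloor x\rfloor\ge x/2$ for $x=\gamma n/(2(k+1))\ge 2$, and the fact that the degree bound is negative whenever the cap binds. Drop the cycle-union construction.
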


\begin{corollary}\label{cor:freezing}
   For every $\eps > 0$, there exists $c>1$ such that for every $k \ge 2$, $\gamma \in (0,1]$ and $n$ such that $\gamma n/2$ is integral and at least $2(k+1)$, there exists an $n$-vertex graph $G$ with minimum degree at least $\gamma n/2 - \eps n$ such that each connected component of $\mathcal{H}_{k}(G,\gamma)$ has at most $\gamma n/(2c)$ non-frozen edges (alternatively, $G$ is a balanced bipartite graph on $2n$ vertices, with minimum degree $\lfloor(\gamma n - (k+1))/2\rfloor - \eps n$ and the same condition on $\mathcal{H}_{k}(G,\gamma)$).\\ 
  In other words, we have the following inequalities:
  \[
  \delta^{\mathbf{thaw}}_k(n,\gamma)\ge \frac{\gamma n}2 - k - o(n),
  \quad \text{ and } \quad
  \delta^{\mathbf{thaw}}_{k,\mathrm{bip}}(n,\gamma) \ge \left\lfloor\frac{\gamma n - (k-1)}2\right\rfloor- o(n).
  \]
\end{corollary}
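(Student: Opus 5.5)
The statement to prove is Corollary~\ref{cor:freezing}. I would take it directly from Lemma~\ref{lem:non_connectedness} (and Lemma~\ref{lem:non_connectedness_bip} for the bipartite case) by choosing $p$ linear in $n$, namely $p = \lfloor \eps n/(k+1)\rfloor$, capped so that $p(k+1) \le \gamma n/2$.

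\textbf{General case.} In $G_{k,p,\gamma,n}$ the minimum degree is $\gamma n/2 - p(k+1) \ge \gamma n/2 - \eps n$. By Lemma~\ref{lem:non_connectedness}, within each component of $\mathcal H_k(G,\gamma)$ the restriction of the matching to $X$ is constant. This gives $|X|/2 = p(k+1)$ frozen edges. Every $\gamma n$-matching has $\gamma n/2$ edges, and a non-frozen edge must lie in some matching of the component. Since the $X$-part is common to all matchings of the component, every non-frozen edge is a $Y$--$Z$ edge.

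The main obstacle is what to count. Reading ``non-frozen edges'' literally as edges of $G$ used by some but not all matchings of the component gives about $|Y||Z|$ of them, which is far too many. I would therefore interpret the count per matching: in each matching $M$ of the component, the number of edges of $M$ that are not frozen. This is the reading consistent with the threshold definition of \textbf{thaw}, where the threshold is stated relative to $\gamma n/2$. Under this reading each matching has exactly $\gamma n/2 - p(k+1)$ non-frozen edges. I then need
\[
\gamma n/2 - p(k+1) \le \gamma n/(2c), \qquad\text{i.e.}\qquad p(k+1) \ge (1-1/c)\gamma n/2 .
\]
Given $\eps$, I would take $c>1$ close to $1$, with $1-1/c \le \eps$, and then choose $p$ with
\[
(1-1/c)\gamma n/2 \le p(k+1) \le \eps n .
\]
Such an integer $p$ exists whenever the interval has length at least $k+1$. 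For bounded $n$ relative to $k$ the parameter choice needs a small adjustment, for instance $p \ge 1$, with $\eps$ absorbing the constants. The bound $\delta^{\mathbf{thaw}}_k(n,\gamma) \ge \gamma n/2 - k - o(n)$ then follows from the definition of the threshold.

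\textbf{Bipartite case.} The same choice of $p$ in $G^{(\mathcal B)}_{k,p,\gamma,n}$ works, now using Lemma~\ref{lem:non_connectedness_bip}. The minimum degree is $\lfloor(\gamma n - p(k+1))/2\rfloor$, and the frozen edges are the $p(k+1)$ edges inside $X_1\cup X_2$ out of $\gamma n$ matching edges. I would rescale $p$ so that $p(k+1)/2 \approx \eps n$, and the floor corrections lead to the stated $\lfloor(\gamma n-(k-1))/2\rfloor - o(n)$ form.
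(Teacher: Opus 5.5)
Your proposal is correct and is essentially the paper's proof: take $p$ of order $\eps n/(k+1)$ in Lemma~\ref{lem:non_connectedness} (resp.\ Lemma~\ref{lem:non_connectedness_bip}) and bound the non-frozen edges of each matching by $\gamma n/2-p(k+1)$, which is the same per-matching reading of ``non-frozen edges'' that the paper uses. One correction: when $\eps n<k+1$, your fallback $p\ge 1$ costs an additive $k+1$ in the minimum degree, and a fixed $\eps$ cannot absorb this since $k$ may be as large as order $\gamma n$. The paper's choice $p=\lceil \eps n/(k+1)\rceil$ incurs the same loss (it only obtains minimum degree $\gamma n/2-(k+1)-\eps n$), which is why the threshold bound is stated as $\gamma n/2-k-o(n)$.
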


\begin{corollary}\label{cor:giant}
   For every $\eps > 0$, there exists $c>1$ such that for every $k \ge 2$, $\gamma \in (0,1]$ and $n$ such that $\gamma n/2$ is integral and greater than $k$, there exists an $n$-vertex graph $G$ with minimum degree at least $\gamma n/2 - (1+\eps)(k+1)$ such that all connected components of $\mathcal{H}_{k}(G,\gamma)$ have size less than $|\mathcal{H}_{k}(G,\gamma)|/c$ (alternatively, $G$ is a balanced bipartite graph on $2n$ vertices, with minimum degree at least $\left\lfloor \frac{\gamma n - (1+\eps)(k+1)}2 \right\rfloor$ and the same condition on $\mathcal{H}_{k}(G,\gamma)$).\\ 
  In other words, we have the following inequalities:
  \[
  \delta^{\mathbf{giant}}_k(n,\gamma)\ge\frac{\gamma n}2-(1+o(1))k,  
  \quad \text{ and } \quad
  \delta^{\mathbf{giant}}_{k,\mathrm{bip}}(n,\gamma)\ge\left\lfloor\frac{\gamma n-(1+o(1))(k-1)}2\right\rfloor.
  \]
\end{corollary}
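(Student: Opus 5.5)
We obtain Corollary~\ref{cor:giant} by taking $p$ to be a constant, chosen as a function of $\eps$ only, in the constructions $G_{k,p,\gamma,n}$ and $G^{(\mathcal B)}_{k,p,\gamma,n}$. We then invoke Lemmas~\ref{lem:non_connectedness} and~\ref{lem:non_connectedness_bip}.

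Those lemmas give that $\mathcal H_k(G,\gamma)$ has exactly $2^p$ components, all of equal size. Hence every component has size exactly $|\mathcal H_k(G,\gamma)|/2^p$. So once $p\ge 1$, every component has size less than $|\mathcal H_k(G,\gamma)|/c$ for any $c<2^p$, and it is natural to take $p=1$ and $c\in(1,2)$, say $c=3/2$. The minimum degree of $G_{k,1,\gamma,n}$ is exactly $\gamma n/2-(k+1)$, which is at least $\gamma n/2-(1+\eps)(k+1)$ for every $\eps>0$. Thus $p=1$ already suffices for all $\eps$. If one wants the statement to reflect the trade-off, one can note that the largest admissible $p$ is $p=\lfloor 1+\eps\rfloor$, which gives $c$ up to $2^{\lfloor1+\eps\rfloor}$; but for existence, $p=1$ is enough. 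The hypothesis that $\gamma n/2$ is an integer greater than $k$, i.e.\ at least $k+1=p(k+1)$, is exactly the condition Lemma~\ref{lem:non_connectedness} needs for $p=1$.

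For the bipartite case we take $G^{(\mathcal B)}_{k,1,\gamma,n}$. Lemma~\ref{lem:non_connectedness_bip} gives minimum degree $\lfloor(\gamma n-(k+1))/2\rfloor$, which is at least $\lfloor(\gamma n-(1+\eps)(k+1))/2\rfloor$. It also gives exactly two equal components, so the same choice of $c$ works. We need $\gamma n\ge k+1$; this follows from the corresponding integrality and size assumption.

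Finally we translate to the threshold notation. The existence of such $G$ means that $\mathcal P=\mathbf{giant}$ fails at minimum degree $\gamma n/2-(k+1)$. Hence $\delta^{\mathbf{giant}}_k(n,\gamma)>\gamma n/2-(k+1)$, i.e.\ $\ge \gamma n/2-k$, which is stronger than the stated $\gamma n/2-(1+o(1))k$. In the bipartite case we similarly get $\delta^{\mathbf{giant}}_{k,\mathrm{bip}}(n,\gamma)\ge\lfloor(\gamma n-(k+1))/2\rfloor+1=\lfloor(\gamma n-(k-1))/2\rfloor$. The only potential obstacle is bookkeeping: the interplay of floors and ceilings in the sizes of $Y_i,Z_i$ in the bipartite construction, and matching the quantifier order ($\eps$ first, then $c$, uniformly in $k,\gamma,n$) to the convention of Subsection~\ref{ssec:thresholds}. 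Both are routine since $c$ is an absolute constant here.
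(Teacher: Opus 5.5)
Your proof is correct and matches the paper's argument, which likewise takes $p=1$ in $G_{k,p,\gamma,n}$ (resp.\ $G^{(\mathcal B)}_{k,p,\gamma,n}$) and applies Lemmas~\ref{lem:non_connectedness} and~\ref{lem:non_connectedness_bip}, adding only a remark that $p=\lceil\eps\rceil$ gives the larger constant $c=2^{\lceil\eps\rceil}$. Your choice of $c\in(1,2)$ is slightly more careful than the paper's $c=2$: with $p=1$ both components have size exactly $|\mathcal H_k(G,\gamma)|/2$, which is not strictly less than $|\mathcal H_k(G,\gamma)|/2$.
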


\begin{corollary}\label{cor:non_connectedness}
  For every $k \ge 2$, $\gamma \in (0,1]$ and $n$ such that $\gamma n/2$ is integral and greater than $k$, there exists an $n$-vertex graph $G$ with minimum degree at least $\gamma n/2 - (k+1)$ such that $\mathcal{H}_{k}(G,\gamma)$ is disconnected (alternatively, $G$ is a balanced bipartite graph on $2n$ vertices, with minimum degree at least $\left\lfloor \frac{\gamma n - (k+1)}2 \right\rfloor$ and the same condition on $\mathcal{H}_{k}(G,\gamma)$).\\ 
  In other words, we have the following inequalities:
  \[
  \delta^{\mathbf{connect}}_k(n,\gamma)\ge    \frac{\gamma n}2-k
  \quad \text{ and } \quad
  \delta^{\mathbf{connect}}_{k,\mathrm{bip}}(n,\gamma)
  \ge \left\lfloor \frac{\gamma n - (k-1)}2 \right\rfloor.
  \]
\end{corollary}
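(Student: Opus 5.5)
The plan is to apply Lemma~\ref{lem:non_connectedness} with $p=1$, and Lemma~\ref{lem:non_connectedness_bip} with $p=1$ in the bipartite case. For the general case, take $G=G_{k,1,\gamma,n}$. This is well defined because $\gamma n/2$ is an integer exceeding $k$, so $\gamma n/2 \ge k+1 = p(k+1)$. By Lemma~\ref{lem:non_connectedness}, $G$ has minimum degree exactly $\gamma n/2-(k+1)$. The same lemma says that $\mathcal H_k(G,\gamma)$ has exactly $2^1=2$ connected components. These are non-empty: $X$ induces a single $2(k+1)$-cycle, which has two perfect matchings, and each extends to a $\gamma n$-matching by matching $Y$ into $Z$ (possible since $|Z|\ge|Y|$). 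Hence $\mathcal H_k(G,\gamma)$ is disconnected.

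For the bipartite case, take $G^{(\mathcal B)}_{k,1,\gamma,n}$. Lemma~\ref{lem:non_connectedness_bip} requires $\gamma n \ge k+1$, and this holds under the stated hypotheses. The lemma gives minimum degree $\lfloor(\gamma n-(k+1))/2\rfloor$ and exactly two components of $\mathcal H_k$, again both non-empty.

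Next comes the translation into thresholds. If $\delta^{\mathbf{connect}}_k(n,\gamma)$ were at most $\gamma n/2-(k+1)$, then every graph with minimum degree at least that value would have a connected $\mathcal H_k(G,\gamma)$, contradicting the construction. So $\delta^{\mathbf{connect}}_k(n,\gamma) \ge \gamma n/2-(k+1)+1 = \gamma n/2-k$. The bipartite case is the same, since $\lfloor(\gamma n-(k+1))/2\rfloor+1 = \lfloor(\gamma n-(k-1))/2\rfloor$.

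The only delicate point is bookkeeping. I need to check that the integrality and size hypotheses of the two lemmas match those of the corollary, and in the bipartite case handle the floor/ceiling part sizes $|Y_i|,|Z_i|$ so that $Y_i$ can be saturated into $Z_{3-i}$. Everything else is immediate from the earlier lemmas.
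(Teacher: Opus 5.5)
Your proposal is correct and follows the paper's own argument. Take $p=1$ in Lemma~\ref{lem:non_connectedness} (and in Lemma~\ref{lem:non_connectedness_bip} for the bipartite case), which yields minimum degree $\gamma n/2-(k+1)$ (resp.\ $\lfloor(\gamma n-(k+1))/2\rfloor$) and exactly two components, and then obtain the threshold bound by adding one; your extra checks ($|Z|\ge|Y|$ for non-emptiness, and $\lfloor(\gamma n-(k+1))/2\rfloor+1=\lfloor(\gamma n-(k-1))/2\rfloor$) fill in details the paper leaves implicit.
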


\begin{proof}[Proofs of
  Corollaries~\ref{cor:clustering}, \ref{cor:freezing}, \ref{cor:giant} and \ref{cor:non_connectedness}] Let $\eps >0$.
  
  For the clustering threshold, assume without loss of generality that $\eps < \frac{\gamma}{4k}$ and let $p = \lceil \eps n/2\rceil$. We have $p(k+1)\le (k+1) +\eps n(k+1)/2 \le (k+1) +\eps nk \le \gamma n/2$. Thus, $G_{k,p,\gamma,n}$ has minimum degree at least $\gamma n/2- (k+1) - \eps kn$ and by Lemma~\ref{lem:non_connectedness}, $\mathcal{H}_{k}(G_{k,p,\gamma,n},\gamma)$ has $2^p \ge c^n$ connected components, where $c=2^{\eps/2}>1$.

  For the thawing threshold, assume without loss of generality that $\eps < \gamma/4$ and let $p = \lceil \eps n/(k+1)\rceil$. We have $p(k+1) \le \eps n + k+1 \le \gamma n/2$. Thus, $G_{k,p,\gamma,n}$ has minimum degree at least $\gamma n/2- (k+1) -\eps n$ and by Lemma~\ref{lem:non_connectedness}, $\mathcal{H}_{k}(G_{k,p,\gamma,n},\gamma)$ has $2^p$ connected components, each with $p(k+1)$ frozen edges, and thus at most $\gamma n/2 - p(k+1) \le (\frac{\gamma}2 - \frac{\eps}{k+1})n \le  \frac{\gamma n}{2c}$ non-frozen edges, where $c =\frac{\gamma(k+1)}{\gamma(k+1)-2\eps}>1$.

  For the connectedness threshold, let $p = 1$. Then $G_{k,p,\gamma,n}$ has minimum degree $\gamma n/2 -(k+1)$ and by Lemma~\ref{lem:non_connectedness}, $\mathcal{H}_{k}(G_{k,p,\gamma,n},\gamma)$ has $2$ connected components, of size $|\mathcal{H}_{k}(G_{k,p,\gamma,n},\gamma)|/2$, which already handles the giant component threshold with $c=2$. 
  
  More interestingly, let $p = \lceil\eps\rceil$. We have $p(k+1) \le (1+\eps)(k+1)$. Thus, $G_{k,p,\gamma,n}$ has minimum degree $\gamma n/2 - p(k+1) > \gamma n/2- (1+\eps)(k+1)$ and by Lemma~\ref{lem:non_connectedness}, $\mathcal{H}_{k}(G_{k,p,\gamma,n},\gamma)$ has $2^p$ connected components of equal size, hence all of them have size at most $|\mathcal{H}_{k}(G_{k,p,\gamma,n},\gamma)|/c$ where $c = 2^p = 2^{\lceil \eps \rceil}$.

  We obtain the bounds for balanced bipartite graphs by the same arguments.
\end{proof}

\subsection{Precise bounds for disconnectedness and regular bipartite graphs}\label{ssec:precise}
We construct two families of graphs, one composed of balanced bipartite (regular)
graphs, the other of general graphs. For any $k \ge 2$ and any even $n$, let
$F_{k,n}$ be the graph constructed as follows.  First assume that $n$ is a
multiple of $(k+1)$ to avoid divisibility issues. We partition the vertex set
$V(F_{k,n})$ in $(k+1)$ sets $\bigsqcup_{i \in [k+1]} X_i$ such that
$|X_i| = n/(k+1)$. The sets $X_1$ and $X_{k+1}$ each induce a
complete graph, all other $X_i$ induce an independent set and
$F_{k,n}[X_i \cup X_{i+1}]$ induces a complete bipartite graph for each
$i \in [k]$ (Figure~\ref{fig:non_connected_precise}). Now, if $n$ is not a multiple of
$k+1$, we add the remaining $n \pmod{k+1}$ vertices in different sets $X_i$, starting
by $X_2$, then $X_{k}$ (if $k\neq 2$), and then $X_1$ and $X_{k+1}$. In particular, for $k = 2$ and $k=3$, $\delta(F_{k,n})$ is attained by the vertices of $X_{k+1}$, thus  $\delta(F_{2,n})= \left\lceil
  \frac{n}3 \right\rceil + \left\lfloor
  \frac{n}3 \right\rfloor-1 = \left\lfloor \frac{2n-2}3\right\rfloor$ and  $\delta(F_{3,n}) = \left\lfloor
  \frac{n-2}4 \right\rfloor + \left\lfloor
  \frac{n}4 \right\rfloor = \frac{n}2-1$ because $n$ is even.

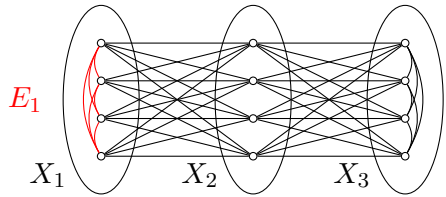
\begin{figure}[ht!]
  \centering        
  \begin{tikzpicture}
    \pgfmathsetmacro{\p}{4}
    \foreach \i in {1,2,3}{
      \draw (2*\i,1.25) ellipse (.5cm and 1.25cm);
      \foreach \x in {1,...,\p}{
        \node[circle, draw = black, inner sep = 1pt] (a\i\x) at
        (2*\i,.5*\x) {};
      }
    }

    \foreach \a/\b in {1/2,2/3}{
      \foreach \x in {1,...,\p}{
        \foreach \y in {1,...,\p}{
          \draw (a\a\x) -- (a\b\y);
        }
      }
    }
    \node at (1,1.25) {\textcolor{red}{$E_1$}};
    \node at (1.3,.25) {$X_1$};
    \node at (3.3,.25) {$X_2$};
    \node at (5.3,.25) {$X_3$};
    \foreach \x in {1,...,\p}{
      \foreach \y in {\x,...,\p}{
        \draw[red] (a1\x) edge[bend left] (a1\y);
        \draw (a3\x) edge[bend right] (a3\y);
      }
    }
  \end{tikzpicture}
  \caption{The graph $F_{2,n}$. The edges in the set $E_1$ (defined in the proof of Lemma~\ref{lem:non_connected_precise}) are drawn in red.}\label{fig:non_connected_precise}
\end{figure}
\begin{lemma}\label{lem:non_connected_precise}
  Let $k \ge 2$ and $n \ge 4(k+1)$ be an even integer. The switch graph $\mathcal H_{k}(F_{k,n})$ is
  disconnected.
\end{lemma}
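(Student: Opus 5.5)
The plan is to find an integer invariant of perfect matchings of $F_{k,n}$ that no $k$-switch can change, and then to exhibit two perfect matchings with different values of it. For a perfect matching $M$, let $a(M)$ be the number of edges of $M$ inside $X_1$ (the set $E_1$ in Figure~\ref{fig:non_connected_precise}), $b(M)$ the number inside $X_{k+1}$, and $m_i(M)$ the number of edges of $M$ between $X_i$ and $X_{i+1}$ for $i\in[k]$. Every edge of $F_{k,n}$ is of one of these types. So counting matched vertices layer by layer gives
\[
|X_1| = 2a + m_1,\qquad |X_i| = m_{i-1}+m_i \ (2\le i\le k),\qquad |X_{k+1}| = m_k+2b .
\]
Hence $a(M)$ determines all the $m_i(M)$ and $b(M)$. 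Concretely, if $a(N)-a(M)=d$, then $m_i$ changes by $\mp 2d$ with alternating signs, and $b$ changes by $\pm d$.

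The key step is to show that if $a(M)\neq a(N)$, then $|M\Delta N|\ge 2k+2$, so $M$ and $N$ are not adjacent in $\mathcal H_k(F_{k,n})$. Suppose $d\neq 0$.
\begin{itemize}
\item Since $a$ changes, at least one edge of $M\Delta N$ lies inside $X_1$.
\item Since $b$ changes, at least one edge of $M\Delta N$ lies inside $X_{k+1}$.
\item For each $i\in[k]$, $m_i$ changes by $2|d|\ge 2$, so at least two edges of $M\Delta N$ lie between $X_i$ and $X_{i+1}$.
\end{itemize}
These $k+2$ edge classes are pairwise disjoint, so $|M\Delta N|\ge 2k+2>2k$. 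The Hamming distance is then more than $2k$, and $M,N$ are not joined by a $k$-switch. Therefore $a$ is constant on each component of $\mathcal H_k(F_{k,n})$. This argument uses the whole symmetric difference at once, so a switch consisting of several alternating cycles needs no separate treatment.

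It remains to construct two perfect matchings $M_0,M_1$ with $a(M_0)\neq a(M_1)$. I would choose $a=0$ and $a=1$, or two consecutive values suited to the parity of $|X_1|$. The equations above then give the $m_i$ and $b$. Any nonnegative integer solution is realisable, because $X_1$ and $X_{k+1}$ are cliques and consecutive layers are complete bipartite: peel off the prescribed number of edges layer by layer.

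The main obstacle is checking the nonnegativity and integrality conditions ($m_i\ge 0$, $b\ge 0$ an integer) for both values of $a$, given the uneven distribution of the $n \bmod (k+1)$ leftover vertices among $X_2$, $X_k$, $X_1$, $X_{k+1}$. Since $|X_i|\approx n/(k+1)\ge 4$, the alternating sums $m_i=|X_i|-|X_{i-1}|+\dots\pm(|X_1|-2a)$ stay within a small bounded deviation of $|X_i|/2$-type quantities. The hypothesis $n\ge 4(k+1)$ should give enough room to lower $a$ by one from a balanced solution while keeping all quantities nonnegative. The parity of $b$ works out because $n$ is even.
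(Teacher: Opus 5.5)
Your proof is correct. The existence half matches the paper in substance, but your invariance step takes a genuinely different route.

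\textbf{The paper's route.} It argues cycle by cycle. An alternating cycle of length at most $2k$ that uses an edge inside $X_1$ cannot reach $X_{k+1}$, because the two sets are at distance $k$. So the cycle splits into runs inside $X_1$ and runs in the layered graph on $V\setminus X_{k+1}$. That graph is bipartite with $X_1$ on one side, so these runs have even length, and the switch leaves the number of matching edges inside $X_1$ unchanged.

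\textbf{Your route.} You replace this distance/parity argument by a global count. The layer equations show that $a$ determines every other edge-class count. A change of $d$ in $a$ then forces at least $|d|$ edges of $M\Delta N$ inside each of $X_1$ and $X_{k+1}$, and at least $2|d|$ between each pair of consecutive layers. Hence $|M\Delta N|\ge (2k+2)\,|a(M)-a(N)|$.

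\textbf{What each buys.} Your version gives a quantitative distance bound and needs no decomposition of the switch into cycles. It also transfers verbatim to the cyclic blow-up of Lemma~\ref{lem:blow-up_cycle}, where each of the $2k+2$ classes changes by $|d|$. The same system of equations also parametrises the feasible values of $a$, so the existence half becomes a concrete feasibility check rather than the bare assertion the paper makes. The paper's version is more local: it shows directly why a single short alternating cycle cannot move an edge into or out of $X_1$.

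\textbf{Closing the existence step.} You leave this as a sketch, but it closes easily. Put $s=\lfloor n/(k+1)\rfloor\ge 4$ and place the leftover vertices in $X_2,X_k,X_1,X_{k+1}$ as prescribed. A case check on $n \bmod (k+1)$ shows that every $m_i$ and $2b$ lies in $\{2a,2a+1,2a+2\}\cup\{s-2a-2,\dots,s-2a+1\}$. So $a=0$ and $a=1$ are both feasible, and the parity of $2b$ is guaranteed by $n$ being even.

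\textbf{Caveat.} Your ``bounded deviation'' claim depends essentially on that placement. If many leftover vertices sat on layers of the same parity, the alternating sums could exceed $s$, and $F_{k,n}$ could have no perfect matching at all. You should therefore tie that step explicitly to the construction's placement rule.
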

\begin{proof}
  Let $E_1$ be the subset of edges with both endpoints in $X_1$ and $E_2$ the
  remaining edges. The number of matched edges of $E_1$ in a
  perfect matching is invariant under $k$-switches. Indeed, let $M$ be a perfect
  of $F_{k,n}$ and $C$ an alternating cycle of $M$ of length at most $2k$
  containing some edge of $X_1$. The distance between $X_1$ and $X_{k+1}$ is
  $k$, so $C$ does not intersect $X_{k+1}$. Hence, $C$ can be decomposed as the
  concatenation of paths $P_1,Q_1, \dots P_\ell, Q_\ell$, where the paths $P_i$
  use only edges of $E_1$ and the paths $Q_i$ use only edges of $E_2$. In
  particular, each $Q_i$ has even length, because the subgraph on the vertex set
  $V(F_{k,n}) \setminus X_{k+1}$ containing the edges $E_2$ is bipartite. So
  performing a switch on the alternating cycle $C$ preserves the number of edges
  in $E_2$ and thus the number of edges in $E_1$.

  There exists a perfect matching with exactly $p$ edges
  of $E_1$ for each $p \in \{1, \dots \lfloor n/(2(k+1)) \rfloor\}$, so
  $\mathcal H_{k}(F_{k,n})$ is disconnected because $n \ge 4(k+1)$.
\end{proof}

We now construct an analogous balanced bipartite graph. Let $k \ge 2$ and $n\ge 2(k+1)$ be two integers. Assume for now that $n$ is a
multiple of $(k+1)$. Let $F_{k,n}^{(\mathcal B)}$ be the balanced bipartite graph
on $2n$ vertices obtained by replacing each vertex of a $2(k+1)$ cycle by an
independent set of size $n/(k+1)$ (see Figure~\ref{fig:non_connected_precise}). Note
that $F_{k,n}^{(\mathcal B)}$ is $2n/(k+1)$-regular.
\begin{figure}[ht!]
  \centering        
  \begin{tikzpicture}
    \pgfmathsetmacro{\p}{3}
    \foreach \i in {1,2,3}{
      \foreach \j in {1,2}{
        \draw (2*\j, 1.5*\i) circle[radius=.5cm];
        \foreach \x in {1,...,\p}{
          \node[circle, draw = black, inner sep = 1pt] (a\j\i\x) at
          ($(2*\j,1.5*\i)+(360/\p*\x:.3cm)$) {};
        }
      }
    }

    \foreach \a/\b in {1/2,3/2,2/3,2/1}{
      \foreach \x in {1,...,\p}{
        \foreach \y in {1,...,\p}{
          \draw (a1\a\x) -- (a2\b\y);
        }
      }
    }
    \foreach \x in {1,...,\p}{
      \foreach \y in {1,...,\p}{
        \draw[red] (a11\x) -- (a21\y);
        \draw[blue] (a13\x) -- (a23\y);
      }
    }

    \node at (3,.8) {\textcolor{red}{$E_1$}};
    \node at (3,5.2) {\textcolor{blue}{$E_3$}};

  \end{tikzpicture}
  \caption{The graph $F_{2,n}^{(\mathcal R)}$, i.e. a blow-up of the vertices of a
    6-cycle into independent set of size $n/3$. The edges in the sets $E_1$ and $E_3$ (defined in the proof of Lemma~\ref{lem:blow-up_cycle}) are drawn in red and blue respectively.}\label{fig:bipartite_2n3}
\end{figure}
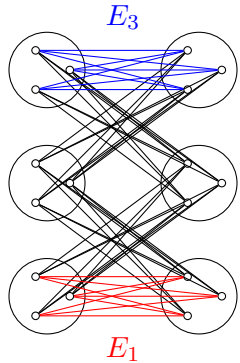

Now if $n$ is not a multiple of $(k+1)$, we construct $F_{k,n}^{(\mathcal B)}$
similarly, by distributing the additional vertices one-by-one in pairs of consecutive blown-up
vertices. The graph we obtain is not regular anymore, but still balanced
bipartite and we have $\delta(F_{2,n}) = \lfloor 2n/3 \rfloor$ and
$\delta(F_{3,n})= \lfloor n/2 \rfloor$.

\begin{lemma}\label{lem:blow-up_cycle}
  Let $k \ge 2$ and $n \ge (k+1)$ an integer. The switch graph $\mathcal H_{k}(F_{k,n}^{(\mathcal B)})$ is disconnected.
\end{lemma}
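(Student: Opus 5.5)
Label the blown-up vertices of the $2(k+1)$-cycle $W_0, W_1, \dots, W_{2k+1}$ (indices mod $2(k+1)$), so $V_1=\bigcup_i W_{2i}$ and $V_2=\bigcup_i W_{2i+1}$, and write $E_j$ for the set of edges between $W_{j}$ and $W_{j+1}$. The plan is to find an invariant of perfect matchings that is preserved by $k$-switches but takes at least two values. The natural candidate is a winding number. Orient the cycle and, for a perfect matching $M$, count edges by their position along the cycle. Since $|W_j|$ must be matched into $W_{j-1}\cup W_{j+1}$, the quantities $m_j=|M\cap E_j|$ satisfy $m_{j-1}+m_j=|W_j|$. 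Hence the whole vector $(m_j)$ is determined by a single value, say $m_0$. The invariant will be $m_0=|M\cap E_0|$ (this matches the figure, where the relevant classes are $E_1$ and $E_3$).

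First, I show that a $k$-switch preserves $m_0$. Let $C$ be an alternating cycle of length at most $2k$. Its projection onto the $2(k+1)$-cycle (map each vertex of $W_j$ to $j$) is a closed walk of length at most $2k<2(k+1)$. Such a walk cannot wind around the cycle, so it has winding number zero. A closed walk with winding number zero traverses each edge class $j$–$(j+1)$ equally often in each direction. Consequently, for each $j$, the edges of $C$ lying in $E_j$ come in forward/backward pairs. The remaining point is to check that switching changes $|M\cap E_j|$ consistently. By the relation $m_{j-1}+m_j=|W_j|$, the change $\Delta m_j$ alternates in sign as $j$ moves around the cycle. In particular, if $\Delta m_0\neq 0$, then every $\Delta m_j$ is nonzero, so $C$ would contain edges of every class $E_j$. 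A walk using all $2(k+1)$ edge classes of the cycle without winding must use each class at least twice, giving length at least $4(k+1)>2k$, a contradiction. This second argument is cleaner and avoids the direction bookkeeping entirely, so I will use it: if $C$ misses some class $E_j$, then $\Delta m_j=0$, and the alternation forces $\Delta m_0=0$.

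Second, I exhibit two perfect matchings with different values of $m_0$. Assume first that $(k+1)\mid n$, so every part has size $s=n/(k+1)\ge 1$. One matching pairs $W_{2i}$ with $W_{2i+1}$ for all $i$, giving $m_0=s$. Another pairs $W_{2i+1}$ with $W_{2i+2}$, giving $m_0=0$. When $(k+1)\nmid n$, the parts are uneven, and more generally $m_0$ can take any value in an interval determined by the constraints $0\le m_j$ with $m_j=|W_j|-m_{j-1}$. I need this interval to contain at least two integers. Because the extra vertices were added in pairs of consecutive blown-up vertices, there is enough slack: one can take the two basic matchings on the "core" parts of size $\lfloor n/(k+1)\rfloor\ge1$, each extended by matching the extra pairs among themselves. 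These two matchings then differ in $m_0$ by $\lfloor n/(k+1)\rfloor\ge 1$.

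The main obstacle I expect is the non-divisible case. There I have to verify that the extra pairs can be matched internally in both matchings, which depends on exactly which consecutive parts received them. If the extra pair straddles $E_0$, I will instead measure the invariant on a different edge class $E_j$ that does not receive extras.
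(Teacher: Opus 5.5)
Your proof is correct, and its invariance step takes a genuinely different route from the paper's.

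\textbf{The paper's route.} It fixes the antipodal classes $E_1$ and $E_3$ and notes that no cycle of length at most $2k$ meets both. It then splits a switching cycle through $E_1$ into $E_1$-segments and segments using only the remaining edges, and argues these latter segments have even length. Hence the number of matched edges outside $E_1$, and therefore inside $E_1$, is unchanged.

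\textbf{Your route.} You use the identity $m_{j-1}+m_j=|W_j|$, which holds for every perfect matching of any blow-up of $C_{2(k+1)}$. It forces $|\Delta m_j|$ to be independent of $j$. Since $|M\Delta M'|\le 2k<2(k+1)$, some class is untouched by the switch, so every $\Delta m_j$ vanishes.

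\textbf{Comparison.} Your argument is shorter and more robust. The winding-number discussion, and the ``each class at least twice'' bound, are not needed: plain counting of the at most $2k$ edges of $M\Delta M'$ against the $2k+2$ classes suffices. This counting also handles a symmetric difference made of several cycles, the part sizes play no role, and it shows the whole vector $(m_j)$ is determined by $m_0$. The paper's decomposition has the merit of following the same template as Lemma~\ref{lem:non_connected_precise}. There the cliques on $X_1$ and $X_{k+1}$ leave only $k+2$ edge classes, so counting alone would not rule out a short switch.

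\textbf{The non-divisible case.} The obstacle you anticipate does not arise. Each added pair consists of adjacent vertices in consecutive parts, so you can match it internally. It then contributes the same amount to $m_0$ in both of your matchings, wherever it sits, and the two matchings still differ by $\lfloor n/(k+1)\rfloor\ge 1$. By your own identity, moving the invariant to another class $E_j$ would change nothing anyway.
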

\begin{proof}
  This proof is similar to that of Lemma~\ref{lem:non_connected_precise}. Denote by $u_1, \dots, u_{2(k+1)}$ the vertices of $C_{2(k+1)}$. In $F_{k,n}^{(\mathcal B)}$, denote by $E_1$ the set of edges joining the blow-ups of  $u_1$ and $u_2$, $E_3$ the set of edges joining the blow-ups of $u_{k+1}$ and $u_{k+2}$, and $E_2$ the set of remaining edges. As in
  Lemma~\ref{lem:non_connected_precise}, the number of matched edges of $E_1$ in a
  perfect matching is invariant under $k$-switches. Indeed, any cycle of length at most $2k$ cannot simultaneously contain an edge of $E_1$ and an edge of $E_3$. Thus an alternating cycle $C$ of length at most $2k$ containing an edge of $M \cap E_1$ can be decomposed as the concatenation of paths $P_1$, $Q_1$ \dots $P_{\ell}$, $Q_\ell$ where the paths $P_i$ use only edges of $E_1$ and the paths $Q_i$ only edges of $E_2$. In particular, as the edges of $E_2$ form a bipartite graph, the paths $Q_i$ are of even length and a switch on $C$ preserves the number of edges in $E_2$ and thus the number of edges in $E_1$. For each
  $p \in \{0, \dots \lfloor n/(k+1) \rfloor\}$, there exists a perfect matching with exactly $p$
  edges of $E_1$, so $\mathcal H_{k}(F_{k,n}^{(\mathcal R)})$ is disconnected.
\end{proof}

Lemmas~\ref{lem:non_connected_precise} and~\ref{lem:blow-up_cycle} directly give us tighter
lower bounds than Lemmas~\ref{lem:non_connectedness} and~\ref{lem:non_connectedness_bip} on the
connected thresholds for $k \in \{2,3\}$, as well as a lower bound on the
connectedness threshold for balanced bipartite regular graphs.
\begin{corollary}\label{cor:connectedness_lower}
  For $n$ large enough, we have the following inequalities.
  \begin{itemize}
  \item 
  $\delta^{\mathbf{connect}}_2(n) \ge \left\lfloor \frac{2n+1}3
    \right\rfloor$.
  \item 
  $\delta^{\mathbf{connect}}_3(n) \ge \frac{n}2$.
  \item 
  $\delta^{\mathbf{connect}}_{2,\mathrm{bip}}(n) = \lfloor \frac{2n}3 \rfloor+1$.
  \item 
  $\delta^{\mathbf{connect}}_{3,\mathrm{bip}}(n) = \lfloor \frac{n}2 \rfloor+1$.
  \item 
  For all $k \ge 2$, if $(k+1)$ divides $n$, then $\delta^{\mathbf{connect}}_{k,\mathrm{reg}}(n) \ge 2\frac{n}{k+1}+1$.
  \end{itemize}
\end{corollary}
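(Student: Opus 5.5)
The plan is to read off each item of Corollary~\ref{cor:connectedness_lower} from the constructions of Subsection~\ref{ssec:precise}. Where an equality is claimed, the matching upper bound comes from Section~\ref{sec:connectedness}.

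\emph{General graphs.} By Lemma~\ref{lem:non_connected_precise}, for every even $n\ge 4(k+1)$ the switch graph $\mathcal H_k(F_{k,n})$ is disconnected. Hence $\delta^{\mathbf{connect}}_k(n) > \delta(F_{k,n})$. I take the minimum degrees computed in the construction, $\delta(F_{2,n})=\lfloor (2n-2)/3\rfloor$ and $\delta(F_{3,n})=n/2-1$. It remains to check that $\lfloor(2n-2)/3\rfloor+1=\lfloor(2n+1)/3\rfloor$, which is immediate. For $k=3$ this gives $\delta^{\mathbf{connect}}_3(n)\ge n/2$. The only real work is double-checking the degree computation when $(k+1)\nmid n$, where the leftover vertices are placed in $X_2$, then $X_k$, then $X_1,X_{k+1}$. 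One must verify that the minimum is attained in $X_{k+1}$, whose vertices have degree $|X_{k+1}|-1+|X_k|$. I expect this case check on $n \bmod 3$ and $n \bmod 4$ to be the main (minor) obstacle.

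\emph{Bipartite graphs.} By Lemma~\ref{lem:blow-up_cycle}, $\mathcal H_k(F^{(\mathcal B)}_{k,n})$ is disconnected. With $\delta(F^{(\mathcal B)}_{2,n})=\lfloor 2n/3\rfloor$ and $\delta(F^{(\mathcal B)}_{3,n})=\lfloor n/2\rfloor$, this gives the lower bounds $\lfloor 2n/3\rfloor+1$ and $\lfloor n/2\rfloor+1$. Again one should verify these minimum degrees for the uneven distribution of the extra vertices among consecutive blown-up classes. The matching upper bounds are exactly the bipartite statements of Theorems~\ref{thm:2SwitchMatching} and~\ref{thm:3SwitchMatching}, which yield the two equalities.

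\emph{Regular bipartite graphs.} When $(k+1)\mid n$, the graph $F^{(\mathcal B)}_{k,n}$ is $2n/(k+1)$-regular, balanced and bipartite, and by Lemma~\ref{lem:blow-up_cycle} its $k$-switch graph is disconnected. Hence $\delta^{\mathbf{connect}}_{k,\mathrm{reg}}(n)\ge 2n/(k+1)+1$. The condition ``$n$ large enough'' is used only to satisfy the size hypotheses $n\ge 4(k+1)$ and $n\ge k+1$ of the two lemmas.
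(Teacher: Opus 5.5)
Your proposal is correct and matches the paper's argument: the lower bounds come from $\delta(F_{k,n})$ and $\delta(F^{(\mathcal B)}_{k,n})$ via Lemmas~\ref{lem:non_connected_precise} and~\ref{lem:blow-up_cycle}, and the two bipartite equalities come from the bipartite parts of Theorems~\ref{thm:2SwitchMatching} and~\ref{thm:3SwitchMatching}. When you carry out the deferred degree check, take care with $k=3$ and $n\equiv 2 \pmod 4$: the two extra pairs must be placed antipodally on the $8$-cycle (e.g.\ on classes $1,2$ and $5,6$), because two adjacent pairs (classes $1,2$ and $3,4$) leave the vertices of class $6$ with degree only $\lfloor n/2\rfloor-1$.
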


\section{Random sampling}\label{sec:random_sampling}
Similarly as for our connectedness results, we first prove in
Subsection~\ref{ssec:mixing4} that the $4$-switch Markov chain mixes polynomially, before
deriving from it the polynomial mixing of the $2$-switch and $3$-switch Markov
chain in Subsection~\ref{ssec:mixing23}. 

\subsection{Polynomial mixing time of the 4-switch Markov chain}\label{ssec:mixing4}
In the rest of this section, $G$ is either an $n$-vertex graph or a balanced
bipartite graph on $2n$ vertices. Consider the following Markov chain $\Gamma$
on the perfect matchings of $G$. Given a perfect matching $M$ of $G$:
\begin{enumerate}
\item Select independently and uniformly at random four vertices $u_1,u_2, u_3$
  and $u_4$, (if $G$ is balanced bipartite, select all them from the same side of the
  bipartition). Let $u_iv_i$ be the edge of $M$ for each $i$.
\item If $u_l = u_1$ for some $l \in \{2,3,4\}$, let $l$ be the smallest such
  index and let \[C = (u_1, v_1, u_2, v_2 \dots u_{l-1}, v_{l-1}).\] Otherwise, let
  $C$ be the sequence $(u_1, v_1, u_2, v_2, u_3, v_3, u_4, v_4)$.
\item If $C$ is a simple cycle, set $M' \leftarrow M \Delta C$ with probability
  $1/2$ and $M' \leftarrow M$ with probability $1/2$.
\end{enumerate}

Note that the probability of performing a $4$-switch on a fixed cycle $C$ of
length $\ell \in \{4,6,8\}$ is $\Theta(1/n^{\ell/2})$. Moreover, the transition
matrix of $\Gamma$ is symmetric, so $\Gamma$ admits the uniform distribution on
the space of perfect matchings of $G$ as a stationary distribution. The
probability of not performing any switch is at least $1/2$ because of laziness
of the third step, hence $\Gamma$ is also aperiodic. Hence, by
Theorem~\ref{thm:4Switch}, $\Gamma$ is ergodic and converges towards the uniform
distribution on perfect matchings if $G$ has sufficiently high minimum degree:

\begin{theorem}\label{thm:mixing}
  Let $G$ be an $n$-vertex graph in which the degrees of each pair of non-adjacent
  vertices sum up to at least $n+2$. 
  Alternatively, let $G$ be a balanced bipartite graph on $2n$ vertices in which
  each pair of non-adjacent vertices in different halves of the bipartition  have
  their degrees that sum up to at least $n+1$.\\
  The Markov chain $\Gamma$ on the perfect matchings of $G$ converges towards
  the uniform distribution on the set of perfect matchings of $G$. It has mixing
  time
  $$\tau_{\mix} \le O(n^8\log(n)).$$
\end{theorem}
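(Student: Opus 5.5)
We will use the canonical paths method of Subsection~\ref{ssec:Markov}, with $\pi$ uniform on $\mathcal M_G$.

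\textbf{Canonical paths.} For two perfect matchings $M,M'$ we follow the proof of Theorem~\ref{thm:4Switch}, which shrinks $|M\Delta M'|$ by repeated switches of length at most $8$. To get controlled congestion, we make this deterministic and \emph{local}. Fix an ordering of the vertices. This induces an ordering of the alternating cycles of $M\Delta M'$, and for each cycle a start vertex and a direction. We then process the cycles one at a time, in the style of Jerrum--Sinclair.

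To process a cycle $C=u_1\dots u_{2m}$, we repeatedly take the first six vertices $u_1\dots u_6$ of the current subpath. If $u_1$ is adjacent to $u_4$ or $u_6$, we close a $4$- or $6$-cycle and switch. Otherwise we choose the smallest edge $xy\in M\cap M'$ with $y\sim u_1$ and $x\sim u_6$; Theorem~\ref{thm:4Switch} guarantees it exists, since the degree-sum hypothesis $n+2$ gives slack. We then switch $u_1\dots u_6xy$. This creates a single ``displaced'' edge, a temporary deviation from $M\cup M'$: after the switch, $u_1y$ and $u_6x$ are present instead of $xy$.

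We must make sure the displaced pair does not accumulate. The next step should use $y$ or $x$ as the new endpoint of the remaining alternating path, so that at every moment the current matching $Z$ differs from the ``interpolation'' $M$-on-unprocessed, $M'$-on-processed by $O(1)$ edges. Each path then has length $O(n)$.

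\textbf{Congestion via an encoding.} For a transition $(Z,Z')$ on the path $\gamma_{M,M'}$, define
\[\eta(M,M')=M\Delta M'\Delta Z \cup(\text{correction}).\]
As in Jerrum--Sinclair, $M\Delta M'\Delta Z$ is a perfect matching except for $O(1)$ defects near the current cycle and the displaced edge $xy$. We repair it into a perfect or near-perfect matching and record $O(1)$ extra vertices as side information, costing a factor $n^{O(1)}$.

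We then show that $(Z,Z',\eta,\text{side info})$ determines $(M,M')$, so that $|\cp(Z,Z')|\le n^{O(1)}\,|\{\text{near-perfect matchings}\}|$. By Lemma~\ref{lem:near-perfect}, this is at most $n^{O(1)}|\mathcal M_G|$; its hypothesis holds since $n+2\ge n-1$. Combining this with $P(Z,Z')=\Omega(n^{-4})$, $|\gamma|=O(n)$, and $\pi$ uniform, we get
\[\varrho\le O(n\cdot n^4\cdot n^{c})\cdot\frac{|\mathcal M_G|\cdot n^{O(1)}}{|\mathcal M_G|}.\]
Tuning the encoding should give $n^{2}$ side information and so $\varrho=O(n^7)$. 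Then $\tau_{\mix}\le 2\varrho\log|\mathcal M_G|=O(n^8\log n)$, using $\log|\mathcal M_G|=O(n\log n)$. The bipartite case is identical, using the bipartite footnotes of Theorem~\ref{thm:4Switch} and Lemma~\ref{lem:near-perfect}.

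\textbf{Main obstacle.} The delicate part is the injectivity of the encoding, because the auxiliary edge $xy\in M\cap M'$ is borrowed. One must make sure that, from $\eta$ and $Z$, one can recover which edge was borrowed and which cycle is currently being processed. I would handle this by including the endpoints $x,y$ and the current cycle start in the side information, which is $O(n^2)$ choices. I would also check carefully that at most one borrowed edge is outstanding at any time, which may require restructuring the order of switches so that each borrowed edge is returned within the next step.
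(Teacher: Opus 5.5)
Your skeleton matches the paper's. You use Jerrum--Sinclair canonical paths that process the alternating cycles of $M\Delta M'$ in a fixed order. Each cycle is unwound from its smallest vertex by switches on cycles of length at most $8$, which temporarily borrow a single edge and return it at the next step. You then encode into near-perfect matchings and apply Lemma~\ref{lem:near-perfect}.

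\textbf{The borrowed edge need not exist.} You borrow $xy\in M\cap M'$ and say Theorem~\ref{thm:4Switch} provides it; it does not. The pigeonhole argument there ranges over all of $V(G)\setminus\{u_1,\dots,u_6\}$ and produces an edge of the \emph{current} matching, with no control over whether it lies in $M\cap M'$. If, say, $M\Delta M'$ is a Hamilton cycle, then $M\cap M'=\emptyset$ and your canonical path is undefined.

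Once the borrowed edge is allowed to be any edge of the current matching $Z$ (which is all the degree condition gives), it may sit in several places: on an already processed cycle, on an unprocessed one, or on the not-yet-processed part of the current cycle, e.g.\ on the $Z$-edges just after the six vertices you are examining. In that last situation the structure after the switch is not what your ``continue from $y$ or $x$'' rule assumes. Depending on orientation, the remaining cycle is rerouted, or a $4$-cycle away from $u_1$ splits off and must be closed by a $2$-switch that does not pass through the anchor. The paper handles exactly this with its representative/misleading invariant (at most one deceptive edge, hanging off $v_1$ and the current frontier vertex $v_{2k}$) and its Cases 3--5. Your injectivity discussion, premised on $xy\in M\cap M'$, does not cover it.

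\textbf{Your counting does not give the claimed exponent.} Lemma~\ref{lem:near-perfect} already costs a factor $n^2$. If you also record $x,y$ and the cycle start as $O(n^2)$ side information, you get $|\cp(Z,Z')|=O(n^4|\mathcal M_G|)$. With $P(Z,Z')=\Omega(n^{-4})$ and $|\gamma|\le n$ this yields only $\varrho=O(n^9)$, i.e.\ $\tau_{\mix}=O(n^{10}\log n)$. To reach $O(n^7)$, the side information for a genuine $4$-switch must be $O(1)$. This is possible because the anchor and the deceptive edges always lie on the switched cycle $Z\Delta Z'$, which has at most $8$ vertices. The paper stores only the rank of the anchor within that cycle, plus one bit saying which of the two unmatched vertices of the encoding is which.

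The paper's encoding also never needs your ``correction'', because it does not depend on $Z$ at all. It agrees with the start matching on processed cycles and on the finished part of the current cycle, and with the end matching elsewhere. It misses exactly two vertices of the current cycle ($v_2$ and $v_{2k-1}$ in the paper's notation). The only transition avoiding the anchor is the $2$-switch of Case 3. There, naming two extra vertices costs $n^2$, but this is offset by its transition probability $\Theta(n^{-2})$.
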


The proof of the mixing time of Theorem~\ref{thm:mixing} is a textbook application of
the canonical paths method. The canonical paths we will use are close to the
reconfiguration sequences used to prove Theorem~\ref{thm:4Switch}, but operate on the
symmetric difference between the endpoints of the path following a precise
order.
\begin{proof}

  To bound the congestion of our Markov chain $\Gamma$, we will define a
  canonical path $\gamma_{S,T}$ connecting each pair of perfect matchings $S$
  and $T$. Consider an arbitrary order on the vertices of $G$. Given $S$ and $T$
  two perfect matchings of $G$, our canonical path between $S$ and $T$ is
  defined as follows. Let $C_1, \dots C_p$ be the alternating cycles in
  $S \Delta T$, such that for each $i$, the smallest vertex in $C_i$ is smaller
  than the smallest vertex in $C_{i+1}$. Let $M_1, \dots M_{p+1}$ be the perfect
  matchings of $G$ such that $M_i$ is equal to $T$ on all cycles $C_j$ with
  $j < i$, equal to $S$ on all cycles $C_j$ with $j \ge i$ and equal to $S$ (and
  $T$) around all remaining vertices. Our canonical path will go from $S=M_1$ to
  $T=M_{p+1}$ and will visit all $M_i$ in increasing order. Thus we only need to
  explain how to go from $M_i$ to $M_{i+1}$.

  \subsubsection*{Connecting $M_i$ to $M_{i+1}$}
  Let $v_1, v_2, \dots v_{2q}$ be the consecutive vertices of $C_i$, such that
  $v_1$ is the smallest vertex and $v_1v_2 \in M_i$.
  \begin{claim}\label{cl:connecting_Mi}
    There exists a sequence $M_i= N_0, \dots N_q= M_{i+1}$ of
    perfect matchings of $G$ such that for all $j$, $N_{j-1}$ and $N_j$ differ
    by a $4$-switch and one of the following conditions is verified:
    \begin{itemize}
    \item There exists $k \in [q]$ such that $N_j$ is equal to $M_i$ on
      $G \setminus C_i$ and contains the edges
      $v_1v_{2k}, v_{2k+1}v_{2k+2}, \dots v_{2q-1}v_{2q}$ and
      $v_2v_3, \dots v_{2k-2}v_{2k-1}$. In other words, $N_j$ and $M_i$ are
      equal on all vertices but $\{v_1, \dots v_{2k}\}$, where $N_j$ contains
      $v_1v_{2k}$ and is equal to $M_{i+1}$ on all other vertices. We will say
      that $N_j$ is \defin{representative} and call $k$ its \defin{progress index}.
    \item There exists $k \in [q]$ and $xy \in N_{j-1}$ such that $N_j$ is equal
      to $M_i$ on $G \setminus (C_i \cup \{x,y\})$ and contains the edges
      $v_1x, yv_{2k}, v_{2k+1}v_{2k+2}, \dots v_{2q-1}v_{2q}$ and
      $v_2v_3, \dots v_{2k-2}v_{2k-1}\}$. In other words, $N_j$ and $M_i$ are
      equal on all vertices but $\{v_1, \dots v_{2k}, x, y\}$, where $N_j$
      contains $v_1x$ and $yv_{2k}$ and is equal to $M_{i+1}$ on all other
      vertices. Moreover, neither $v_{2k}$ nor $v_{2k+2}$ are adjacent to
      $v_1$. We will call such $N_j$ \defin{misleading}, $k$ its \defin{progress
        index} $xy$ the \defin{deceptive} edge.
    \end{itemize}
  \end{claim}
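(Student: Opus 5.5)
We build the sequence greedily, mimicking the perfect-matching case of Theorem~\ref{thm:4Switch}, while keeping every intermediate matching representative or misleading. Start with $N_0=M_i$, which is representative with progress index $k=1$: it agrees with $M_i$ everywhere and contains $v_1v_2$, the degenerate case of ``$v_1v_{2k}$''. We maintain the invariant that the current matching is representative with progress index $k$. Terminating at $k=q$ gives exactly $M_{i+1}$, since it contains $v_1v_{2q}$ and agrees with $M_{i+1}$ on the remaining vertices of $C_i$.

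Suppose $N_j$ is representative with index $k<q$. Then $N_j$ restricted to $C_i$ looks like a shorter alternating cycle $v_1v_{2k}v_{2k+1}\dots v_{2q}$ relative to $M_{i+1}$, in which $v_1v_{2k}$ is the current ``shortcut'' edge.

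\textbf{Step 1: direct advances.} If $v_1v_{2k+2}\in E(G)$, the $2$-switch on the $4$-cycle $v_1v_{2k}v_{2k+1}v_{2k+2}$ yields a representative matching of index $k+1$. More generally, if $v_1v_{2k+4}$ or $v_1v_{2k+6}$ is an edge, a $3$- or $4$-switch advances the index by $2$ or $3$. To keep every step as a single advance, we apply only the $4$-cycle move when $v_1v_{2k+2}$ is present. We also take $k=q-1$ as a boundary case: there $v_{2q}$ is adjacent to $v_1$ through the $M_{i+1}$ edge, so the final step is always available.

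\textbf{Step 2: blocked advances.} Suppose $v_1v_{2k+2}\notin E(G)$. First, if $v_1v_{2k}$ is itself an edge of $M_{i+1}$, we are finished. Otherwise we reproduce the pigeonhole argument of Theorem~\ref{thm:4Switch} on the path $v_1,v_{2k},v_{2k+1},v_{2k+2}$, extended if needed. Using the Ore-type hypothesis $\deg(v_1)+\deg(v_{2k+2})\ge n+2$ (respectively $\ge n+1$ in the bipartite case), we obtain an edge $xy\in N_j$ disjoint from the cycle with $v_1x\in E$ and $yv_{2k+2}\in E$. The $3$-switch removing $v_1v_{2k}$, $v_{2k+1}v_{2k+2}$ and $xy$, and adding $v_{2k}v_{2k+1}$, $v_1x$ and $yv_{2k+2}$, produces a misleading matching of index $k+1$ with deceptive edge $xy$. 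The non-adjacency requirements in the misleading definition are exactly the ``blocked'' hypotheses. From a misleading matching of index $k$ with deceptive edge $xy$, there are two possibilities. If $v_{2k+2}$ is adjacent to $v_1$, a $4$-switch on $v_1 x y v_{2k} v_{2k+1} v_{2k+2}$ restores $xy$ and yields a representative matching of index $k+1$. Otherwise we advance the $yv_{2k}$ end: remove $yv_{2k}$ and $v_{2k+1}v_{2k+2}$, add $v_{2k}v_{2k+1}$ and $yv_{2k+2}$. This requires $yv_{2k+2}\in E$, which we arrange by choosing $xy$ with foresight, or by re-choosing the deceptive edge via a fresh counting argument that uses the degree-sum bound with three vertices excluded.

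\textbf{Counting and the main obstacle.} Each step increases the progress index by exactly one, and the sequence ends at $k=q$, so the length comes out as $q$, as claimed. Some care is needed to ensure that a misleading-to-representative step consumes one unit of progress, and not zero. The main obstacle is the misleading case. We must guarantee that a suitable deceptive edge exists whose endpoint $y$ stays adjacent to the next target vertex, while every move remains a $4$-switch and the matchings keep exactly the stated form. The plan is to redo the pigeonhole count of Theorem~\ref{thm:4Switch}, excluding the $O(1)$ vertices $v_1,v_{2k},\dots,v_{2k+2},x,y$. This is where the slack of $+2$ in the degree-sum condition, rather than $+1$, will be consumed.
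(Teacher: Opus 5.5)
Your outline has the right architecture: alternate between representative and misleading matchings, and find a detour edge with the pigeonhole count of Theorem~\ref{thm:4Switch}. But the step you yourself call ``the main obstacle'' is where the whole proof lies, and the move you propose there fails.

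\textbf{The misleading step.} Take a misleading matching of index $k$ with deceptive edge $xy$ and $v_1v_{2k+2}\notin E(G)$. You keep $xy$ out of the matching and swing $y$ from $v_{2k}$ to $v_{2k+2}$. The resulting $N_j$ contains $v_1x$ and $yv_{2k+2}$, so its deceptive edge would have to be $xy$. The claim, however, requires the deceptive edge to lie in $N_{j-1}$, and $xy\notin N_{j-1}$. This requirement keeps each deceptive edge short-lived, and the congestion analysis exploits that. Your move also needs $yv_{2k+2}\in E(G)$, and no choice ``with foresight'' can secure it: $v_1$ may miss an unbounded run $v_{2k+2},v_{2k+4},\dots$, and nothing makes a single $y$ adjacent to all of them. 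Your fallback, ``re-choosing the deceptive edge'', never names a switch.

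What works is a single $4$-switch on the $8$-cycle $v_1xyv_{2k}v_{2k+1}v_{2k+2}y'x'v_1$. It puts the old deceptive edge $xy$ back and installs a new one $x'y'\in N_{j-1}$. You get $x'y'$ by pigeonhole on $V'=V(G)\setminus\{v_1,x,y,v_{2k},v_{2k+1},v_{2k+2}\}$, taking $A$ to be the vertices of $V'$ matched in $N_{j-1}$ to neighbours of $v_1$, and $B=N(v_{2k+2})\cap V'$. Then $|V'|=n-6$, $|A|\ge\deg(v_1)-3$ and $|B|\ge\deg(v_{2k+2})-4$, so $|A|+|B|\ge n-5$. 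The bound on $|A|$ uses that $v_1$ is adjacent to neither $v_{2k}$ nor $v_{2k+2}$. This is the step where both the cap of four switched edges and the $+2$ in the degree sum are actually consumed.

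\textbf{Two further problems.}

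\emph{The ``Moreover'' clause.} Your misleading matchings need not satisfy it. When you pass from a representative matching of index $k$ to a misleading one of index $k+1$, you only know $v_1v_{2k+2}\notin E(G)$. The definition also demands $v_1v_{2k+4}\notin E(G)$, so these requirements are not ``exactly the blocked hypotheses''. This is why the paper abandons single-unit advances. From a representative matching it tests $v_{2(k+l)}$ for $l\in[3]$; from a misleading one it tests $v_{2(k+l+m)}$ for $m\in\{0,1\}$. If any of these is a neighbour of $v_1$, it performs a direct switch on a cycle of length at most $8$. Only otherwise does it create a misleading matching, and then at a new index $k'$ for which $v_{2k'}$ and $v_{2k'+2}$ are already known non-neighbours of $v_1$.

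\emph{Deceptive edges on $C_i$.} You ignore deceptive edges lying on $C_i$ just ahead of the progress index. Your $V'$ excludes only $v_1,v_{2k},v_{2k+1},v_{2k+2}$, so you may get $\{x,y\}=\{v_{2k+3},v_{2k+4}\}$. At index $k+1$ your $6$-cycle $v_1xyv_{2k+2}v_{2k+3}v_{2k+4}$ is then not simple. Moreover, the edge $v_{2k+3}v_{2k+4}$ you want to remove is not in the matching. The paper treats such configurations separately, with dedicated switches (Cases~3--5).
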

  \begin{poc}
    We build the sequence $N_0, \dots N_q$ inductively. Suppose we already
    constructed $N_0, \dots N_{j-1}$. We distinguish several cases, depending on
    whether $N_{j-1}$ and the matching $N_j$ we construct are representative or
    misleading.

    \paragraph{Case 1} First, assume that $N_{j-1}$ is representative, let $k$ be the progress
    index of $N_{j-1}$ and assume that there exists some $l \in [3]$ such that $v_{2(k+l)}$ is a
    neighbour of $v_1$. Then let $l$ be the largest such integer. Let $N_{j}$ be the
    perfect matching obtained by performing the $4$-switch on
    $C=v_1v_{2k}v_{2k+1}\dots v_{2(k+l)}$. The vertices affected by this $4$-switch
    all belong to $C_i$. Since $N_{j-1}$ was representative and $N_{j}$ contains the
    edges $v_1v_{2(k+l)}$, $v_{2(k+l)+1}v_{2(k+l)+2}$, \dots $v_{2q-1}v_{2q}$
    and $v_2v_3$, \dots $v_{2(k+l)-2}v_{2(k+l)-1}$, $N_j$ is also representative with
    progress index $k+l$ (see Figure~\ref{fig:FF}). 
    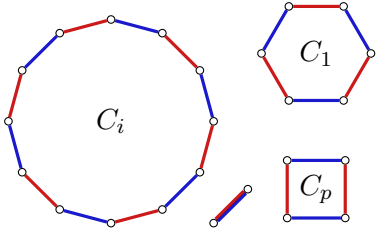
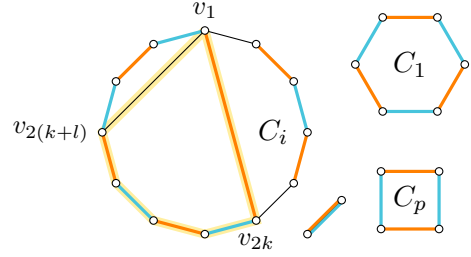
\begin{figure}[ht!]
      \centering
      \begin{subfigure}[t]{.45\linewidth}
        \centering
        \begin{tikzpicture}[scale = .9]
          \clip (-1.8,-1.8) rectangle (4,2);
          \foreach \i in {1, ..., 12}{
            \node[circle, draw = black, inner sep = 1pt] (v\i) at ($(120-\i*30:1.5cm)$) {};
          }
          \foreach \i in {1, ...,6}{
            \node[circle, draw = black, inner sep = 1pt] (u\i) at ($(3,1) +(120-\i*60:.8cm)$) {};
          }

          \foreach \i in {1, ...,4}{
            \node[circle, draw = black, inner sep = 1pt] (w\i) at ($(3,-1) +(45-\i*90:.6cm)$) {};
          }

          \node[circle, draw = black, inner sep = 1pt] (a) at (2,-1) {};
          \node[circle, draw = black, inner sep = 1pt] (b) at (1.5,-1.5) {};

          \foreach \i in {1,..., 6}{
            \pgfmathsetmacro{\I}{int(Mod(2*\i-1,12))}
            \pgfmathsetmacro{\II}{int(Mod(2*\i-1,12)+1)}
            \pgfmathsetmacro{\III}{int(Mod(2*\i,12)+1)}
            \draw[very thick, color3] (v\I) -- (v\II);
            \draw[very thick, color1] (v\II) -- (v\III);
            }

            \foreach \i in {1,2,3}{
              \pgfmathsetmacro{\I}{int(Mod(2*\i-1,6))}
              \pgfmathsetmacro{\II}{int(Mod(2*\i-1,6)+1)}
              \pgfmathsetmacro{\III}{int(Mod(2*\i,6)+1)}
              \draw[very thick, color3] (u\I) -- (u\II);
              \draw[very thick, color1] (u\II) -- (u\III);
            }

            \foreach \i in {1,2}{
              \pgfmathsetmacro{\I}{int(Mod(2*\i-1,4))}
              \pgfmathsetmacro{\II}{int(Mod(2*\i-1,4)+1)}
              \pgfmathsetmacro{\III}{int(Mod(2*\i,4)+1)}
              \draw[very thick, color3] (w\I) -- (w\II);
              \draw[very thick, color1] (w\II) -- (w\III);
            }
            \draw[very thick,side by side=color3:color1] (a) -- (b);

            \node (C1) at (3,1) {$C_1$};
            \node (C1) at (0,0) {$C_i$};
            \node (C1) at (3,-1) {$C_p$};

          \end{tikzpicture}
          \caption{The perfect matchings $S$, and $T$ are represented in
            blue and red respectively.}
      \end{subfigure}
      \hfill
      \begin{subfigure}[t]{.5\linewidth}
        \centering
        \begin{tikzpicture}[scale = .9]
          \clip (-2.9,-1.8) rectangle (4,2);
          \foreach \i in {1, ..., 12}{
            \coordinate (v\i) at ($(120-\i*30:1.5cm)$);
          }
          \foreach \i in {1, ...,6}{
            \coordinate (u\i) at ($(3,1) +(120-\i*60:.8cm)$);
          }

          \foreach \i in {1, ...,4}{
            \coordinate (w\i) at ($(3,-1) +(45-\i*90:.6cm)$);
          }

          \coordinate (a) at (2,-1);
          \coordinate (b) at (1.5,-1.5);

          \node (C1) at (3,1) {$C_1$};
          \node (C1) at (1,0) {$C_i$};
          \node (C1) at (3,-1) {$C_p$};

          \draw[line width=.12cm, opacity = .5, \colora] (v1) -- (v6) -- (v7) -- (v8)
          -- (v9) -- (v10) -- (v1);
          \foreach \i in {4,..., 6}{
            \pgfmathsetmacro{\I}{int(Mod(2*\i-1,12))}
            \pgfmathsetmacro{\II}{int(Mod(2*\i-1,12)+1)}
            \pgfmathsetmacro{\III}{int(Mod(2*\i,12)+1)}
            \draw[very thick, \colord] (v\I) -- (v\II);
            \draw[very thick, \colorb] (v\II) -- (v\III);
          }

          \draw (v2) -- (v1) -- (v10) (v5) -- (v6) ;
          \draw[very thick, \colord] (v1) -- (v6) (v2) -- (v3) (v4) -- (v5) ;
          \draw[very thick, \colorb] (v4) -- (v3) (v6) -- (v7) ;

          \foreach \i in {1,2,3}{
            \pgfmathsetmacro{\I}{int(Mod(2*\i-1,6))}
            \pgfmathsetmacro{\II}{int(Mod(2*\i-1,6)+1)}
            \pgfmathsetmacro{\III}{int(Mod(2*\i,6)+1)}
            \draw[very thick, \colorb] (u\I) -- (u\II);
            \draw[very thick, \colord] (u\II) -- (u\III);
          }

          \foreach \i in {1,2}{
            \pgfmathsetmacro{\I}{int(Mod(2*\i-1,4))}
            \pgfmathsetmacro{\II}{int(Mod(2*\i-1,4)+1)}
            \pgfmathsetmacro{\III}{int(Mod(2*\i,4)+1)}
            \draw[very thick, \colord] (w\I) -- (w\II);
            \draw[very thick, \colorb] (w\II) -- (w\III);
          }
          \draw[very thick,side by side=\colorb:\colord] (a) -- (b);

          \foreach \i in {1,...,12}{
            \node[circle, draw = black, fill= white, inner sep = 1pt] at (v\i) {};
          }
          \foreach \i in {1,...,6}{
            \node [circle, draw = black, fill= white, inner sep = 1pt] at (u\i) {};
          }
          \foreach \i in {1,...,4}{
            \node [circle, draw = black, fill= white, inner sep = 1pt] at (w\i) {};
          }
          \foreach \i in {a,b}{
            \node [circle, draw = black, fill= white, inner sep = 1pt] at (\i) {};
          }

          \node[above=1pt of v1] {\small $v_1$};
          \node[below=1pt of v6] {\small $v_{2k}$};
          \node[left=1pt of v10] {\small $v_{2(k+l)}$};
        \end{tikzpicture}
        \caption{The matchings $N_{j-1}$ and $L$ (defined in Claim~\ref{cl:analysis}), represented in orange and
          light blue respectively. $N_j$ is obtained by performing a $4$-switch on
          $C$, which is here highlighted in yellow.}  
      \end{subfigure}
      \caption{Constructing $N_{j}$ when $N_{j-1}$ is representative and $v_1$ is a
        neighbour of $v_{2(k+l)}$ for some $l \in [3]$ (here $l =2$). The
        resulting $N_{j}$ is also representative.}\label{fig:FF}
    \end{figure}

    \paragraph{Case 2}
    Assume now no such $l$ exists, but $N_{j-1}$ is still representative. Let
    $A$ be the subset of vertices of
    $V'=V(G) \setminus \{v_1, v_{2k}, v_{2k+1},\dots v_{2k+4}\}$ matched in
    $N_{j-1}$ to a neighbour of $v_1$. Since $v_{2k+2}$ and $v_{2k+4}$ are not
    neighbours of $v_1$, we have $|A| \ge \deg(v_1) - 3$\footnote{If $G$ is
      balanced bipartite, we set
      $V' = V_1 \setminus \{v_1, v_{2k+1}, v_{2k+3}\}$ where $V_1$ is the half
      of the bipartition containing $v_1$. We then have $|A| \ge
      \deg(v_1)-1$. We also have
      $|B| = |N(v_{2k+4}) \cap V'| \ge \deg(v_{2k+4}) -3$ because $v_1$ and
      $v_{2k+4}$ are not adjacent. Since $|V'| = n-3$ and $|A| + |B| \ge n-2$,
      by the pigeonhole principle, $A$ and $B$ intersect.}. Likewise, $v_{2k+4}$ is
    not a neighbour of $v_1$ so $B = N(v_{2k+4}) \cap V'$ has size at least
    $\deg(v_{2k+4})-4$.  We have $|V'| = n-6$ and $|A| + |B| \ge n-5$ because
    $v_1$ and $v_{2k+4}$ are not adjacent. So by the pigeonhole principle, there
    exists $x'y' \in N_{j-1}$, with
    $x', y' \notin \{v_1, v_{2k}, \dots v_{2k+4}\}$, such that $x'$ is a
    neighbour of $v_1$ and $y'$ of $v_{2k+4}$. By performing the $4$-switch
    $C= v_1v_{2k}\dots v_{2k+4}y'x'$ in $N_{j-1}$, one obtains the misleading
    matching $N_j$ with the deceptive edge $x'y'$ and progress index $k+2$ (see
    Figure~\ref{fig:FU}). Moreover, note that since $l$ did not exist, neither
    $v_{2k+4}$ nor $v_{2k+6}$ is adjacent to $v_1$.
    \begin{figure}[ht!]  
      \centering
      \begin{subfigure}[t]{.45\linewidth}
        \centering
        \begin{tikzpicture}[scale = .9]
          \clip (-1.8,-1.8) rectangle (4,2);
          \foreach \i in {1, ..., 12}{
            \node[circle, draw = black, inner sep = 1pt] (v\i) at ($(120-\i*30:1.5cm)$) {};
          }
          \foreach \i in {1, ...,6}{
            \node[circle, draw = black, inner sep = 1pt] (u\i) at ($(3,1) +(120-\i*60:.8cm)$) {};
          }

          \foreach \i in {1, ...,4}{
            \node[circle, draw = black, inner sep = 1pt] (w\i) at ($(3,-1) +(45-\i*90:.6cm)$) {};
          }

          \node[circle, draw = black, inner sep = 1pt] (a) at (2,-1) {};
          \node[circle, draw = black, inner sep = 1pt] (b) at (1.5,-1.5) {};

          \node[circle, draw = black, inner sep = 1pt] (x') at (-.5,1) {};
          \node[circle, draw = black, inner sep = 1pt] (y') at (-1,.5) {};

          \foreach \i in {1,..., 6}{
            \pgfmathsetmacro{\I}{int(Mod(2*\i-1,12))}
            \pgfmathsetmacro{\II}{int(Mod(2*\i-1,12)+1)}
            \pgfmathsetmacro{\III}{int(Mod(2*\i,12)+1)}
            \draw[very thick, color3] (v\I) -- (v\II);
            \draw[very thick, color1] (v\II) -- (v\III);
            }

            \foreach \i in {1,2,3}{
              \pgfmathsetmacro{\I}{int(Mod(2*\i-1,6))}
              \pgfmathsetmacro{\II}{int(Mod(2*\i-1,6)+1)}
              \pgfmathsetmacro{\III}{int(Mod(2*\i,6)+1)}
              \draw[very thick, color3] (u\I) -- (u\II);
              \draw[very thick, color1] (u\II) -- (u\III);
            }

            \foreach \i in {1,2}{
              \pgfmathsetmacro{\I}{int(Mod(2*\i-1,4))}
              \pgfmathsetmacro{\II}{int(Mod(2*\i-1,4)+1)}
              \pgfmathsetmacro{\III}{int(Mod(2*\i,4)+1)}
              \draw[very thick, color3] (w\I) -- (w\II);
              \draw[very thick, color1] (w\II) -- (w\III);
            }
            \draw[very thick,side by side=color3:color1] (a) -- (b);

            \node (C1) at (3,1) {$C_1$};
            \node (C1) at (0,0) {$C_i$};
            \node (C1) at (3,-1) {$C_p$};

            \draw[very thick, color3] (x') -- (y');
            \draw[very thick, color1] (x') -- (-.2,1);
            \draw[very thick, color1] (y') -- (-1,.2);

          \end{tikzpicture}
          \caption{The perfect matchings $S$, and $T$ are represented in
            blue and red respectively.}
      \end{subfigure}
      \hfill
      \begin{subfigure}[t]{.5\linewidth}
        \centering
        \begin{tikzpicture}[scale = .9]
          \clip (-2.9,-1.8) rectangle (4,2);
          \foreach \i in {1, ..., 12}{
            \coordinate (v\i) at ($(120-\i*30:1.5cm)$);
          }
          \foreach \i in {1, ...,6}{
            \coordinate (u\i) at ($(3,1) +(120-\i*60:.8cm)$);
          }

          \foreach \i in {1, ...,4}{
            \coordinate (w\i) at ($(3,-1) +(45-\i*90:.6cm)$);
          }

          \coordinate (a) at (2,-1);
          \coordinate (b) at (1.5,-1.5);

          \coordinate (x') at (-.5,1);
          \coordinate (y') at (-1,.5);

          \node (C1) at (3,1) {$C_1$};
          \node (C1) at (1,0) {$C_i$};
          \node (C1) at (3,-1) {$C_p$};

          \draw[line width=.12cm, opacity = .5, \colora] (v1) -- (v6) -- (v7) -- (v8)
          -- (v9) -- (v10) -- (v1);
          \foreach \i in {4,..., 6}{
            \pgfmathsetmacro{\I}{int(Mod(2*\i-1,12))}
            \pgfmathsetmacro{\II}{int(Mod(2*\i-1,12)+1)}
            \pgfmathsetmacro{\III}{int(Mod(2*\i,12)+1)}
            \draw[very thick, \colord] (v\I) -- (v\II);
            \draw[very thick, \colorb] (v\II) -- (v\III);
          }

          \draw (v2) -- (v1) -- (v10) (v5) -- (v6) ;
          \draw[very thick, \colord] (v1) -- (v6) (v2) -- (v3) (v4) -- (v5) ;
          \draw[very thick, \colorb] (v4) -- (v3) (v6) -- (v7) ;

          \draw[very thick, \colord] (x') -- (y');
          \draw[very thick, \colorb] (x') -- (-.2,1);
          \draw[very thick, \colorb] (y') -- (-1,.2);

          \foreach \i in {1,2,3}{
            \pgfmathsetmacro{\I}{int(Mod(2*\i-1,6))}
            \pgfmathsetmacro{\II}{int(Mod(2*\i-1,6)+1)}
            \pgfmathsetmacro{\III}{int(Mod(2*\i,6)+1)}
            \draw[very thick, \colorb] (u\I) -- (u\II);
            \draw[very thick, \colord] (u\II) -- (u\III);
          }

          \foreach \i in {1,2}{
            \pgfmathsetmacro{\I}{int(Mod(2*\i-1,4))}
            \pgfmathsetmacro{\II}{int(Mod(2*\i-1,4)+1)}
            \pgfmathsetmacro{\III}{int(Mod(2*\i,4)+1)}
            \draw[very thick, \colord] (w\I) -- (w\II);
            \draw[very thick, \colorb] (w\II) -- (w\III);
          }
          \draw[very thick,side by side=\colorb:\colord] (a) -- (b);

          \foreach \i in {1,...,12}{
            \node[circle, draw = black, fill= white, inner sep = 1pt] at (v\i) {};
          }
          \foreach \i in {1,...,6}{
            \node [circle, draw = black, fill= white, inner sep = 1pt] at (u\i) {};
          }
          \foreach \i in {1,...,4}{
            \node [circle, draw = black, fill= white, inner sep = 1pt] at (w\i) {};
          }
          \foreach \i in {a,b,x',y'}{
            \node [circle, draw = black, fill= white, inner sep = 1pt] at (\i) {};
          }

          \node[above=1pt of v1] {\small $v_1$};
          \node[below=1pt of v6] {\small $v_{2k}$};
          \node[left=1pt of v10] {\small $v_{2(k+2)}$};
          \node[below right=0pt of x'] {\small $x'$};
          \node[below right=0pt of y'] {\small $y'$};

        \end{tikzpicture}
        \caption{The matchings $N_{j-1}$ and $L$ (defined in Claim~\ref{cl:analysis}), represented in orange and
          light blue respectively. $N_j$ is obtained by performing a $4$-switch on
          $C$, which is here highlighted in yellow.}  
      \end{subfigure}
      \caption{Constructing $N_{j}$ when $N_{j-1}$ is representative and $v_1$ has no
        neighbours among $v_{2(k+1)}$, $v_{2(k+2)}$ and $v_{2(k+3)}$. The resulting $N_{j}$ is misleading.}\label{fig:FU}
    \end{figure}
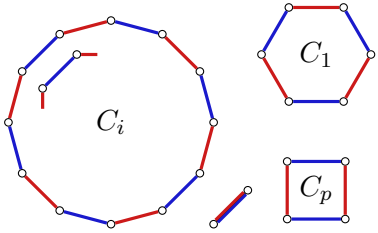
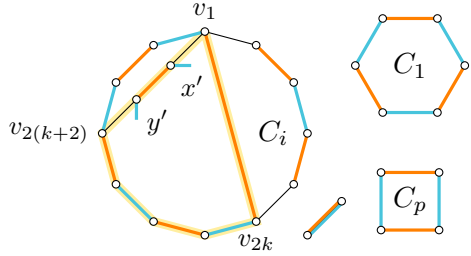

    \paragraph{Setup for the remaining cases}
    Now, assume that $N_{j-1}$ is misleading. Let $xy$ be the deceptive edge of
    $N_{j-1}$ and $k$ the progress index of $N_{j-1}$. We distinguish cases
    depending on the value of $(x,y)$: the most generic case is when
    $\{x,y\} \cap \{v_{2k+1}, \dots v_{2k+4}\} = \emptyset$, otherwise
    $\{x,y\} = \{v_{2k+1},v_{2k+2}\}$ or $\{x,y\} = \{v_{2k+3},v_{2k+4}\}$
    because $xy$ is an edge of $N_{j-2}$. Note that we cannot have
    $(x,y) = (v_{2k+2}, v_{2k+1})$ because $N_{j-1}$ is deceptive, so $v_{2k+2}$
    and $v_1$ are not adjacent. We handle the case
    $(x,y) = (v_{2k+4}, v_{2k+3})$ separately, and the other cases all at once
    by defining an the alternating path $P$ of length 5, going from $v_1$ to
    $v_{2k+l}$ for some $l \in \{1,2\}$, that we will use later to define the
    appropriate switch:
   \begin{description}
   \item[Case 3] If $(x,y) = (v_{2k+4}, v_{2k+3})$, then since
     $yv_{2k} = v_{2k+3}v_{2k}$ and $v_{2k+1}v_{2k+2}$ are edges of $N_{j-1}$,
     by performing the $2$-switch $v_{2k+3}v_{2k}v_{2k+1}v_{2k+2}$, one obtains a
     matching the representative matching $N_j$ with progress index equal to
     $k+2$.
   \item[Case 4] If $(x,y) = (v_{2k+3}, v_{2k+4})$, let
     $P = v_1v_{2k+3}v_{2k+2}v_{2k+1}v_{2k}v_{2k+4}$ and $l=2$. Note that this case
     cannot occur if $G$ is bipartite because $v_1$ and $v_{2k+3}$ are then in the
     same half of the bipartition.
   \item[Case 5] If $(x,y) = (v_{2k+1}, v_{2k+2})$, let
     $P = v_1v_{2k+1}v_{2k}v_{2k+2}v_{2k+3}v_{2k+4}$ and $l=2$. Note again that this
     case cannot occur if $G$ is bipartite because $v_1$ and $v_{2k+1}$ are then in the
     same half of the bipartition.
    \item[Case 6] Otherwise, we are in the general case that $\{x,y\}
    \cap \{v_{2k+1}, \dots v_{2k+4}\}$. Therefore we let $P = v_1xyv_{2k}\dots v_{2k+2}$ and
    $l=1$.
    \end{description}

    As Case 3 is already settled, we assume from now on that
    $(x,y) \neq (v_{2k+4}, v_{2k+3})$. We handle Cases 4, 5 and 6
    simultaneously. In each of these cases, one can check that $P$ is an
    alternating path of $N_{j-1}$. We distinguish two subcases, depending on
    whether there exists some $m \in \{0,1\}$ such that $v_{2(k+l+m)}$ is a
    neighbour of $v_1$.

    \paragraph{Cases 4a, 5a and 6a} Assume that there exists some
    $m \in \{0,1\}$ such that $v_{2(k+l+m)}$ is a neighbour of $v_1$, and let
    $m$ be the largest such integer. Let $C$ be the alternating cycle obtained
    by concatenating $P$ with $v_{2(k+l)} \dots v_{2(k+l+m)}v_1$.  The cycle $C$
    has length six or eight because $P$ hes length five. Let $N_{j}$ be the
    perfect matching obtained by performing the $4$-switch on $C$ (see for example
    Figure~\ref{fig:UF} for Case 6a, with $P = v_1xyv_{2k}\dots v_{2k+4}$ and $l = 2$
    and $m=0$). The vertices affected by this $4$-switch all belong to
    $C_i \cup \{x,y\}$. If we had $(x,y) = (v_{2k+3}, v_{2k+4})$ or
    $(x,y) = (v_{2k+1}, v_{2k+2})$ (that is Cases 4a and 5a respectively), $N_j$
    is representative with progress index $k+l+m$: it is equal to $M_i$ on all
    vertices but $\{v_1, \dots v_{2(k+l+m)}\}$, where $N_j$ contains
    $v_1v_{2(k+l+m)}$ and is equal to $M_{i+1}$ on all other vertices. In Case
    6a, recall that $P = v_1xyv_{2k}\dots v_{2k+2}$. Since $N_{j-1}$ was
    misleading, $xy$ belonged to $N_{j-2}$ and belongs to $N_{j}$ as well
    because $x$ and $y$ are consecutive in $P$. Thus, $N_j$ is representative
    with progress index $k+l+m$: it is equal to $M_i$ on all vertices but
    $\{v_1, \dots v_{2(k+l+m)}\}$, where $N_j$ contains $v_1v_{2(k+l+m)}$ and is
    equal to $M_{i+1}$ on all other vertices.
       \begin{figure}[ht!]
      
      \centering
      \begin{subfigure}[t]{.45\linewidth}
        \centering
        \begin{tikzpicture}[scale = .9]
          \clip (-1.8,-1.8) rectangle (4,2);
          \foreach \i in {1, ..., 12}{
            \node[circle, draw = black, inner sep = 1pt] (v\i) at ($(120-\i*30:1.5cm)$) {};
          }
          \foreach \i in {1, ...,6}{
            \node[circle, draw = black, inner sep = 1pt] (u\i) at ($(3,1) +(120-\i*60:.8cm)$) {};
          }

          \foreach \i in {1, ...,4}{
            \node[circle, draw = black, inner sep = 1pt] (w\i) at ($(3,-1) +(45-\i*90:.6cm)$) {};
          }

          \node[circle, draw = black, inner sep = 1pt] (a) at (2,-1) {};
          \node[circle, draw = black, inner sep = 1pt] (b) at (1.5,-1.5) {};

          \node[circle, draw = black, inner sep = 1pt] (x) at (.31,.31) {};
          \node[circle, draw = black, inner sep = 1pt] (y) at (.48,-.3) {};

          \foreach \i in {1,..., 6}{
            \pgfmathsetmacro{\I}{int(Mod(2*\i-1,12))}
            \pgfmathsetmacro{\II}{int(Mod(2*\i-1,12)+1)}
            \pgfmathsetmacro{\III}{int(Mod(2*\i,12)+1)}
            \draw[very thick, color3] (v\I) -- (v\II);
            \draw[very thick, color1] (v\II) -- (v\III);
            }

            \foreach \i in {1,2,3}{
              \pgfmathsetmacro{\I}{int(Mod(2*\i-1,6))}
              \pgfmathsetmacro{\II}{int(Mod(2*\i-1,6)+1)}
              \pgfmathsetmacro{\III}{int(Mod(2*\i,6)+1)}
              \draw[very thick, color3] (u\I) -- (u\II);
              \draw[very thick, color1] (u\II) -- (u\III);
            }

            \foreach \i in {1,2}{
              \pgfmathsetmacro{\I}{int(Mod(2*\i-1,4))}
              \pgfmathsetmacro{\II}{int(Mod(2*\i-1,4)+1)}
              \pgfmathsetmacro{\III}{int(Mod(2*\i,4)+1)}
              \draw[very thick, color3] (w\I) -- (w\II);
              \draw[very thick, color1] (w\II) -- (w\III);
            }
            \draw[very thick,side by side=color3:color1] (a) -- (b);

            \node (C1) at (3,1) {$C_1$};
            \node (C1) at (0,0) {$C_i$};
            \node (C1) at (3,-1) {$C_p$};

            \draw[very thick, color1] (x) -- (y);
            \draw[very thick, color3] (y) --++ (.2,-.1);
            \draw[very thick, color3] (x) --++ (.05,.23);

          \end{tikzpicture}
          \caption{The perfect matchings $S$, and $T$ are represented in
            blue and red respectively.}
      \end{subfigure}
      \hfill
      \begin{subfigure}[t]{.5\linewidth}
        \centering
        \begin{tikzpicture}[scale = .9]
          \clip (-2.9,-1.8) rectangle (4,2);
          \foreach \i in {1, ..., 12}{
            \coordinate (v\i) at ($(120-\i*30:1.5cm)$);
          }
          \foreach \i in {1, ...,6}{
            \coordinate (u\i) at ($(3,1) +(120-\i*60:.8cm)$);
          }

          \foreach \i in {1, ...,4}{
            \coordinate (w\i) at ($(3,-1) +(45-\i*90:.6cm)$);
          }

          \coordinate (a) at (2,-1);
          \coordinate (b) at (1.5,-1.5);

          \coordinate (x) at (.31,.31);
          \coordinate (y) at (.48,-.3);

          \node (C1) at (3,1) {$C_1$};
          \node (C1) at (1,0) {$C_i$};
          \node (C1) at (3,-1) {$C_p$};

          \draw[line width=.12cm, opacity = .5, \colora] (v1) -- (v6) -- (v7) --
          (v8) -- (v9) -- (v10) -- (v1);
          \foreach \i in {4,..., 6}{
            \pgfmathsetmacro{\I}{int(Mod(2*\i-1,12))}
            \pgfmathsetmacro{\II}{int(Mod(2*\i-1,12)+1)}
            \pgfmathsetmacro{\III}{int(Mod(2*\i,12)+1)}
            \draw[very thick, \colord] (v\I) -- (v\II);
            \draw[very thick, \colorb] (v\II) -- (v\III);
          }

          \draw (v2) -- (v1) -- (v10) (v5) -- (v6) ;
          \draw[very thick, \colord] (v1) -- (x) (y)--(v6) (v2) -- (v3) (v4) -- (v5) ;
          \draw[very thick, \colorb] (v4) -- (v3) (v6) -- (v7);

          \draw (x) -- (y);
          \draw[very thick, \colorb] (y) --++ (.2,-.1);
          \draw[very thick, \colorb] (x) --++ (.05,.23);

          \foreach \i in {1,2,3}{
            \pgfmathsetmacro{\I}{int(Mod(2*\i-1,6))}
            \pgfmathsetmacro{\II}{int(Mod(2*\i-1,6)+1)}
            \pgfmathsetmacro{\III}{int(Mod(2*\i,6)+1)}
            \draw[very thick, \colorb] (u\I) -- (u\II);
            \draw[very thick, \colord] (u\II) -- (u\III);
          }

          \foreach \i in {1,2}{
            \pgfmathsetmacro{\I}{int(Mod(2*\i-1,4))}
            \pgfmathsetmacro{\II}{int(Mod(2*\i-1,4)+1)}
            \pgfmathsetmacro{\III}{int(Mod(2*\i,4)+1)}
            \draw[very thick, \colord] (w\I) -- (w\II);
            \draw[very thick, \colorb] (w\II) -- (w\III);
          }
          \draw[very thick,side by side=\colorb:\colord] (a) -- (b);

          \foreach \i in {1,...,12}{
            \node[circle, draw = black, fill= white, inner sep = 1pt] at (v\i) {};
          }
          \foreach \i in {1,...,6}{
            \node [circle, draw = black, fill= white, inner sep = 1pt] at (u\i) {};
          }
          \foreach \i in {1,...,4}{
            \node [circle, draw = black, fill= white, inner sep = 1pt] at (w\i) {};
          }
          \foreach \i in {a,b,x,y}{
            \node [circle, draw = black, fill= white, inner sep = 1pt] at (\i) {};
          }

          \node[above=1pt of v1] {\small $v_1$};
          \node[below=1pt of v6] {\small $v_{2k}$};
          \node[left=1pt of v10] {\small $v_{2(k+l)}$};
          \node[left=0pt of x] {\small $x$};
          \node[left=0pt of y] {\small $y$};

        \end{tikzpicture}
        \caption{The matchings $N_{j-1}$ and $L$ (defined in Claim~\ref{cl:analysis}), represented in orange and
          light blue respectively. $N_j$ is obtained by performing a $4$-switch on
          $C$, which is here highlighted in yellow.}  
      \end{subfigure}
      \caption{Constructing $N_{j}$ when $N_{j-1}$ is misleading and $v_1$ is
        adjacent to $v_{2(k+l)}$ for some $l \in [2]$ (here $l=2$). The resulting $N_{j}$ is representative.}\label{fig:UF}
    \end{figure}
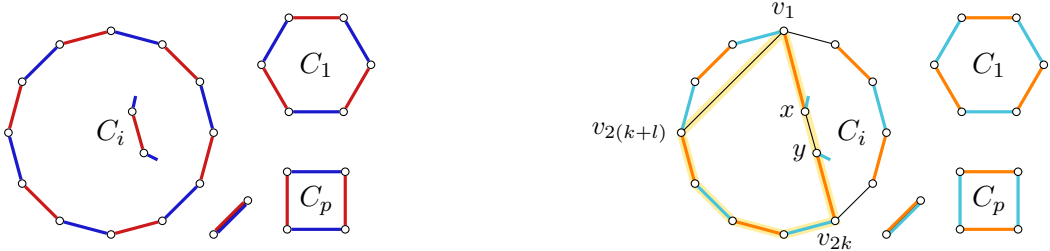

    \paragraph{Cases 4b, 5b, and 6b} Finally, assume that no such $m$
    exists. Let $V' = V(G) \setminus V(P)$ and $A$ be the set of vertices of
    $V'$ matched in $N_{j-1}$ to a neighbour of $v_1$\footnote{If $G$ is a
      balanced bipartite graph, recall that $P= v_1xyv_{2k}\dots v_{2k+2}$ and
      $l = 1$. Let $V' = V_1 \setminus \{v_1, y, v_{2k+1}\}$. We have
      $|A| \ge \deg(v_1) -1$ because $v_1$ is not adjacent to $v_{2k}$ or
      $v_{2k+2}$. Let $B = N(v_{2k+2} \cap V')$, we have
      $|B| \ge \deg(v_{2k+2}) - 2$ because $v_1$ is not adjacent to
      $v_{2k+2}$. By assumption, $|A| + |B| \ge n-2$ so by the pigeonhole principle,
      since $|V'| = n-3$, $A$ and $B$ intersect.}. Since $v_1$ is not adjacent
    to $v_{2k}$ and $v_{2(k+l)}$, it has at least $\deg(v_1) -3$ neighbours
    among $V'$, so $|A| \ge \deg(v_1)-3$. Likewise, $v_{2(k+l)}$ has at least
    $\deg(v_{2(k+l)})-4$ neighbours in $V'$ because $v_{2(k+l)}$ and $v_1$ are
    not adjacent. So $|A| + |B| \ge n-5 > |V'|$ and by the pigeonhole principle $A$
    and $B$ intersect. In other words, there exists $x'y' \in N_{j-1}$, with
    $x', y' \notin V(P)$, such that $x'$ is a neighbour of $v_1$ and $y'$ of
    $v_{2(k+l)}$. Let $C$ be the cycle of length eight obtained by concatenating
    $P$ with $v_{2(k+l)}y'x' v_1$. By performing the $4$-switch on the $C$ in
    $N_{j-1}$, one obtains the misleading matching $N_j$ with the deceptive edge
    $x'y'$ and progress index $k+l$ (see for example Figure~\ref{fig:UU} for Case 6b
    where $l=1$ and $P=v_1xyv_{2k}\dots v_{2k+2}$). Indeed, if we had
    $(x,y) = (v_{2k+1}, v_{2k+2})$ or $(x,y) = (x,y) = (v_{2k+3}, v_{2k+4})$
    (that is Cases 4b and 5b respectively), then recall that
    $P = v_1v_{2k+1}v_{2k}v_{2k+2}v_{2k+3}v_{2k+4}$ and
    $v_1v_{2k+3}v_{2k+2}v_{2k+1}v_{2k}v_{2k+4}$ respectively, so $N_j$ is equal
    to $M_i$ on all vertices but $\{v_1, \dots v_{2(k+2)}, x', y'\}$, where
    $N_j$ contains $v_1v_{2k+2}$ and $x'y'$, and is equal to $M_{i+1}$ on all
    other vertices. Since $m$ did not exists, neither $v_{2k+4}$ nor $v_{2k+6}$
    are adjacent to $v_1$. In the remaining case, recall that
    $P = v_1xyv_{2k}\dots v_{2k+2}$. Since $N_{j-1}$ was misleading with
    deceptive edge $xy$, the edge $xy$ belonged to $N_{j-2}$ and also belongs to
    $N_j$ because $x$ and $y$ are consecutive in $P$. Thus $N_j$ is equal to
    $M_i$ on all vertices but $\{v_1, \dots v_{2(k+2)}, x', y'\}$, where $N_j$
    contains $v_1v_{2k+2}$ and $x'y'$, and is equal to $M_{i+1}$ on all other
    vertices. Finally, note that by assumption, since $m$ did not exists,
    neither $v_{2k+2}$ nor $v_{2k+4}$ are adjacent to $v_1$.
    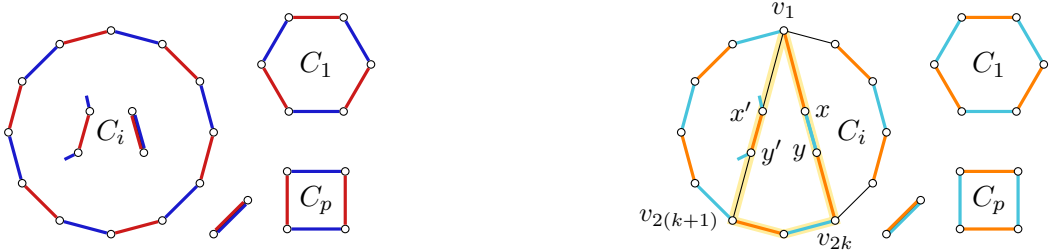
\begin{figure}[ht!]     
      \centering
      \begin{subfigure}[t]{.45\linewidth}
        \centering
        \begin{tikzpicture}[scale = .9]
          \clip (-1.8,-1.8) rectangle (4,2);
          \foreach \i in {1, ..., 12}{
            \node[circle, draw = black, inner sep = 1pt] (v\i) at ($(120-\i*30:1.5cm)$) {};
          }
          \foreach \i in {1, ...,6}{
            \node[circle, draw = black, inner sep = 1pt] (u\i) at ($(3,1) +(120-\i*60:.8cm)$) {};
          }

          \foreach \i in {1, ...,4}{
            \node[circle, draw = black, inner sep = 1pt] (w\i) at ($(3,-1) +(45-\i*90:.6cm)$) {};
          }

          \node[circle, draw = black, inner sep = 1pt] (a) at (2,-1) {};
          \node[circle, draw = black, inner sep = 1pt] (b) at (1.5,-1.5) {};

          \node[circle, draw = black, inner sep = 1pt] (x') at (-.31,.31) {};
          \node[circle, draw = black, inner sep = 1pt] (y') at (-.48,-.3) {};
          \node[circle, draw = black, inner sep = 1pt] (x) at (.31,.31) {};
          \node[circle, draw = black, inner sep = 1pt] (y) at (.48,-.3) {};

          \foreach \i in {1,..., 6}{
            \pgfmathsetmacro{\I}{int(Mod(2*\i-1,12))}
            \pgfmathsetmacro{\II}{int(Mod(2*\i-1,12)+1)}
            \pgfmathsetmacro{\III}{int(Mod(2*\i,12)+1)}
            \draw[very thick, color3] (v\I) -- (v\II);
            \draw[very thick, color1] (v\II) -- (v\III);
            }

            \foreach \i in {1,2,3}{
              \pgfmathsetmacro{\I}{int(Mod(2*\i-1,6))}
              \pgfmathsetmacro{\II}{int(Mod(2*\i-1,6)+1)}
              \pgfmathsetmacro{\III}{int(Mod(2*\i,6)+1)}
              \draw[very thick, color3] (u\I) -- (u\II);
              \draw[very thick, color1] (u\II) -- (u\III);
            }

            \foreach \i in {1,2}{
              \pgfmathsetmacro{\I}{int(Mod(2*\i-1,4))}
              \pgfmathsetmacro{\II}{int(Mod(2*\i-1,4)+1)}
              \pgfmathsetmacro{\III}{int(Mod(2*\i,4)+1)}
              \draw[very thick, color3] (w\I) -- (w\II);
              \draw[very thick, color1] (w\II) -- (w\III);
            }
            \draw[very thick,side by side=color3:color1] (a) -- (b);

            \node (C1) at (3,1) {$C_1$};
            \node (C1) at (0,0) {$C_i$};
            \node (C1) at (3,-1) {$C_p$};

            \draw[very thick, color1] (x') -- (y');
            \draw[very thick, color3] (y') --++ (-.2,-.1);
            \draw[very thick, color3] (x') --++ (-.05,.23);
            \draw[very thick, side by side=color3:color1] (x) -- (y);

          \end{tikzpicture}
          \caption{The perfect matchings $S$, and $T$ are represented in
            blue and red respectively.}
      \end{subfigure}
      \hfill
      \begin{subfigure}[t]{.5\linewidth}
        \centering
        \begin{tikzpicture}[scale = .9]
          \clip (-2.9,-1.8) rectangle (4,2);
          \foreach \i in {1, ..., 12}{
            \coordinate (v\i) at ($(120-\i*30:1.5cm)$);
          }
          \foreach \i in {1, ...,6}{
            \coordinate (u\i) at ($(3,1) +(120-\i*60:.8cm)$);
          }

          \foreach \i in {1, ...,4}{
            \coordinate (w\i) at ($(3,-1) +(45-\i*90:.6cm)$);
          }

          \coordinate (a) at (2,-1);
          \coordinate (b) at (1.5,-1.5);

          \coordinate (x') at (-.31,.31);
          \coordinate (y') at (-.48,-.3);
          \coordinate (x) at (.31,.31);
          \coordinate (y) at (.48,-.3);

          \node (C1) at (3,1) {$C_1$};
          \node (C1) at (1,0) {$C_i$};
          \node (C1) at (3,-1) {$C_p$};

          \draw[line width=.12cm, opacity = .5, \colora] (v1) -- (v6) -- (v7) -- (v8) -- (v1);
          \foreach \i in {4,..., 6}{
            \pgfmathsetmacro{\I}{int(Mod(2*\i-1,12))}
            \pgfmathsetmacro{\II}{int(Mod(2*\i-1,12)+1)}
            \pgfmathsetmacro{\III}{int(Mod(2*\i,12)+1)}
            \draw[very thick, \colord] (v\I) -- (v\II);
            \draw[very thick, \colorb] (v\II) -- (v\III);
          }

          \draw (v2) -- (v1) -- (v8) (v5) -- (v6) ;
          \draw[very thick, \colord] (v1) -- (x) (y)--(v6) (v2) -- (v3) (v4) -- (v5) ;
          \draw[very thick, \colorb] (v4) -- (v3) (v6) -- (v7) (x) -- (y) ;

          \draw[very thick, \colord] (x') -- (y');
          \draw[very thick, \colorb] (y') --++ (-.2,-.1);
          \draw[very thick, \colorb] (x') --++ (-.05,.23);
          \draw[very thick, \colorb] (x) -- (y);         

          \foreach \i in {1,2,3}{
            \pgfmathsetmacro{\I}{int(Mod(2*\i-1,6))}
            \pgfmathsetmacro{\II}{int(Mod(2*\i-1,6)+1)}
            \pgfmathsetmacro{\III}{int(Mod(2*\i,6)+1)}
            \draw[very thick, \colorb] (u\I) -- (u\II);
            \draw[very thick, \colord] (u\II) -- (u\III);
          }

          \foreach \i in {1,2}{
            \pgfmathsetmacro{\I}{int(Mod(2*\i-1,4))}
            \pgfmathsetmacro{\II}{int(Mod(2*\i-1,4)+1)}
            \pgfmathsetmacro{\III}{int(Mod(2*\i,4)+1)}
            \draw[very thick, \colord] (w\I) -- (w\II);
            \draw[very thick, \colorb] (w\II) -- (w\III);
          }
          \draw[very thick,side by side=\colorb:\colord] (a) -- (b);

          \foreach \i in {1,...,12}{
            \node[circle, draw = black, fill= white, inner sep = 1pt] at (v\i) {};
          }
          \foreach \i in {1,...,6}{
            \node [circle, draw = black, fill= white, inner sep = 1pt] at (u\i) {};
          }
          \foreach \i in {1,...,4}{
            \node [circle, draw = black, fill= white, inner sep = 1pt] at (w\i) {};
          }
          \foreach \i in {a,b,x,y,x',y'}{
            \node [circle, draw = black, fill= white, inner sep = 1pt] at (\i) {};
          }

          \node[above=1pt of v1] {\small $v_1$};
          \node[below=1pt of v6] {\small $v_{2k}$};
          \node[left=1pt of v8] {\small $v_{2(k+1)}$};
          \node[right=0pt of x] {\small $x$};
          \node[left=0pt of y] {\small $y$};
          \node[left=0pt of x'] {\small $x'$};
          \node[right=0pt of y'] {\small $y'$};

        \end{tikzpicture}
        \caption{The matchings $N_{j-1}$ and $L$ (defined in Claim~\ref{cl:analysis}), represented in orange and
          light blue respectively. $N_j$ is obtained by performing a $4$-switch on
          $C$, which is here highlighted in yellow.}  
      \end{subfigure}
      \caption{Constructing $N_{j}$ when $N_{j-1}$ is misleading and $v_1$ has no
        neighbours among $v_{2(k+1)}$ and $v_{2(k+2)}$. The resulting $N_{j}$ is misleading.}\label{fig:UU}
    \end{figure}
  \end{poc}

  Note that the progress index increases at each step of our reconfiguration
  sequence, so the sequence between $M_i$ and $M_{i+1}$ has length at most
  $|C_i| = |M_i \Delta M_{i+1}|$. So the canonical path between $S$ and $T$ has
  length at most $n$.

  \subsubsection*{Analysis of the congestion} 
  Given some transition $M \rightarrow M'$, let $\cp(M,M')$ denote the set of
  couple $(S,T)$ such that the canonical path between $S$ and $T$ includes the
  transition $M \rightarrow M'$. Recall that the congestion of our set of
  canonical paths is defined by
  \begin{align*}
    \varrho(\Gamma) &= \max_{M \rightarrow M'} \left\{\frac{\sum\limits_{(S,T)\in
                      \cp(M,M')} \pi(S)\pi(T)|\gamma_{S,T}|}{\pi(M) \Prob(M
                      \rightarrow M')}\right\} &&\parbox{4cm}{ where  $\pi$ is the
                                                  uniform distribution on
                                                  perfect matchings}\\
    &\le n \max_{M \rightarrow M'} \frac{\pi(M)|cp(M,M')|}{\Prob(M\rightarrow
      M')}. && \parbox{4cm}{ Because $|\gamma_{S,T}| \le n$}
  \end{align*}
  Thus we need to bound the number of canonical paths using a transition
  $M \rightarrow M'$. Given some $c \in \{1,2,3,4a,4b,5a,5b,6a,6b\}$, denote by
  $\cp_c(M,M')$ the couples $(S,T) \in \cp(M,M')$ such that in the canonical
  path from $S$ to $T$, $M'$ is constructed using Case $c$ of
  Claim~\ref{cl:connecting_Mi}.
  \begin{claim}\label{cl:analysis}
    For any fixed $c$ and transition $M \rightarrow M'$, we have
    $$\frac{|\cp_c(M,M')|}{\Prob(M,M')} = O(\pi(M)^{-1}n^6).$$
  \end{claim}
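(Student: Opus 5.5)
The plan is the standard injective encoding argument of Jerrum and Sinclair. For a fixed case $c$ and transition $M\rightarrow M'$, I will build a map $\eta\colon \cp_c(M,M') \to \mathcal{N}\times[n]^{O(1)}$, where $\mathcal{N}$ is the set of perfect and near-perfect matchings of $G$ (or matchings missing a bounded number of vertices). The map will be injective up to an $O(1)$ or polynomial multiplicity. Lemma~\ref{lem:near-perfect} then bounds $|\mathcal{N}|$ by $O(n^2)\,\pi(M)^{-1}$. For transitions that are actual steps of a canonical path, the probability satisfies $\Prob(M\rightarrow M') = \Theta(n^{-\ell/2})$ with $\ell\le 8$, so it is at least $\Omega(n^{-4})$. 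Dividing gives the claimed $O(\pi(M)^{-1}n^6)$, provided the encoding costs at most $O(n^2)$ near-perfect matchings times $O(1)$ extra labels. Equivalently, the encoding needs at most $O(n^{2})$ guessed vertices in total when combined with the $n^4$ coming from the transition probability.

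The encoding is the usual complementary matching $L = S\Delta T\Delta M$, which is the light blue matching in the figures. For a representative $M = N_{j-1}$ with progress index $k$, the set $L$ agrees with $T$ on cycles $C_{i'}$ with $i'<i$, and with $S$ on cycles with $i'>i$. On $C_i$ it agrees with the complementary pieces, except near $v_1$ and $v_{2k}$. There $L$ is a perfect matching minus the edge $v_1v_{2k}$ plus something, so removing or adding at most one or two edges turns it into a matching missing at most two vertices. For misleading $M$ with deceptive edge $xy$, $L$ additionally contains $xy$ twice over in the symmetric difference. So I define $L$ as $S\Delta T\Delta M$ with the edges incident to $v_1,v_{2k},x,y$ corrected. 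This leaves a matching missing at most two vertices, and I append the identities of $v_1$, $v_{2k}$, $x$ and $y$ as auxiliary data. Since $M'$ is known and the switch $M\rightarrow M'$ touches at most $8$ vertices, most of these vertices can be read off from $M\Delta M'$ with $O(1)$ choices (one vertex of the switch cycle is $v_1$, another is $v_{2k}$, and the deceptive edge is a specific pair on it). This keeps the extra labels at $O(1)$ or at most one free vertex.

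The recovery step checks that $(S,T)$ is determined by $(M,M',L,\text{labels})$. First, $M\Delta L$ restricted to the uncorrected part equals $S\Delta T$ away from the corrected edges; together with the labels this recovers $S\Delta T$ and hence the cycle decomposition $C_1,\dots,C_p$. Next, the ordering by smallest vertex identifies $C_i$ as the cycle containing $v_1$, and $v_1$ is its minimum. Finally, on each cycle we know which of $M,L$ carries $S$ and which carries $T$: for $i'<i$ the matching $M$ agrees with $T$, for $i'>i$ it agrees with $S$, and on $C_i$ the progress index $k$ tells us the split. This gives injectivity. Each case $c\in\{1,\dots,6b\}$ must be checked separately, since the exact corrected edges differ: in Cases 4 and 5 the deceptive edge lies on $C_i$ itself. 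For each case I verify that $L$ after correction is a genuine matching of $G$ with at most two exposed vertices. This uses the fact that the corrected edges such as $v_1v_{2k}$ are edges of $G$ by construction.

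The main obstacle is this last verification. Specifically, I must ensure that the corrected $L$ uses only edges of $G$ and misses at most two vertices in all misleading cases. The danger is the deceptive edge $xy$, which lies in $S\cap T$ but not in $M$. The fix I would try first is to put $xy$ back into $L$, which is allowed since $xy\in E(G)$. I would then treat $v_1,v_{2k}$ (or $v_{2(k+l)}$) as the two exposed vertices, so that Lemma~\ref{lem:near-perfect} applies directly. The remaining bookkeeping is to count that the choice of these at most two unmatched vertices plus at most one extra vertex label stays within the $n^6$ budget after multiplying by $\Prob(M,M')^{-1} = O(n^4)$.
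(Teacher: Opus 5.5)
Your strategy is the same as the paper's. You encode each $(S,T)\in\cp_c(M,M')$ by a perfect or near-perfect matching $L$ complementary to $M$ plus $O(1)$ extra data, use Lemma~\ref{lem:near-perfect} to get $O(n^2\pi(M)^{-1})$ encodings, and divide by $\Prob(M\to M')=\Omega(n^{-4})$. However, the two steps you leave open are resolved incorrectly.

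\textbf{Which vertices $L$ exposes.} $v_1$ and $v_{2k}$ cannot be the exposed vertices of $L$. Take a representative $M$ with progress index $1<k<q$. Then $S\Delta T\Delta M$ contains the three edges $v_1v_2$, $v_{2q}v_1$, $v_1v_{2k}$ at $v_1$, and the three edges $v_{2k-1}v_{2k}$, $v_{2k}v_{2k+1}$, $v_1v_{2k}$ at $v_{2k}$. Deleting edges so as to leave $v_1$ and $v_{2k}$ unmatched therefore also leaves $v_2$, $v_{2q}$, $v_{2k-1}$, $v_{2k+1}$ unmatched, and Lemma~\ref{lem:near-perfect} no longer applies. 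The correct correction deletes only $v_1v_{2k}$, $v_1v_2$ and $v_{2k-1}v_{2k}$. The exposed vertices are then $v_2$ and $v_{2k-1}$, which you read off from $L$, with one bit ($a$ in the paper) recording which one is $v_2$.

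\textbf{The deceptive edge.} It need not lie in $S\cap T$. In Cases~2 and 4b--6b it is any edge of $N_{j-1}$ avoiding a few vertices, so it may lie on another cycle $C_{i'}$ or on $C_i$. Suppose it is an $S$-edge of some $C_{i'}$ with $i'>i$. Then $S\Delta T\Delta M$ already contains $xy$ together with the $T$-edges at $x$ and $y$, and ``putting $xy$ back'' is the wrong correction. The paper avoids this by defining $L$ directly from $(S,T,i,k)$:
\begin{itemize}
\item it equals $S$ on the cycles $C_{i'}$ with $i'<i$ and on $v_3,\dots,v_{2k-2}$;
\item it equals $T$ on the cycles with $i'>i$ and on $v_{2k},\dots,v_{2q},v_1$;
\item it equals $S\cap T$ elsewhere.
\end{itemize}
With this definition the deceptive edge never enters the encoding. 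You also state the earlier and later cycles the wrong way round: since $M$ agrees with $T$ on $C_{i'}$ for $i'<i$, $L$ agrees with $S$ there.

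\textbf{The final count.} As stated, it does not close. You assume $v_1$ lies on the switch cycle, but in Case~3 the switch is on the $4$-cycle $v_{2k+3}v_{2k}v_{2k+1}v_{2k+2}$ and does not touch $v_1$. So $v_1$ must be supplied as a free vertex; the paper records $v=v_1$ and $w=v_{2k}$. With your uniform bound $\Prob(M\to M')^{-1}=O(n^4)$, even one free vertex gives $O(n^7\pi(M)^{-1})$, contrary to your last paragraph.

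The bound closes only because Case~3 is a switch on a $4$-cycle. The chain $\Gamma$ performs such a switch with probability $\Theta(n^{-3})$, since it needs only $u_1,u_2$ prescribed (up to $O(1)$ choices) and $u_3=u_1$. This gives $n^2\cdot n\cdot n^3=n^6$, provided $v_{2k}$ is encoded by its position among the four vertices of $C$ rather than as a free vertex. The paper instead uses $\Prob(M\to M')=\Theta(n^{-2})$ here, which is what lets it pay $n^2$ for $(v,w)$. For the chain as defined, that saving on $w$ is actually needed.
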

  \begin{poc}
    Let $c \in \{1,2,3,4a,4b,5a,5b,6a,6b\}$ and $M \rightarrow M'$ be a fixed
    transition and $C= M \Delta M'$ the cycle on which the $4$-switch is
    performed. We define an injective application on $\cp_c(M,M')$ to count
    it. The Case 3 is somewhat different from the others, because it is the only
    case where $v_1$ does not belong to the switch that is performed. For this
    reason, the definition of our injective application differs slightly for $c=3$.
    If $c=3$, let $f_3$ be the application that associates to any
    $(S,T) \in \cp_c(M,M')$ a tuple $(L,a,v,w)$ where $L$ is perfect or
    near-perfect matching with some additional information $a \in \{0,1\}$ and
    $v,w \in V(G)$ to be defined later. Otherwise, let $f_c$ be the application
    that associates to any $(S,T) \in \cp_c(M,M')$ a tuple $(L,a,b)$, where $L$
    is again a perfect or near-perfect matching, with some additional
    information $a \in \{0,1\}$ and $b \in [8]$ to be defined later.

    Let $(S, T) \in \cp(M,M')$. Recall that $C_1, \dots C_p$ are the alternating
    cycles in $S \Delta T$ ordered by smallest vertex and that for all $i$,
    $M_i$ is the perfect matching that is equal to $T$ on all the cycles $C_j$
    with $j < i$ and to $S$ on all the remaining vertices. Let $i$ such that
    $M \rightarrow M'$, was used when connecting $M_i$ to $M_{i+1}$ and denote
    again by $v_1, v_2, \dots v_{2q}$ the consecutive vertices of $C_i$, such that
    $v_1$ is the smallest vertex and $v_1v_2 \in M_i$. Let $k$ and $k'$ be the
    progress indices of $M$ and $M'$ respectively. Let $L$ be the matching
    defined by
    \begin{align*}
      L :=& (S \cap T) \ \cup \ \left(S \cap (\bigcup_{j <i} C_j)\right) \ \cup \  \left(T \cap (\bigcup_{j >i} C_j)\right) \\
          & \cup \ \{v_{2i+1}v_{2i+2} : 1 \le  i \le k-2\}  \ \cup \ 
            \{v_{2i}v_{2i+1}: k \le i \le q\}.
    \end{align*}
    In other words, $L$ is equal to $S$ on all vertices in
    $\bigcup_{j<i} V(C_j) \cup \{v_3, \dots v_{2k-2}\}$, to $T$ on all vertices
    in $\bigcup_{j>i} V(C_j) \cup \{v_{2k}, \dots v_{2q}, v_1\}$ and to $S$ (and
    $T$) on all vertices of $V(G) \setminus \bigcup_{j=1}^p V(C_j)$. Note that
    $L$ is a perfect matching if and only if $k = 1$ that is if $M =
    M_i$. Otherwise, $L$ is a near-perfect matching with unmatched vertices
    $v_2$ and $v_{2k-1}$.

    Let $a$ denote whether $v_2$ is greater than $v_{2k-1}$. If $c=3$, then $v_1$
    does not belong to the cycle $C = v_{2k}v_{2k+1}v_{2k+2}v_{2k+3}$ that is
    switched and let $v=v_1$ and $w = v_{2k}$. In
    all other cases, $v_1$ belongs to $C$ and we set $b$ to denote the rank
    within the order of $v_1$ among the (at most) eight vertices of $C$.

    We will now prove that $f_c$ is injective. Let $(L,a,v) = f_3(S,T)$ or
    $(L,a,b) = f_c(S,T)$ for some $(S,T) \in \cp(M,M')$. The support of the
    $4$-switch can be recovered: $C = M \Delta M'$. The vertex $v_1$ can also be
    recovered: if $c =3$ it is $v$, if $c \neq 3$ then it has rank $b$ within
    the order of the vertices of $C$. 

    We now prove that $v_2$ and the alternating cycles in $S \Delta T$ can be
    recovered as well. First assume that $L$ is perfect. Then $M = M_i$, so $M$
    (respectively $L$) is equal to $T$ (respectively $S$) on all vertices in
    $\bigcup_{j<i} V(C_j)$, to $S$ (respectively $T$) on all vertices in
    $\bigcup_{j\ge i} V(C_j)$ and to $S \cap T$) on all other vertices. In other
    words $L \Delta M = S \Delta T$ and we have recovered the set of alternating
    cycles. The vertex $v_2$ is matched to $v_1$ in $M$, so we can recover $v_2$
    as well. Thus we can assume that $L$ is not perfect. Let $N=L$ if $c \neq 3$
    and $N=L \setminus \{v_1v_{2k+4}\}$ if $c=3$ (this is well defined as we
    have already recovered $v_1$ and its neighbour in $L$ is
    $v_{2k+4}$). Consider $H = N\Delta (M \cap M')$, it is a collection of
    alternating paths and cycles. Note that the endpoints of the alternating
    paths of $H$ are the unmatched vertices of $N$ and $M \cap M'$, that is
    $A = V(C) \cup \{v_2,v_{2k-1}\}$ if $c \neq 3$ and
    $A = V(C) \cup \{v_2,v_{2k-1}, v_1, v_{2k+4}\}$ if $c=3$. The matching $N$
    is equal to $S$ on
    $\bigcup_{j<i} V(C_j) \cup \{v_2, \dots v_{2k-1}\} \setminus A$, to $T$ on
    $\bigcup_{j<i} V(C_j) \cup \{v_{2k+1}, \dots v_{2q}\} \setminus A$, and to
    $S \cap T$ on all remaining vertices of $V(G) \setminus A$. On the other
    hand, regardless of whether $M$ and $M'$ were representative or misleading
    in the sequence going from $S$ to $T$, they were both equal to $T$ on
    $\bigcup_{j<i} V(C_j) \cup \{v_2, \dots v_{2k-1}\} \setminus A$, to $S$ on
    $\bigcup_{j<i} V(C_j) \cup \{v_{2k+1}, \dots v_{2q}\} \setminus A$, and to
    $S \cap T$ on all remaining vertices of $V(G) \setminus A$. So all
    alternating cycles in $H$ are alternating cycles of $S \Delta T$ and the
    alternating paths in $H$ are subpaths of the alternating paths of
    $S \Delta T$. By identifying the labels of the vertices in $A$, the value of
    $c$ will then be enough information to recover how to combine the
    alternating paths in $H$ into alternating cycles of $S \Delta T$. Recall
    that $A = V(C) \cup \{v_2,v_{2k-1}\}$ if $c \neq 3$ and
    $A = V(C) \cup \{v_2,v_{2k-1}, v_1, v_{2k+4}\}$ if $c=3$. The vertices $v_2$ and $v_{2k-1}$ can be recovered
    using $a$ and the fact that $v_{2}$ and $v_{2k-1}$ are the only unmatched
    vertices of $L$. If $c \neq 3$, then the labels of $V(C)$ are fully
    determined by $v_1$, $M \cap C$ and $M'\cap C$. If $c=3$,
    the labels of $V(C)$ are fully determined by $M \cap C$, $M'\cap C$ and the
    value of $w = v_{2k} \in V(C)$.
    
    Now that we have recovered the alternating cycles in $S \Delta T$,
    their numbering can be recovered using the ordering on the vertices. $S$ is
    equal to $L$ on all cycles $C_j$ with $j < i$ and equal to $C_j \Delta L$ on
    all cycles with $j > i$ (and vice versa for $T$). Regarding $C_i$, $S$ and
    $T$ alternate on it and $S$ contains the edge $v_1v_2$, which completes the
    description of $S$ and $T$.

    By Lemma~\ref{lem:near-perfect}, the number of values that $f_c$ can take is
    $O(n^2\pi(M)^{-1})$. It follows that there are $O(n^2\pi(M)^{-1})$ canonical paths in
    $\cp_c(M,M')$ for $c\neq 3$ and $O(n^4\pi(M)^{-1})$ canonical paths in
    $\cp_c(M,M')$ for $c = 3$. If $c=3$, then the switch $M \to M'$ is a
    $2$-switch, so $\Prob(M \to M') = \Theta(1/n^2)$. On the other hand $\Prob(M
    \to M') = \Omega(1/n^4)$ for all $4$-switches. So for all $c$,
    \[\frac{|\cp_c(M,M')|}{\Prob(M,M')} = O(\pi(M)^{-1}n^6).\qedhere \]
  \end{poc}
  It follows directly that 
    $$\varrho(\Gamma) \le n \max_{M \rightarrow M'} \frac{\pi(M)|cp(M,M')|}{\Prob(M\rightarrow
      M')} = O(n^7).$$

  As the number of perfect matching of $G$ is $\pi(M)^{-1} \le n!/(n/2)!$, we have
  \[\tau_{\mix}(\Gamma) = O(\varrho(\Gamma)\log(\pi(M)^{-1})) = O(n^8\log(n)).\qedhere\]
\end{proof}

\subsection{Polynomial mixing time of the 2- and 3-switch Markov chains}\label{ssec:mixing23}

A direct consequence of Theorem~\ref{thm:3SwitchMatching} and
Theorem~\ref{thm:2SwitchMatching}, is that the $2$-switch and $3$-switch Markov chains also
have polynomial mixing times in these respective regimes, which implies
Theorem~\ref{thm:mixing23}.

%\mixing*

\begin{proof}[Proof of Theorem~\ref{thm:mixing23}]
  By Theorem~\ref{thm:3SwitchMatching}, $4$-switches can
  be realised by a sequence of length $O(1)$ of $3$-switches if
  $\delta(G)\ge n/2 +2$ (alternatively $\delta(G) \ge \lfloor n/2\rfloor+1$ if $G$ is balanced bipartite). Likewise, $3$-switches can be realised by sequences of length $O(1)$ of $2$-switches if $\delta(G)
  \ge \lfloor 2n/3\rfloor+1$). For each
  $4$-switch $(M \to M')$, define the canonical path between $M$ and $M'$ to be an
  arbitrary such sequence. The number of vertices involved in this sequence is
  bounded by a constant $c$, thus the number of canonical paths using any fixed
  $3$-switch or $2$-switch is bounded by $n^{O(1)}$. By Markov chain comparison,
  this proves polynomial mixing time for the random walk on $\mathcal H_3(G)$
  (respectively on $\mathcal H_2(G)$) in the aforementioned regimes.
\end{proof}

\section{Isolated matchings}\label{sec:isolated}
The celebrated Caccetta-Häggkvist conjecture \cite{caccetta1978Minimal} asserts
that every oriented $n$-vertex graph with minimum outdegree at least $n/k$
contains a directed cycle of length at most $k$. The weaker conjecture obtained
by replacing the minimum outdegree condition by having minimum semidegree
(minimum of outdegrees and indegrees over all vertices) at least $n/k$ is also
still open. The following two theorems show how each of these conjectures is related to the thresholds
$\delta^{\mathbf{no iso}}_k$ and $\delta^{\mathbf{no iso}}_{k,\mathrm{bip}}$ of
appearance of isolated vertices in the $k$-switch graph.
\begin{theorem}\label{thm:isolated_Caccetta}
  Let $k \ge 2$ and $n$ be an even integer. Let $d_k(n)$ be the largest integer such
  that there exists an $n$-vertex graph with minimum \emph{outdegree} $d_k(n)$ and no
  directed cycle of length at most $k$. Then
  $$\delta^{\mathbf{no iso}}_k(n) \le d_k(n)+2.$$
\end{theorem}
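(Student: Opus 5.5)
We argue by contraposition, which is exactly the content of Theorem~\ref{thm:caccettahaggkvistlink}. Suppose $G$ is an $n$-vertex graph with $\delta(G) = d$ such that $\mathcal H_k(G)$ has an isolated vertex, that is, a perfect matching $M$ admitting no $j$-switch for any $j \le k$. We will build an oriented graph $D$ on $n/2$ vertices (or on $n$ vertices after a harmless blow-up) with minimum outdegree at least $d-1$ and no directed cycle of length at most $k$. Then $d - 1 \le d_k(n)$ by definition of $d_k(n)$ (assuming $d_k$ is nondecreasing in the number of vertices, or padding suitably). Hence any $G$ with $\delta(G) \ge d_k(n)+2$ has no isolated vertex in $\mathcal H_k(G)$, which gives $\delta^{\mathbf{no iso}}_k(n) \le d_k(n)+2$.

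\textbf{Step 1: the auxiliary digraph.} For each ordered orientation of the edges of $M$, write each matching edge as $e = (a_e, b_e)$. A $j$-switch on $M$ corresponds to a cycle $a_{e_1}b_{e_1} \dots$ in $G$ alternating between $M$-edges and non-$M$-edges of length $2j$. The natural encoding is to put an arc from $e$ to $f$ whenever $b_e$ is adjacent to an endpoint of $f$. To make alternating cycles correspond to directed cycles, I would work on the vertex set $V(G)$ itself. Put an arc $u \to w$ whenever $uw' \in E(G)\setminus M$, where $w'$ is the $M$-partner of $w$. An alternating cycle $u_1 u_2 \dots u_{2j}$ with $u_{2i-1}u_{2i} \in M$ then yields the directed cycle $u_2 \to u_4 \to \dots$ of length $j$. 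Conversely, a directed cycle of length $j \le k$ through distinct matching edges gives an alternating cycle of length $2j$, i.e.\ a $j$-switch.

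\textbf{Step 2: degrees and short cycles.} Every vertex $u$ has at least $d-1$ neighbours besides its partner, so its outdegree is at least $d-1$. The remaining issues are loops and digons and repeated matching edges. A loop $u \to u$ would need $uu' \notin M$, which is impossible. A digon $u \to w \to u$ with $w \ne u'$ gives a 2-switch, hence is excluded. The pair $u \to u'$ corresponds to $uu$, which is not an edge, so it cannot occur. A directed cycle of length $\le k$ visiting both $u$ and $u'$ must be handled separately: I would shortcut it into a shorter cycle or an alternating closed walk, and argue that it still contains a genuine alternating cycle of length at most $2k$. This contradicts isolation.

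\textbf{Main obstacle.} The hardest point is exactly this last issue: ensuring that the digraph is oriented (no antiparallel arcs) and that every short directed cycle gives a \emph{simple} alternating cycle, rather than a closed walk reusing a matching edge in both directions. I would first try to fix an orientation $a_e \to b_e$ of each matching edge and keep only arcs between ``heads'' and ``tails''. This leaves an $n/2$-vertex digraph with outdegree only about $(d-1)/2$, which loses a factor.

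So the better option is to keep the $n$-vertex digraph above and prove that a shortest directed cycle of length $\le k$ never uses both $u$ and $u'$. If it did, the arcs into $u$ and $u'$ would give a chord, splitting it into two shorter closed walks, one of which is a shorter directed cycle. This contradicts minimality. Combined with the absence of digons, this makes $D$ an oriented graph with minimum outdegree $\ge d-1$ and girth $> k$, completing the contraposition.
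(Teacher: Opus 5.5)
Your digraph is the paper's $D$ with all arcs reversed, and your outdegree count and exclusion of loops and digons are correct. You have also found exactly the point the paper's proof passes over. When a directed cycle of $D$ contains both $u$ and its partner $u'$, the corresponding closed walk traverses $uu'$ twice and is not an alternating cycle; the paper simply asserts that $x_1\bar{x_1}\dots x_p\bar{x_p}$ is one. For cycles of length at most $3$ this cannot happen: $u$ and $u'$ would then be consecutive on the cycle, and there is no arc between partners. So for $k\in\{2,3\}$, in particular for Corollary~\ref{cor:caccettahaggkvist}, your argument (and the paper's) is complete.

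For $k\ge 4$, however, your proposed repair is false, and no shortcutting argument can rescue $D$ as the witness. Let $G$ be the triangles $a'bb'$ and $acc'$ joined by the edge $aa'$, with $M=\{aa',bb',cc'\}$. Since $b$ and $b'$ cannot both be matched to $a'$, every perfect matching contains $bb'$, and likewise $cc'$. So $M$ is the unique perfect matching and is isolated in $\mathcal H_k(G)$ for every $k$. In your $D$ we have $N^+(a)=\{c,c'\}$, $N^+(c)=N^+(c')=\{a'\}$, $N^+(a')=\{b,b'\}$ and $N^+(b)=N^+(b')=\{a\}$. Hence every directed cycle, e.g.\ $a\to c\to a'\to b\to a$, has length $4$ and passes through both $a$ and $a'$, and there is no chord to exploit. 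Thus $D$ has girth $4$ although $M$ is isolated. This is the blossom obstruction of non-bipartite matching: $D$ is skew-symmetric ($u\to w$ iff $w'\to u'$), and only cycles avoiding complementary pairs yield switches.

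The same graph refutes the inequality itself for $n=6$ and $k\ge 6$. Every $6$-vertex digraph with minimum outdegree $1$ has a directed cycle of length at most $6$, so $d_k(6)=0$. The theorem would then give $\delta^{\mathbf{no iso}}_k(6)\le 2$, contradicting $\delta(G)=2$. The failure is not an artefact of small $n$. Take two disjoint copies of a graph with a unique perfect matching and add a new matched pair $uv$, with $u$ complete to one copy and $v$ complete to the other. Iterating this gives graphs with a unique perfect matching and $\delta(G)=\log_2(n+2)-1$. So for $k\ge n$, where $d_k(n)+2=2$, the gap is unbounded. For $k\ge 4$ the statement therefore needs either a restriction on the parameters or a genuinely different witness than $D$; this gap is in the paper's argument as much as in yours.
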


\begin{theorem}\label{thm:isolated_Caccetta_bip}
  For all $k \ge 2$, all $d$ and all $n$, there exists an oriented $n$-vertex
  graph with minimum \emph{semidegree} $d$ and no directed cycle of length at
  most $k$ if and only if there exists a bipartite graph $G$ on $2n$ vertices
  with $\delta(G) = d+1$ such that $\mathcal H_k(G)$ has an isolated vertex.
\end{theorem}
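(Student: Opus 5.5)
The plan is to set up an explicit correspondence between a balanced bipartite graph $G$ with a distinguished perfect matching $M$ and an oriented graph $D$ on $n$ vertices. Write the bipartition of $G$ as $V_1=\{a_1,\dots,a_n\}$ and $V_2=\{b_1,\dots,b_n\}$, and suppose $M=\{a_ib_i : i\in[n]\}$. Define the digraph $D$ on $[n]$ with an arc $i\to j$ whenever $a_ib_j\in E(G)$ and $i\neq j$.

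The key dictionary is as follows. An alternating cycle with respect to $M$ of length $2\ell$ is a sequence $a_{i_1}b_{i_2}a_{i_2}b_{i_3}\cdots a_{i_\ell}b_{i_1}$, using the non-$M$ edges $a_{i_t}b_{i_{t+1}}$ and the $M$-edges $b_{i_t}a_{i_t}$. This is exactly a directed cycle $i_1\to i_2\to\dots\to i_\ell\to i_1$ of length $\ell$ in $D$. Hence $M$ is isolated in $\mathcal H_k(G)$ if and only if $D$ has no directed cycle of length at most $k$. Here $\ell=2$ corresponds to a digon, which we must exclude: a $2$-cycle in $D$ is precisely a $2$-switch in $G$.

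Degrees translate similarly. We have $\deg_G(a_i)=1+d^+_D(i)$ and $\deg_G(b_j)=1+d^-_D(j)$, the extra $1$ coming from the $M$-edge. So $\delta(G)=1+\min_i\min(d^+(i),d^-(i))$, which is one more than the minimum semidegree of $D$.

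Both directions then follow.

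($\Leftarrow$) Given a bipartite $G$ with $\delta(G)=d+1$ and an isolated perfect matching $M$, label the vertices so that $M=\{a_ib_i\}$. The resulting $D$ has no loops. It is oriented because it has no $2$-cycles, since $k\ge 2$. It has no directed cycle of length at most $k$, and its minimum semidegree is $d$.

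($\Rightarrow$) Given an oriented $D$ on $[n]$ with minimum semidegree $d$ and no directed cycle of length at most $k$, build $G$ with edges $a_ib_i$ for all $i$ and $a_ib_j$ for each arc $i\to j$. Then $G$ is balanced bipartite with $\delta(G)=d+1$. Since $D$ is oriented, $a_ib_j$ and $a_jb_i$ are distinct edges, so no multi-edge issue arises. Any other perfect matching $M'$ has $M\Delta M'$ a union of alternating cycles, each of which is a directed cycle of $D$ of length greater than $k$. So $|M\Delta M'|>2k$, and $M$ is isolated.

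The only step needing care is making the cycle correspondence exact. An $M$-alternating cycle in $G$ is automatically simple, so each index appears once, and the resulting directed cycle is a genuine simple cycle. Conversely, a simple directed cycle yields a simple alternating cycle. The other point to check is the matching of conventions: "minimum semidegree $d$" should mean exactly $d$ (or at least $d$, adjusted by deleting arcs if the statement needs equality). Deleting arcs preserves acyclicity of short cycles, so we can lower the semidegree to exactly $d$ while keeping the minimum, which makes the two versions equivalent.
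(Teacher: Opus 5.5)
Your proposal is correct and is essentially the paper's proof. It uses the same correspondence (arc $i\to j$ iff $a_ib_j\in E(G)$, with $M=\{a_ib_i\}$), the same degree shift $\deg_G(a_i)=1+d^+_D(i)$ and $\deg_G(b_j)=1+d^-_D(j)$, and the same identification of $M$-alternating $2\ell$-cycles with directed $\ell$-cycles, including using $k\ge 2$ to rule out digons.

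Your closing remark about deleting arcs to adjust the semidegree is unnecessary, since these identities already give exact equality in both directions.
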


\begin{proof}[Proof of Theorem~\ref{thm:isolated_Caccetta}]
  
  Let $G$ be an $n$-vertex graph with a
  perfect matching $M$ that is an isolated vertex of $\mathcal{H}_k(G)$. Let
  $\delta$ be the minimum degree of $G$. Write
  $V(G) = \{u_1, \dots, u_{n/2}\} \cup \{v_1, \dots, v_{n/2}\}$, where $u_i$ is
  matched to $v_i$ in $M$ for each $i \in [n/2]$. For convenience, we will
  denote by $\bar x$ the vertex to which a vertex $x$ is matched to in $M$.  Let
  $D$ be the oriented graph on the same vertex set
  $\{u_1, \dots, u_{n/2}\} \cup \{v_1, \dots, v_{n/2}\}$ defined as follows. For
  each $i \in [n/2]$, for each vertex $w \in V(D) \setminus \{u_i,v_i\}$, add an
  arc from $u_i$ to $w$ if $v_i$ and $w$ are adjacent in $G$, and symmetrically
  an arc from $v_i$ to $w$ if $u_i$ and $w$ are adjacent in $G$. In other words,
  we have $E(D) = \{\bar xw \colon xw \in E(G) \setminus E(M)\}$. Let $C = x_1\dots x_p$
  be a directed cycle of $D$, we will show that $p \ge k+1$. For each
  $i \in [p]$, $x_i\bar{x_i}$ is by definition an edge of $M$, and by
  construction, $\bar{x_i}$ and $x_{i+1 \pmod p}$ are adjacent in $G$. So
  $x_1\bar{x_1}\dots x_p\bar{x_p}$ is an alternating cycle in $G$, which
  contradicts the assumption that $M$ is an isolated vertex of
  $\mathcal{H}_k(G)$ if $p\le k$. Hence $D$ has no directed cycles of length at
  most $k$, and in particular, $D$ is oriented. The outdegree
  of each vertex $x \in V(D)$ is equal to $\deg_G(\bar x)-1$, so the digraph $D$
  has minimum outdegree equal to $\delta(G) -1$. So $\delta(G) \le d_k(n) +1$
  for all $n$-vertex graphs $G$ with an isolated matching in
  $\mathcal{H}_k(G)$. Hence $\delta^{\mathbf{no iso}}_k(n) \le d_k(n)+2$.
\end{proof}

\begin{proof}[Proof of Theorem~\ref{thm:isolated_Caccetta_bip}]
  We first prove the direct implication. Let $D$ be an oriented $n$-vertex graph with
  minimum semidegree $d$ and no directed cycle of length at most $k$. Let us
  write $V(D) = \{u_1, \dots, u_n\}$. Let $G$ be the graph on the vertex set
  $\{a_1, \dots a_n\} \cup \{b_1, \dots b_n\}$ constructed as follows. For each
  $i$, connect $a_i$ to $b_i$. For each distinct $i$ and $j$, connect $a_i$ to
  $b_j$ if $u_i \to u_j \in E(D)$. The graph $G$ has minimum degree $d+1$. Let
  $M$ be the matching $\{a_ib_i \colon i \in [n]\}$. A $k$-switch on $M$ is a
  cycle $a_{i_1}b_{i_1} \dots a_{i_\ell}b_{i_\ell}$ of length $2\ell$ for some
  $\ell \le k$. But no such cycle exists as it would imply that $u_{i_1}\dots
  u_{i_\ell}$ is a directed cycle of length at most $k$ in $D$, so $M$ is an
  isolated vertex of $\mathcal H_2(D)$.

  Conversely, let $G$ be a balanced bipartite graph on $2n$ vertices with
  $\delta(G) = d+1$ and an isolated matching $M = \{a_ib_i \colon i \in [n]\}$
  in $\mathcal H_k(G)$. Let $D$ be the oriented $n$-vertex graph constructed as
  follows. Write $V(D) = \{u_1, \dots u_n\}$ and for all distinct $i$ and $j$,
  add the arc $u_i \to u_j$ if $a_ib_j \in E(G)$. The minimum semidegree of $D$
  is $d$. A directed cycle $u_{i_1} \dots u_{i\ell}$ in $D$ corresponds to the
  cycle $a_{i_1}b_{i_1} \dots a_{i_\ell}b_{i_\ell}$. Since $M$ is isolated in
  $\mathcal H_k(G)$, such cycle has length at least $2(k+1)$, so $D$ contains no
  directed cycle of length at most $k$ (in particular $D$ is oriented because
  $k \ge 2$.
\end{proof}

In the case $k=2$, we have obtained much finer control on the threshold for the possibility of an isolated vertex in the $k$-switch graph $\mathcal H_k(G)$ both when $G$ is a general graph and when $G$ is a balanced bipartite graph.

\begin{theorem}\label{thm:isolated_k=2}
  Let $G$ be $n$-vertex graph (or respectively a balanced bipartite graph on
  $2n$ vertices). If $\delta(G) \ge n/2+1$ (respectively
  $\delta(G) \ge \lfloor (n+3)/2\rfloor$), then for every perfect matching $M$
  of $G$ and every $e \in E(M)$, there exists a $2$-switch that contains $e$.\\
  On the other hand, $\mathcal H_2(G)$ may have isolated vertices if
  $\delta(G) \le n/2 -2$ (respectively $ \lfloor (n+1)/2\rfloor$). In other words,
  $$ \frac{n}2-1 \le \delta^{\mathbf{no iso}}_2(n) \le \frac{n}2+1  \quad \text{ and } \quad
  \delta^{\mathbf{no iso}}_{2,\mathrm{bip}}(n) = \left\lfloor \frac{n+3}2 \right\rfloor.$$
\end{theorem}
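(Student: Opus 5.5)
The upper bounds come from a pigeonhole argument in the style of Lemma~\ref{lem:near-perfect}. The lower bounds come from explicit constructions: Theorem~\ref{thm:isolated_Caccetta_bip} handles the bipartite case, and a direct ``star'' construction handles the general case.

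\emph{Upper bounds.} Fix a perfect matching $M$ and an edge $e=uv\in M$. A $2$-switch containing $e$ is exactly an edge $xy\in M\setminus\{uv\}$ with $ux,vy\in E(G)$, since then $u v y x$ is an alternating $4$-cycle. In the general case, set $V'=V(G)\setminus\{u,v\}$, so $|V'|=n-2$. Let $A=N(u)\cap V'$, so $|A|\ge \deg(u)-1$. Let $B$ be the set of vertices of $V'$ matched in $M$ to a neighbour of $v$ in $V'$, so $|B|\ge\deg(v)-1$. If $\delta(G)\ge n/2+1$, then $|A|+|B|\ge n>|V'|$, so some $x$ lies in $A\cap B$, and with $y=\bar x$ we obtain the switch.

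In the bipartite case, take $u\in V_1$ and $v\in V_2$, and work in $V'=V_2\setminus\{v\}$, which has $n-1$ vertices. Here $A=N(u)\setminus\{v\}$, and $B$ is the set of vertices of $V'$ whose $M$-partner lies in $N(v)\setminus\{u\}$. Then $|A|+|B|\ge 2\delta-2$. This exceeds $n-1$ as soon as $\delta>(n+1)/2$, i.e.\ $\delta\ge\lfloor (n+3)/2\rfloor$, checking both parities of $n$.

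\emph{Bipartite lower bound.} I apply the direct implication of Theorem~\ref{thm:isolated_Caccetta_bip} with $k=2$. This needs an oriented $n$-vertex graph with minimum semidegree $d=\lfloor (n-1)/2\rfloor$ and no directed $2$-cycle, i.e.\ just an oriented graph. Take the circulant orientation $i\to i+1,\dots,i+d \pmod n$. Since $d<n/2$, no pair is joined in both directions, and every in- and out-degree equals $d$. This gives a balanced bipartite $G$ with $\delta(G)=d+1=\lfloor(n+1)/2\rfloor$ and an isolated vertex in $\mathcal H_2(G)$. Together with the upper bound, this yields the equality $\delta^{\mathbf{no iso}}_{2,\mathrm{bip}}(n)=\lfloor (n+3)/2\rfloor$.

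\emph{General lower bound.} This is the main obstacle, since here we need a construction rather than a reduction. The key observation is that $M=\{u_iv_i\}$ is isolated in $\mathcal H_2(G)$ if and only if, for every $i\ne j$, the edges of $G$ between $\{u_i,v_i\}$ and $\{u_j,v_j\}$ contain no perfect matching of this $K_{2,2}$. Equivalently, these edges form a star with at most $2$ edges. Averaging over all pairs shows $\delta\le n/2$ in any such $G$, so the construction must be nearly tight.

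The construction works on $m=n/2$ pairs. Take a tournament $T$ on $[m]$ with all out-degrees (and hence in-degrees) as equal as possible. For each arc $i\to j$, add the two edges joining a chosen ``centre'' $c\in\{u_i,v_i\}$ to both $u_j$ and $v_j$. For each $i$, alternate the choice of centre between $u_i$ and $v_i$ over its out-arcs.

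Now compute degrees. A vertex of pair $i$ gets $1$ from $M$ and $1$ from each in-arc of $i$. It also gets $2$ per out-arc for which it is the centre, so $2\lceil o/2\rceil$ or $2\lfloor o/2\rfloor$ in total, where $o$ is the out-degree of $i$. Hence every degree is at least $1+(m-1-o)+(o-1)=m-1=n/2-1$, which is at least $n/2-2$. Every $2$-switch at $M$ would need a $K_{2,2}$-perfect matching between two pairs, and there is none, so $M$ is isolated. To get an example at every prescribed $\delta\le n/2-2$, I would then delete edges, which preserves isolation. The remaining checks are the parity bookkeeping when $m$ is even (near-regular tournament) and verifying that $M$ is indeed a perfect matching of $G$. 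This gives $\delta^{\mathbf{no iso}}_2(n)\ge n/2-1$.
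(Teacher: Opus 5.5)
Your proposal is correct and follows the paper's proof. It uses the same pigeonhole count for both upper bounds, the same circulant orientation fed into Theorem~\ref{thm:isolated_Caccetta_bip} for the bipartite lower bound, and the same ``centre'' blow-up for the general lower bound (the paper's balanced split $N^+(u_i)=A_i\cup B_i$).

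The one difference is that you blow up a tournament rather than the paper's circulant $H_{n/2}$, which is not a tournament when $n/2$ is even. Your count $1+(m-1-o)+2\lfloor o/2\rfloor\ge m-1$ holds for every tournament, so near-regularity is not actually needed. It gives $\delta\ge n/2-1$ instead of the paper's $n/2-2$, and hence the slightly stronger bound $\delta^{\mathbf{no iso}}_2(n)\ge n/2$.
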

\begin{proof}
  We first prove the existence of such a switch if the minimum degree is sufficiently high. Let $M$ be a perfect matching of $G$ and $uv$ be an edge of $M$. Let $V' = V(G) \setminus \{u,v\}$, $A$ be the subset of vertices of
  $V'$ that are matched in $M$ to a neighbour of $u$, and $B = N(v) \cap V'$
  \footnote{If $G$ is a balanced bipartite graph, then let
    $V' = U \setminus \{u\}$, where $U$ is the half of the bipartition
    containing $u$, and let $A$ and $B$ be defined identically. Then $A$ and $B$ have
    size at least $\lfloor (n+1)/2\rfloor$, so $|A| + |B| \ge n > |V'|$ and by the
    pigeonhole principle, $A$ and $B$ intersect.}. We have
  $|A| \ge \deg(u) -1 \ge n/2$ and $|B| \ge n/2$. By the pigeonhole principle, $A$
  and $B$ intersect, so there exists a $2$-switch that contains $uv$.

  Conversely, we construct graphs with high minimum degree and an isolated
  matching. To do so, we first construct an auxiliary oriented graph $H_p$ on
  $p$ vertices with maximal semidegree
  $\delta(H_p) := \max_{u_i} \max \{\delta^+(u_i), \delta^-(u_i)\}$. The
  semidegree of an oriented graph on $p$ vertices is at most
  $\lfloor (p-1)/2 \rfloor$. This bound is attained by taking $H_p$ to be the
  oriented graph such that there is an arc from $u_i$ to $u_j$ is
  $j-i \in \{1, \dots, \lfloor (p-1)/2 \rfloor\} \pmod p$. By
  Theorem~\ref{thm:isolated_Caccetta}, there exists a balanced bipartite graph
  $G_n^{(\mathcal B)}$ on $2n$ vertices with minimum degree
  $\lfloor (n-1)/2 \rfloor +1 = \lfloor (n+1)/2\rfloor$, such that
  $\mathcal H_2(G_n^{(\mathcal B)})$ contains an isolated vertex.

  We now give the construction for general graphs. Let $n$ be an even integer and $p = n/2$
  and consider the graph $G_n$ on the vertex set
  $\{a_1, \dots a_{p}\} \cup \{b_1, \dots b_p\}$ built from $H_p$ as
  follows. For each $i$, connect $a_i$ with $b_i$. For each vertex
  $u_i \in V(H_p)$, we partition $N^+(u_i)$ into $A_i \cup B_i$ such that
  $||A_i|- |B_i||\le 1$. We connect $a_i$ to $a_j$ and $b_j$ for all
  $u_j \in A_i$, and $b_i$ to $a_j$ and $b_j$ for all $u_j \in B_i$. The graph
  $G_n$ constructed this way has minimum degree equal to
  $1 + 2\lfloor \delta^+(H_p)/2\rfloor + \delta^-(H_p)$. Moreover, the matching
  $M = \{a_ib_i \colon i \in [n/2]\}$ is isolated in $\mathcal H_2(G_n)$: since $H_p$ is
  an oriented graph, for each distinct $i$ and $j$, the subgraph of $G_n$
  induced by $\{a_i,b_i,a_j,b_j\}$ is isomorphic to a triangle with an edge
  attached to one of the vertices, so the edges $a_ib_i$ and $a_jb_j$ cannot be
  part of a $2$-switch. The minimum degree of $G_n$ is
  \begin{align*}
    \delta(G_n) &= 1 + 2 \left\lfloor \frac{p-1}4 \right \rfloor +  \left\lfloor
                  \frac{p-1}2 \right \rfloor
    = 2\left\lceil \frac{p}4 \right \rceil +  \left\lceil
      \frac{p}2 \right \rceil -2\\
    &= 2\left\lceil \frac{n}8 \right \rceil +  \left\lceil
      \frac{n}4 \right \rceil -2 \ge n/2 -2. \qedhere
  \end{align*}
\end{proof}

Note that the first part of Theorem~\ref{thm:isolated_k=2} implies that no edge of $M$ is frozen, and so 
$$\delta^{\mathbf{thaw}}_k(n) \le \frac{n}2+1 \quad \text{ and } \quad \delta^{\mathbf{thaw}}_{k,\mathrm{bip}}(n) \le \left\lfloor \frac{n+3}2\right\rfloor.$$

\section{Open questions and perspectives}\label{sec:open}

\subsection{Perfect matchings}
We highlight three questions about geometry in the space of perfect matchings.
What happens for regular balanced bipartite graphs? What is the threshold for
the appearance of a giant component in the $2$-switch graph? What is the threshold
guaranteeing no isolated vertices in the $k$-switch graph for $k \ge 3$?

\paragraph{Regular graphs}
For any $d$, the number $\phi(G)$ of perfect matching of any $d$-regular balanced
bipartite graph $G$ is exponential~\cite{Sch98,Voo79,Gur08,Csi17}:
\begin{align*}
  \Phi(G) \ge \left(\frac{(d-1)^{d-1}}{d^{d-2}}\right)^n.
\end{align*}
The ``volume'' heuristic suggests that for all $d$, $k$ and $n$, and any
$d$-regular balanced bipartite graph $G$ on $2n$ vertices, $\mathcal H_k(G)$ is
connected. This is false as for all $d$, there exists $d$-regular balanced
bipartite graphs with arbitrarily large girth. This is witnessed by a uniform
random $d$-regular graph, but also by blowing-up of the vertices of a
$2(k+1)$-cycle into independent sets of equal size. The resulting graph has a
disconnected $k$-switch graph (see Lemma~\ref{lem:blow-up_cycle}) and degree
$\frac{2n}{k+1}$, where $2n$ is the number of vertices.  We conjecture that this
blow-up maximises $\delta(G)$ among regular balanced bipartite graphs on $2n$
vertices with disconnected $k$-switch graph.
\begin{conjecture}
  There exists $c \ge 0$ such that for all $n$ and $k$,
  $$\delta^{\mathbf{connect}}_{k,\mathrm{reg}}(n) \in \left[ \frac{2n}{k+1} -c, \frac{2n}{k+1} +c\right].$$
\end{conjecture}
Note that Theorems~\ref{thm:2SwitchMatching} and \ref{thm:3SwitchMatching} confirm the
upper bound of this conjecture for $k=2$ and $k = 3$, and that
Corollary~\ref{cor:connectedness_lower} confirms the lower bound when $k+1$ divides
$n$. More generally, we believe that the switch graph of $d$-regular balanced
bipartite graphs on $2n$ vertices undergoes a phase transition around $2n/(k+1)$
that is similar to that described in this article for balanced bipartite graphs
and general graphs.

\paragraph{Giant component}
Theorem~\ref{thm:2SwitchMatching} and Corollary~\ref{cor:giant} place the threshold for the
appearance of a giant component in the $2$-switch graph between $n/2 - \eps$ and
$\lfloor (2n + 3)/3 \rfloor$. We conjecture that the giant
component threshold is near the lower bound.
\begin{conjecture}\label{conj:giant}
  For each $c \ge 1$, there exists $\eps \ge 0$ such that for all $n$, for all
  $n$-vertex graphs $G$ (or alternatively all balanced bipartite graphs on $2n$
  vertices) with minimum degree at least $n/2 + \eps$, the $2$-switch graph
  $\mathcal H_2(G)$ contains a component of size at least $|\mathcal H_2(G)|/c$.
\end{conjecture}

\paragraph{Isolated matchings}

For isolated matchings, we have seen in Section~\ref{sec:isolated} that the conjecture of Caccetta and Häggkvist about the maximal minimum outdegree of an oriented graph without directed cycles of length $k$ would directly imply, if true, an upper bound on the threshold $\delta^{\mathbf{no iso}}_k(n)$ at which the $k$-switch graph (if non-empty) is guaranteed to have positive minimum degree. A weaker version of this conjecture, in which outdegree is replaced by semidegree, is actually equivalent to the minimum degree threshold that guarantees no isolated vertices in the $k$-switch graphs of balanced bipartite graphs. This suggests that for all $k$ and $n$,
$\delta^{\mathbf{no iso}}_{k,\mathrm{bip}}(n) = \lceil n/k \rceil +1$. This motivates the following
conjecture for general graphs.

\begin{conjecture}\label{conj:isolated}
  There exists $c > 0$ such that for all $k$ and
  $n$, $$\delta^{\mathbf{no iso}}_k(n) \in [n/k-c, n/k +c].$$
\end{conjecture}

This conjecture holds for $k =2$ by Theorem~\ref{thm:isolated_k=2}.

\subsection{Other Dirac-type results}

One might naturally look beyond perfect matchings to other spanning structures that are guaranteed based on some Dirac-type condition.
Such structures have already been intensively studied with respect to similar ``phase transitions'' after the threshold guaranteeing existence, albeit for different phenomena, like supersaturation~\cite{komlos1995Proof}, packings~\cite{CKLOT16}, robustness~\cite{krivelevich2014Robust,allen2024Robust}, or, more recently, spread distributions~\cite{pham2022Toolkit,kelly2024Optimal,bastide2024Random}.

Our work together with a related work~\cite{kleer2020Hamiltonian} for Hamiltonian cycles point to an interesting breadth of problems for exploring such phase transitions in terms of the geometry of their respective reconfiguration spaces.
Given a spanning structure $(S_n)_{n\in \mathbb N}$,
for any $n$-vertex graph $G$ and integer $k$, let $\mathcal H_k(G)$ be the
reconfiguration graph of the embeddings of $S_n$, where two copies of $S_n$ in
$G$ are adjacent if they differ by at most $k$ edges. 
We propose the following. 
\begin{problem}
  For a given spanning structure $(S_n)_{n\in \mathbb N}$ with non-emptiness threshold
  $\delta_n$, when does there exists an integer $k = O(1)$ such that the geometry of $\mathcal H_k(G)$ undergoes a phase transition (as defined in Subsection~\ref{ssec:thresholds}) close to $\delta_n$?
\end{problem}

\subsection*{Acknowledgments}
The first author was partially supported by the grant OCENW.M20.009 of the Dutch Research Council (NWO) and the Gravitation Programme NETWORKS (024.002.003) of the Dutch Ministry of Education, Culture and Science (OCW).
The second author was supported by the National Science Center of Poland under grants 
UMO-2019/34/E/ST6/00443 and UMO-2023/05/Y/ST6/00079 within the WEAVE-UNISONO program.

\subsection*{Open access statement}
For the purpose of open access, a CC BY public copyright license is applied to any Author Accepted Manuscript (AAM) arising from this submission.

\nocite{*}
\bibliographystyle{abbrv}
\bibliography{biblio}

@article {Dir52,
    AUTHOR = {Dirac, G. A.},
     TITLE = {Some theorems on abstract graphs},
 journal = {Proc. Lond. Math. Soc. (3)},
  FJOURNAL = {Proceedings of the London Mathematical Society. Third Series},
    VOLUME = {2},
      YEAR = {1952},
     PAGES = {69--81},
      ISSN = {0024-6115},
   MRCLASS = {56.0X},
  MRNUMBER = {47308},
MRREVIEWER = {W. T. Tutte},
       DOI = {10.1112/plms/s3-2.1.69},
       URL = {https://doi.org/10.1112/plms/s3-2.1.69},
}

@article{huber2006Exact,
 author = {Huber, Mark},
 title = {Exact sampling from perfect matchings of dense regular bipartite graphs},
 fjournal = {Algorithmica},
 journal = {Algorithmica},
 issn = {0178-4617},
 volume = {44},
 number = {3},
 pages = {183--193},
 year = {2006},
 language = {English},
 doi = {10.1007/s00453-005-1175-9}
}

@incollection{diaconis2001Statistical,
 author = {Diaconis, Persi and Graham, Ronald and Holmes, Susan P.},
 title = {Statistical problems involving permutations with restricted positions},
 booktitle = {State of the art in probability and statistics. Festschrift for Willem R. van Zwet. Papers from the symposium, Leiden, Netherlands, March 23--26, 1999},
 isbn = {0-940600-50-1},
 pages = {195--222},
 year = {2001},
 publisher = {Beachwood, OH: IMS, Institute of Mathematical Statistics},
 language = {English},
 doi = {10.1214/lnms/1215090070}
}

@article{dyer2017Switch,
 author = {Dyer, Martin and Jerrum, Mark and M{\"u}ller, Haiko},
 title = {On the switch {Markov} chain for perfect matchings},
 fjournal = {Journal of the ACM},
 journal = {J. ACM},
 issn = {0004-5411},
 volume = {64},
 number = {2},
 pages = {33},
 note = {Id/No 12},
 year = {2017},
 language = {English},
 doi = {10.1145/2822322}
}

@inproceedings{broder1986Hard,
  title={How hard is it to marry at random? ({O}n the approximation of the permanent)},
  author={Broder, Andrei Z},
  booktitle={Proceedings of the 18th Annual {ACM} Symposium on Theory of Computing},
  pages={50--58},
  year={1986}
}

@article{jerrum1989Approximating,
 author = {Jerrum, Mark and Sinclair, Alistair},
 title = {Approximating the permanent},
 fjournal = {SIAM Journal on Computing},
 journal = {SIAM J. Comput.},
 issn = {0097-5397},
 volume = {18},
 number = {6},
 pages = {1149--1178},
 year = {1989},
 language = {English},
 doi = {10.1137/0218077}
}

@article{kleer2020Hamiltonian,
 author = {Kleer, Pieter and Patel, Viresh and Stroh, Fabian},
 title = {Switch-based {Markov} chains for sampling {Hamiltonian} cycles in dense graphs},
 fjournal = {The Electronic Journal of Combinatorics},
 journal = {Electron. J. Comb.},
 issn = {1077-8926},
 volume = {27},
 number = {4},
 pages = {research paper p4.29, 25},
 year = {2020},
 language = {English},
 doi = {10.37236/9503}
}

@article{feghali2016Brooks,
 author = {Feghali, Carl and Johnson, Matthew and Paulusma, Dani{\"e}l},
 title = {A reconfigurations analogue of {Brooks}' theorem and its consequences},
 fjournal = {Journal of Graph Theory},
 journal = {J. Graph Theory},
 issn = {0364-9024},
 volume = {83},
 number = {4},
 pages = {340--358},
 year = {2016},
 language = {English},
 doi = {10.1002/jgt.22000}
}

@article{bousquet2025Colorings,
 author = {Bousquet, Nicolas and Feuilloley, Laurent and Heinrich, Marc and Rabie, Mika{\"e}l},
 title = {Short and local transformations between {{\(({{\Delta}} +1)\)}}-colorings},
 fjournal = {Innovations in Graph Theory},
 journal = {Innov. Graph Theory},
 issn = {3050-743X},
 volume = {2},
 pages = {119--156},
 year = {2025},
 language = {English},
 doi = {10.5802/igt.8}
}

@article{diaconis1993Comparison,
 author = {Diaconis, Persi and Saloff-Coste, Laurent},
 title = {Comparison theorems for reversible {Markov} chains},
 fjournal = {The Annals of Applied Probability},
 journal = {Ann. Appl. Probab.},
 issn = {1050-5164},
 volume = {3},
 number = {3},
 pages = {696--730},
 year = {1993},
 language = {English},
 doi = {10.1214/aoap/1177005359}
}

@article{randall12000Analizing,
 author = {Randall, Dana and Tetali, Prasad},
 title = {Analyzing {Glauber} dynamics by comparison of {Markov} chains},
 fjournal = {Journal of Mathematical Physics},
 journal = {J. Math. Phys.},
 issn = {0022-2488},
 volume = {41},
 number = {3},
 pages = {1598--1615},
 year = {2000},
 language = {English},
 doi = {10.1063/1.533199}
}

@article {Bre73,
    AUTHOR = {Br\`egman, L. M.},
     TITLE = {Certain properties of nonnegative matrices and their
              permanents},
   JOURNAL = {Dokl. Akad. Nauk SSSR},
  FJOURNAL = {Doklady Akademii Nauk SSSR},
    VOLUME = {211},
      YEAR = {1973},
     PAGES = {27--30},
      ISSN = {0002-3264},
   MRCLASS = {15A15},
  MRNUMBER = {327788},
MRREVIEWER = {E. Seneta},
}

@article {CuKa09b,
    AUTHOR = {Cuckler, Bill and Kahn, Jeff},
     TITLE = {Hamiltonian cycles in {D}irac graphs},
   JOURNAL = {Combinatorica},
  FJOURNAL = {Combinatorica. An International Journal on Combinatorics and
              the Theory of Computing},
    VOLUME = {29},
      YEAR = {2009},
    NUMBER = {3},
     PAGES = {299--326},
      ISSN = {0209-9683},
   MRCLASS = {05C45 (05A15)},
  MRNUMBER = {2520274},
MRREVIEWER = {Shaohui Zhai},
       DOI = {10.1007/s00493-009-2360-2},
       URL = {https://doi.org/10.1007/s00493-009-2360-2},
}

@article {CuKa09a,
    AUTHOR = {Cuckler, Bill and Kahn, Jeff},
     TITLE = {Entropy bounds for perfect matchings and {H}amiltonian cycles},
   JOURNAL = {Combinatorica},
  FJOURNAL = {Combinatorica. An International Journal on Combinatorics and
              the Theory of Computing},
    VOLUME = {29},
      YEAR = {2009},
    NUMBER = {3},
     PAGES = {327--335},
      ISSN = {0209-9683},
   MRCLASS = {05C45 (05C70 94A17)},
  MRNUMBER = {2520275},
MRREVIEWER = {Shaohui Zhai},
       DOI = {10.1007/s00493-009-2366-9},
       URL = {https://doi.org/10.1007/s00493-009-2366-9},
}

@article {SSS03,
    AUTHOR = {S\'{a}rk\"{o}zy, G\'{a}bor N. and Selkow, Stanley M. and Szemer\'{e}di, Endre},
     TITLE = {On the number of {H}amiltonian cycles in {D}irac graphs},
   JOURNAL = {Discrete Math.},
  FJOURNAL = {Discrete Mathematics},
    VOLUME = {265},
      YEAR = {2003},
    NUMBER = {1-3},
     PAGES = {237--250},
      ISSN = {0012-365X},
   MRCLASS = {05-02 (05C45)},
  MRNUMBER = {1969376},
MRREVIEWER = {Martin Sonntag},
       DOI = {10.1016/S0012-365X(02)00582-4},
       URL = {https://doi.org/10.1016/S0012-365X(02)00582-4},
}

@article {BBP21,
    AUTHOR = {Bonamy, Marthe and Bousquet, Nicolas and Perarnau, Guillem},
     TITLE = {Frozen {$(\Delta + 1)$}-colourings of bounded degree graphs},
   JOURNAL = {Combin. Probab. Comput.},
  FJOURNAL = {Combinatorics, Probability and Computing},
    VOLUME = {30},
      YEAR = {2021},
    NUMBER = {3},
     PAGES = {330--343},
      comISSN = {0963-5483},
   MRCLASS = {05C15},
  MRNUMBER = {4247628},
MRREVIEWER = {Daqing Yang},
       DOI = {10.1017/s0963548320000139},
       comURL = {https://doi.org/10.1017/s0963548320000139},
}

@unpublished{BHK+,
author = {Buys, Pjotr and van den Heuvel, Jan and Kang, Ross J.},
title = {Condensation in {T}ur\'an's theorem},
note = {Manuscript in preparation},
}

@article {BKO25,
    AUTHOR = {Buys, Pjotr and Kang, Ross J. and Ozeki, Kenta},
     TITLE = {Reconfiguration of independent transversals},
   JOURNAL = {Random Structures Algorithms},
  FJOURNAL = {Random Structures \& Algorithms},
    VOLUME = {67},
      YEAR = {2025},
    NUMBER = {1},
     PAGES = {Paper No. e70025, 10},
      ISSN = {1042-9832},
   MRCLASS = {05D15},
  MRNUMBER = {4946484},
       DOI = {10.1002/rsa.70025},
       URL = {https://doi.org/10.1002/rsa.70025},
}

@ARTICLE{CCCHK25+,
       author = {{Cambie}, Stijn and {Cames van Batenburg}, Wouter and {Cranston}, Daniel W. and {van den Heuvel}, Jan and {Kang}, Ross J.},
        title = "{Reconfiguration of List Colourings}",
      journal = {arXiv e-prints},
         year = 2025,
        month = may,
          eid = {arXiv:2505.08020},
        pages = {arXiv:2505.08020},
          doi = {10.48550/arXiv.2505.08020},
archivePrefix = {arXiv},
       eprint = {2505.08020},
 primaryClass = {math.CO},
       adsurl = {https://ui.adsabs.harvard.edu/abs/2025arXiv250508020C}
}

@article {GaKa25,
    AUTHOR = {Galliano, J. and Kang, R. J.},
     TITLE = {Largest component in {B}oolean sublattices},
   JOURNAL = {Acta Math. Hungar.},
  FJOURNAL = {Acta Mathematica Hungarica},
    VOLUME = {176},
      YEAR = {2025},
    NUMBER = {1},
     PAGES = {183--214},
      ISSN = {0236-5294},
   MRCLASS = {05D05 (05C35 06A07)},
  MRNUMBER = {4936588},
       DOI = {10.1007/s10474-025-01536-0},
       URL = {https://doi.org/10.1007/s10474-025-01536-0},
}

@article {Csi17,
    AUTHOR = {Csikv\'{a}ri, P\'{e}ter},
     TITLE = {Lower matching conjecture, and a new proof of {S}chrijver's
              and {G}urvits's theorems},
   JOURNAL = {J. Eur. Math. Soc. (JEMS)},
  FJOURNAL = {Journal of the European Mathematical Society (JEMS)},
    VOLUME = {19},
      YEAR = {2017},
    NUMBER = {6},
     PAGES = {1811--1844},
      ISSN = {1435-9855},
   MRCLASS = {05C70},
  MRNUMBER = {3646876},
MRREVIEWER = {Qinglin Roger Yu},
       DOI = {10.4171/JEMS/706},
       URL = {https://doi.org/10.4171/JEMS/706},
}

@article {Gur08,
    AUTHOR = {Gurvits, Leonid},
     TITLE = {Van der {W}aerden/{S}chrijver-{V}aliant like conjectures and
              stable (aka hyperbolic) homogeneous polynomials: one theorem
              for all},
      NOTE = {With a corrigendum},
   JOURNAL = {Electron. J. Combin.},
  FJOURNAL = {Electronic Journal of Combinatorics},
    VOLUME = {15},
      YEAR = {2008},
    NUMBER = {1},
     PAGES = {Research Paper 66, 26},
   MRCLASS = {15A15 (05A15 05C70)},
  MRNUMBER = {2411443},
MRREVIEWER = {Peter M. Gibson},
       DOI = {10.37236/790},
       URL = {https://doi.org/10.37236/790},
}

@article {Sch98,
    AUTHOR = {Schrijver, Alexander},
     TITLE = {Counting {$1$}-factors in regular bipartite graphs},
   JOURNAL = {J. Combin. Theory Ser. B},
  FJOURNAL = {Journal of Combinatorial Theory. Series B},
    VOLUME = {72},
      YEAR = {1998},
    NUMBER = {1},
     PAGES = {122--135},
      ISSN = {0095-8956},
   MRCLASS = {05C70 (15A15)},
  MRNUMBER = {1604705},
MRREVIEWER = {Ko-Wei Lih},
       DOI = {10.1006/jctb.1997.1798},
       URL = {https://doi.org/10.1006/jctb.1997.1798},
}

@article {Voo79,
    AUTHOR = {Voorhoeve, M.},
     TITLE = {A lower bound for the permanents of certain
              {$(0,\,1)$}-matrices},
   JOURNAL = {Nederl. Akad. Wetensch. Indag. Math.},
  FJOURNAL = {Koninklijke Nederlandse Akademie van Wetenschappen.
              Indagationes Mathematicae},
    VOLUME = {41},
      YEAR = {1979},
    NUMBER = {1},
     PAGES = {83--86},
      ISSN = {0019-3577},
   MRCLASS = {15A15 (15A45)},
  MRNUMBER = {528221},
MRREVIEWER = {D. J. Hartfiel},
}

@article{kang2024Perfect,
 author = {Kang, Dong Yeap and Kelly, Tom and K{\"u}hn, Daniela and Osthus, Deryk and Pfenninger, Vincent},
 title = {Perfect matchings in random sparsifications of {Dirac} hypergraphs},
 fjournal = {Combinatorica},
 journal = {Combinatorica},
 issn = {0209-9683},
 volume = {44},
 number = {6},
 pages = {1233--1266},
 year = {2024},
 language = {English},
 doi = {10.1007/s00493-024-00116-0}
}

@article{achlioptas2011Solution,
 author = {Achlioptas, Dimitris and Coja-Oghlan, Amin and Ricci-Tersenghi, Federico},
 title = {On the solution-space geometry of random constraint satisfaction problems},
 fjournal = {Random Structures \& Algorithms},
 journal = {Random Struct. Algorithms},
 issn = {1042-9832},
 volume = {38},
 number = {3},
 pages = {251--268},
 year = {2011},
 language = {English},
 doi = {10.1002/rsa.20323}
}

@inproceedings{AcCo08,
  author       = {Dimitris Achlioptas and
                  Amin Coja{-}Oghlan},
  title        = {Algorithmic Barriers from Phase Transitions},
  booktitle    = {49th Annual {IEEE} Symposium on Foundations of Computer Science, {FOCS}
                  2008, October 25-28, 2008, Philadelphia, PA, {USA}},
  pages        = {793--802},
  publisher    = {{IEEE} Computer Society},
  year         = {2008},
  url          = {https://doi.org/10.1109/FOCS.2008.11},
  doi          = {10.1109/FOCS.2008.11},
  bibsource    = {dblp computer science bibliography, https://dblp.org}
}

@article{Krzakala2007Gibbs,
 author = {Krz{\k{a}}ka{\l}a, Florent and Montanari, Andrea and Ricci-Tersenghi, Federico and Semerjian, Guilhem and Zdeborov{\'a}, Lenka},
 title = {Gibbs states and the set of solutions of random constraint satisfaction problems},
 fjournal = {Proceedings of the National Academy of Sciences of the United States of America},
 journal = {Proc. Natl. Acad. Sci. USA},
 issn = {0027-8424},
 volume = {104},
 number = {25},
 pages = {10318--10323},
 year = {2007},
 language = {English},
 doi = {10.1073/pnas.0703685104}
}

@article{kelly2024Optimal,
 author = {Kelly, Tom and M{\"u}yesser, Alp and Pokrovskiy, Alexey},
 title = {Optimal spread for spanning subgraphs of {Dirac} hypergraphs},
 fjournal = {Journal of Combinatorial Theory. Series B},
 journal = {J. Comb. Theory, Ser. B},
 issn = {0095-8956},
 volume = {169},
 pages = {507--541},
 year = {2024},
 language = {English},
 doi = {10.1016/j.jctb.2024.08.006}
}

@article{krivelevich2014Robust,
 author = {Krivelevich, Michael and Lee, Choongbum and Sudakov, Benny},
 title = {Robust {Hamiltonicity} of {Dirac} graphs},
 fjournal = {Transactions of the American Mathematical Society},
 journal = {Trans. Am. Math. Soc.},
 issn = {0002-9947},
 volume = {366},
 number = {6},
 pages = {3095--3130},
 year = {2014},
 language = {English},
 doi = {10.1090/S0002-9947-2014-05963-1}
 }

@article{komlos1995Proof,
 author = {Koml{\'o}s, J{\'a}nos and S{\'a}rk{\"o}zy, G{\'a}bor N. and Szemer{\'e}di, Endre},
 title = {Proof of a packing conjecture of {Bollob{\'a}s}},
 fjournal = {Combinatorics, Probability and Computing},
 journal = {Comb. Probab. Comput.},
 issn = {0963-5483},
 volume = {4},
 number = {3},
 pages = {241--255},
 year = {1995},
 language = {English},
 doi = {10.1017/S0963548300001620}
}

@misc{pham2022Toolkit,
 author = {Pham, Huy-Tuan and Sah, Ashwin and Sawhney, Mehtaab and Simkin, Michael},
 title = {A {Toolkit} for {Robust} {Thresholds}},
 year = {2022},
 howpublished = {Preprint, {arXiv}:2210.03064 [math.{CO}] (2022)},
 url = {https://arxiv.org/abs/2210.03064},
 arXiv = {arXiv:2210.03064}
}

@misc{bastide2024Random,
 author = {Bastide, Paul and Legrand-Duchesne, Cl{\'e}ment and M{\"u}yesser, Alp},
 title = {Random embeddings of bounded degree trees with optimal spread},
 year = {2024},
 howpublished = {Preprint, {arXiv}:2409.06640 [math.{CO}] (2024)},
 url = {https://arxiv.org/abs/2409.06640},
 arXiv = {arXiv:2409.06640}
}

@incollection{sudakov2017Robustness,
 author = {Sudakov, Benny},
 title = {Robustness of graph properties},
 booktitle = {Surveys in Combinatorics 2017. Papers based on the 26th British Combinatorial Conference, University of Strathclyde, Glasgow, UK, July 2017},
 isbn = {978-1-108-41313-8; 978-1-108-33103-6},
 pages = {372--408},
 year = {2017},
 publisher = {Cambridge: Cambridge University Press},
 language = {English},
 doi = {10.1017/9781108332699.009}
 }

@article{balogh2012Corradi,
 author = {Balogh, J{\'o}zsef and Lee, Choongbum and Samotij, Wojciech},
 title = {Corr{\'a}di and {Hajnal}'s theorem for sparse random graphs},
 fjournal = {Combinatorics, Probability and Computing},
 journal = {Comb. Probab. Comput.},
 issn = {0963-5483},
 volume = {21},
 number = {1-2},
 pages = {23--55},
 year = {2012},
 language = {English},
 doi = {10.1017/S0963548311000642}
}

@article{allen2024Robust,
 author = {Allen, Peter and B{\"o}ttcher, Julia and Corsten, Jan and Davies, Ewan and Jenssen, Matthew and Morris, Patrick and Roberts, Barnaby and Skokan, Jozef},
 title = {A robust {Corr{\'a}di}-{Hajnal} theorem},
 fjournal = {Random Structures \& Algorithms},
 journal = {Random Struct. Algorithms},
 issn = {1042-9832},
 volume = {65},
 number = {1},
 pages = {61--130},
 year = {2024},
 language = {English},
 doi = {10.1002/rsa.21209}
 }

@article{valiant1979Complexity,
 author = {Valiant, L. G.},
 title = {The complexity of computing the permanent},
 fjournal = {Theoretical Computer Science},
 journal = {Theor. Comput. Sci.},
 issn = {0304-3975},
 volume = {8},
 pages = {189--201},
 year = {1979},
 language = {English},
 doi = {10.1016/0304-3975(79)90044-6}
}

@inproceedings{caccetta1978Minimal,
 author = {Caccetta, Louis and H{\"a}ggkvist, R.},
 title = {On minimal digraphs with given girth},
 year = {1978},
 booktitle = {Proc. 9th Southeast. {Conf}. on {Combinatorics}, Graph Theory, and Computing, {Boca} {Raton}.},
 pages = {181--187},
}

@article {BCW70,
    AUTHOR = {Behzad, Mehdi and Chartrand, Gary and Wall, Curtiss E.},
     TITLE = {On minimal regular digraphs with given girth},
   JOURNAL = {Fund. Math.},
  FJOURNAL = {Polska Akademia Nauk. Fundamenta Mathematicae},
    VOLUME = {69},
      YEAR = {1970},
     PAGES = {227--231},
      ISSN = {0016-2736,1730-6329},
   MRCLASS = {05.60},
  MRNUMBER = {285448},
MRREVIEWER = {Neil\ Robertson},
       DOI = {10.4064/fm-69-3-227-231},
       URL = {https://doi.org/10.4064/fm-69-3-227-231},
}

@article{hladky2017Counting,
 author = {Hladk{\'y}, Jan and Kr{\'a}l, Daniel and Norin, Sergey},
 title = {Counting flags in triangle-free digraphs},
 fjournal = {Combinatorica},
 journal = {Combinatorica},
 issn = {0209-9683},
 volume = {37},
 number = {1},
 pages = {49--76},
 year = {2017},
 language = {English},
 doi = {10.1007/s00493-015-2662-5}
}

@book{levin2017Markov,
 author = {Levin, David A. and Peres, Yuval and Wilmer, Elizabeth L.},
 title = {Markov chains and mixing times. {With} a chapter on ``{Coupling} from the past'' by {James} {G}. {Propp} and {David} {B}. {Wilson}.},
 edition = {2nd edition},
 isbn = {978-1-4704-2962-1; 978-1-4704-4232-3},
 year = {2017},
 publisher = {Providence, RI: American Mathematical Society (AMS)},
 language = {English}
}

@article {MWW04,
    AUTHOR = {McKay, Brendan D. and Wormald, Nicholas C. and Wysocka, Beata},
     TITLE = {Short cycles in random regular graphs},
   JOURNAL = {Electron. J. Combin.},
  FJOURNAL = {Electronic Journal of Combinatorics},
    VOLUME = {11},
      YEAR = {2004},
    NUMBER = {1},
     PAGES = {Research Paper 66, 12},
      ISSN = {1077-8926},
   MRCLASS = {05C80 (05C38)},
  MRNUMBER = {2097332},
MRREVIEWER = {Bert\ Fristedt},
       DOI = {10.37236/1819},
       URL = {https://doi.org/10.37236/1819},
}

@book{CKLOT16,
 author = {Csaba, B{\'e}la and K{\"u}hn, Daniela and Lo, Allan and Osthus, Deryk and Treglown, Andrew},
 title = {Proof of the 1-factorization and {Hamilton} decomposition conjectures},
 fseries = {Memoirs of the American Mathematical Society},
 series = {Mem. Am. Math. Soc.},
 issn = {0065-9266},
 volume = {1154},
 isbn = {978-1-4704-2025-3; 978-1-4704-3508-0},
 year = {2016},
 publisher = {Providence, RI: American Mathematical Society (AMS)},
 language = {English},
 doi = {10.1090/memo/1154},
 zbMATH = {6751812},
 Zbl = {1367.05165}
}

\end{document}